\newif\ifdviwin
\numberwithin{equation}{section}
\theoremstyle{plain}
\newtheorem{theorem}{Theorem}[section]
\newtheorem*{Main Theorem}{Main Theorem}
\newtheorem{proposition}[theorem]{Proposition}
\newtheorem{lemma}[theorem]{Lemma}
\newtheorem{corollary}[theorem]{Corollary}
\theoremstyle{definition}
\newtheorem{notation}[theorem]{Notation}
\newtheorem{remark}[theorem]{Remark}
\newtheorem{definition}[theorem]{Definition}
\newtheorem{example}[theorem]{Example}
\newtheorem{question}[theorem]{Question}
  \newcounter{numlist} %
  {\end{list}}%
\theoremstyle{remark}
\newtheorem*{Claim}{Claim }
\newtheorem*{Claim1}{Claim 1}
\newtheorem*{Claim2}{Claim 2}
\newtheorem*{case1}{Case 1}
\newtheorem*{case2}{Case 2}
\newtheorem*{case3}{Case 3}
\newtheorem{chunk}[theorem]{}
\numberwithin{equation}{theorem}
\newcommand{\pd}{\mathrm{pd}}
\newcommand{\lcm}{\mathrm{lcm}}
\newcommand{\LCM}{\mathrm{LCM}}
\newcommand{\Tor}{\mathrm{Tor}}
\newcommand{\im}{\mathrm{im}}
\newcommand{\Taylor}{\mathrm{Taylor}}
\newcommand{\Scarf}{\mathrm{Scarf}}
\newcommand{\Supp}{\mathrm{Supp}}
\newcommand{\ba}{\mathbf{a}}
\newcommand{\bb}{\mathbf{b}}
\newcommand{\bc}{\mathbf{c}}
\newcommand{\bd}{\mathbf{d}}
\newcommand{\bm}{\mathbf{m}}
\newcommand{\f}{\mathbf{f}}
\newcommand{\m}{\mathbf{m}}
\newcommand{\bma}{\m^\ba}
\newcommand{\sfk}{\mathsf k}
\newcommand{\A}{\mathcal{A}}
\newcommand{\C}{\mathcal{C}}
\newcommand{\E}{\mathcal{E}}
\newcommand{\M}{\mathcal{M}}
\newcommand{\N}{\mathcal{N}}
\newcommand{\U}{\mathcal{U}}
\newcommand{\MS}{\mathcal{S}}
\newcommand{\J}{\mathcal{J}}
\newcommand{\X}{\mathcal{X}}
\newcommand{\LL}{\mathbb{L}}
\newcommand{\NN}{\mathbb{N}}
\newcommand{\RR}{\mathbb{R}}
\newcommand{\ZZ}{\mathbb{Z}}
\newcommand{\TT}{\mathbb{T}}
\renewcommand{\SS}{\mathbb{S}}
\newcommand{\UU}{\mathbb{U}}
\newcommand{\Ta}{\TT_\ba}
\newcommand{\Lrq}{\LL^r_q}
\newcommand{\UUrq}{\UU^r_q}
\newcommand{\Erq}{{\E_q}^r}
\newcommand{\Urq}{\U^r_q}
\newcommand{\Srq}{\SS^r_q}  
\newcommand{\Trq}{\TT^r_q}  
\newcommand{\Nrq}{\N^r_q}
\newcommand{\Ua}{\pmea \U_q^{r-|\ba|}}
\newcommand{\Ub}{\pmeb \U_q^{r-|\bb|}}
\newcommand{\Uc}{\pmec \U_q^{r-|\bc|}}
\renewcommand{\Ua}{\U_{\ba}}
\renewcommand{\Ub}{\U_{\bb}}
\renewcommand{\Uc}{\U_{\bc}}
\newcommand{\SE}{{S_\E}}
\newcommand{\e}{\epsilon}
\newcommand{\pme}{{\pmb{\e}}}
\newcommand{\pmea}{\pme^{\ba}}
\newcommand{\pmeb}{\pme^{\bb}}
\newcommand{\pmec}{\pme^{\bc}}
\newcommand{\pmed}{\pme^{\bd}}
\newcommand{\pmeaa}{\pme^{{\ba}'}}
\newcommand{\pmebb}{\pme^{{\bb}'}}
\newcommand{\pmecc}{\pme^{{\bc}'}}
\newcommand{\egcd}{\e\text{-}\mathrm{gcd}}
\newcommand{\qand}{\quad \mbox{and} \quad }
\newcommand{\qor}{\quad \mbox{or} \quad }
\newcommand{\qif}{\quad \mbox{if} \quad }
\newcommand{\qfor}{\quad \mbox{for} \quad }
\newcommand{\qwhere}{\quad \mbox{where} \quad }
\newcommand{\qwith}{\quad \mbox{with} \quad }
\newcommand{\qwhen}{\quad \mbox{when} \quad }
\newcommand{\qforeach}{\quad \mbox{for each} }
\newcommand{\qforsome}{\quad \mbox{for some} \quad }
\newcommand{\qforall}{\quad \mbox{for all} \quad }
\newcommand{\sm}{\setminus}
\newcommand{\ssm}{\smallsetminus}
\newcommand{\st}{\colon}
\begin{document}
\bibliographystyle{amsplain}

\author[S.~El Khoury]{Sabine El Khoury}
\address{Department of Mathematics,
American University of Beirut,
Bliss Hall 315, P.O. Box 11-0236,  Beirut 1107-2020,
Lebanon}
\email{se24@aub.edu.lb}

\author[S.~Faridi]{Sara Faridi}
\address{Department of Mathematics \& Statistics\\
Dalhousie University\\
6316 Coburg Rd.\\
PO BOX 15000\\
Halifax, NS\\
Canada B3H 4R2}
\email{faridi@dal.ca}

\author[L.~M.~\c{S}ega]{Liana M.~\c{S}ega}
\address{Liana M.~\c{S}ega\\ Division of Computing, Analytics and Mathematics\\School of Science and Engineering\\
University of Missouri-Kansas City\\ MO 64110\\ U.S.A.}
\email{segal@umkc.edu}

\author[S.~Spiroff]{Sandra Spiroff }
\address{Department of Mathematics,
University of Mississippi,
Hume Hall 335, P.O. Box 1848, University, MS 38677
USA}
\email{spiroff@olemiss.edu}

\keywords{powers of ideals; extremal ideals; Scarf complexes; discrete
  Morse theory; free resolutions; simplicial complex; betti numbers;
  monomial ideals}

\subjclass[2010]{13D02; 13F55}

\title[Powers of extremal ideals]{The Scarf complex and betti numbers of \\
powers of extremal ideals}

\begin{abstract} 
  This paper is concerned with finding bounds on betti numbers and
  describing combinatorially and topologically (minimal) free
  resolutions of powers of ideals generated by a fixed number $q$ of
  square-free monomials. Among such ideals, we focus on a specific
  ideal $\E_q$, which we call {\it extremal}, and which has the
  property that for each $r\ge 1$ the betti numbers of $\Erq$ are an
  upper bound for the betti numbers of $I^r$ for any ideal $I$
  generated by $q$ square-free monomials (in any number of variables).
  We study the Scarf complex of the ideals $\Erq$ and use this
  simplicial complex to extract information on minimal free
  resolutions.  In particular, we show that $\Erq$ has a minimal free
  resolution supported on its Scarf complex when $q\le 4$ or when
  $r\le 2$, and we describe explicitly this complex. For any $q$ and
  $r$, we also show that $\beta_1(\Erq)$ is the smallest possible, or
  in other words equal to the number of edges in the Scarf complex.
  These results lead to effective bounds on the betti numbers of
  $I^r$, with $I$ as above.  For example, we obtain that $\pd(I^r)\le
  5$ for all ideals $I$ generated by $4$ square-free
  monomials and any $r\ge 1$.
\end{abstract} 
 
 \maketitle

\section{{\bf \large  Introduction}}

Understanding and bounding free resolutions of ideals is a central
problem in commutative algebra, and one that has led to the creation
of a broad set of tools, many in combinatorics, to better understand
them. Powers of a fixed ideal and their resolutions, though indispensable in
many areas of commutative algebra and algebraic geometry, are much
less understood.
 
Extremal ideals, introduced in~\cite{Lr}, were designed so that their
powers would have the largest minimal free resolutions among powers of
all square-free monomial ideals. As a result, many of the homological
invariants of their powers provide effective bounds for the same
invariants of powers of any square-free monomial ideal.

More precisely, an extremal ideal $\E_q$ has $q$ square-free monomial
generators, and if $I$ is {\it any} ideal in a polynomial ring, 
minimally generated by $q$ square-free monomials, then we proved, with
our coauthors in~\cite[Theorem 7.9]{Lr}, that 
\begin{equation}\label{betti}
\beta_i(I^r) \le \beta_i(\Erq)
\qforall r\geq 1.
\end{equation}
Thus, the problem of finding an effective upper bound for the betti
numbers of $I^r$ can be reduced to finding the betti numbers of the
ideals $\Erq$. However, despite the combinatorial construction of
extremal ideals, understanding the structure of the minimal free
resolution of their powers has proved to be a challenging problem.
This paper takes the first steps in this direction.

To provide some context to our work, we give a summary of some of the
known bounds.  The largest minimal free resolution for a monomial
ideal $I$ is the {\it Taylor resolution}~\cite{T}, which is built on the
chain complex of a simplex. Based on the generators of the ideal, one
can build a subcomplex of the Taylor complex called the {\it Scarf
  complex}~\cite{BPS}, whose chain complex is included in the
minimal free resolution of $I$. In other words,
\begin{equation}\label{e:guide} \mbox{Scarf (chain) complex of }I \subseteq
  \mbox{ Minimal free resolution of } I \subseteq
  \mbox{ Taylor (chain) complex of }I. 
\end{equation}
Note that these bounds are sharp: there are monomial ideals whose
minimal free resolutions are Scarf or Taylor. The Taylor resolution,
whose size only depends on the number of generators $q$, therefore
provides effective binomial upper bounds for betti numbers of monomial
ideals. If $I$ is generated by $q$ monomials, this bound is
$$ \beta_i(I)\le \binom{q}{i+1}\qquad\text{for all \, $i$ \, with \,\,
  $0\le i\le q-1$}.
$$

But once powers are taken, the Taylor resolution is never minimal
when $q>1$, and as the powers grow, the binomial bound becomes
unreasonably large. This is where extremal ideals come in: for
integers $r> 1$ \eqref{e:guide} can be refined (and abbreviated) as
\begin{equation}\label{e:guide-2} \Scarf(I^r) \subseteq
  \mbox{ \begin{tabular}{c}Minimal free \\ resolution of  $I^r$
         \end{tabular}}\subseteq
  \mbox{ \begin{tabular}{c}Minimal free \\ resolution of  $\Erq$
         \end{tabular}}
  \subseteq
  \Taylor(\Erq)
\end{equation}
where the last inclusion is strict when $r,q >1$, and
  the middle inclusion should be understood in a sense made more
  precise in \cref{t:upperbound}, where we also show that if a simplicial
  chain complex gives a resolution of $\Erq$, then it also gives a
  resolution of $I^r$.

The statement in \eqref{e:guide-2} leads to the following
questions. {\it Is there a combinatorial description of the minimal
  free resolution of $\Erq$? Can it be described as the chain complex
  of a simplicial complex?}

Most importantly, computational evidence points to the following
questions.
\begin{center}{\it Do the ideals $\Erq$ have Scarf resolutions (i.e. the
    smallest possible)? \\ \medskip Does the Scarf complex of $\Erq$ have a nice
  combinatorial description?}
\end{center}

This first of these two questions is the premise of the current paper, and we provide
evidence towards an affirmative answer. Our focus is finding faces of
the Scarf complex of $\Erq$, which we denote by $\Srq$. If $\Erq$ is indeed
Scarf, the complex $\Srq$ can be considered as the natural $r$-th
power of the Taylor complex on $q$ generators.

The ideal $\E_q$ itself (the case $r=1$) is essentially a {\it nearly Scarf
  ideal} (see~\cite{PV}) corresponding to a $q$-simplex, and hence has
a Scarf resolution. In fact, for $\E_q$ the Scarf and Taylor
resolutions coincide.

In this paper we prove that the ideal ${\E_q}^2$ is also Scarf for any
$q$, as are ${\E_q}^r$ for any $q \leq 4$ and $r \geq 1$ (\cref{c:r=2-Scarf}, \cref{t:Morse-small-q}).  We give
precise combinatorial formulas for the betti numbers of $\Erq$ in
these cases (\cref{upper bounds}, \cref{t:L2r}).  Most of
the input for the work in the case $r=2$ comes from \cite{L2, Lr},
while the case $q\le 4$ brings new techniques.

We also prove that the first betti numbers of $\Erq$ are always Scarf
for any $r,q \geq 1$: in \cref{t:first-betti} we
prove that $\beta_1(\Erq)$ is equal to the number of $1$-dimensional
faces of $\Srq$. When $r=3$, we provide in \cref{t:betti-1} a combinatorial count of such
faces, and thus a formula for $\beta_1({\E_q}^3)$ for any $q$.

Most of this paper is written with a view towards understanding the
Scarf complex $\Srq$. We give a characterization of the faces of
$\Srq$ in terms of polyhedral geometry, and we identify a large number
of facets of $\Srq$ via a subcomplex $\UUrq$ which we introduce in
\cref{d:Ur}. The cases mentioned above where $\Erq$ is a Scarf ideal
coincide with situations when $\Srq=\UUrq$.  While in general we know
that $\Srq$ has more faces than those in $\UUrq$, the simplicial
complex $\UUrq$ allows us to characterize a large number of
multigraded betti numbers of $\Erq$ and provide general lower bounds
for the total betti numbers of $\Erq$ (\cref{c:scarf-betti} and
\cref{t:f-vector}).

  As a step towards showing that $\Srq$ supports a resolution
  of $\Erq$ - in the sense that its simplicial chain complex can be
  homogenized into a free resolution of $\Erq$ - in \cref{t:german} we
  use discrete Morse theory to show that for each facet $U$ of $\Srq$
  which comes from $\UUrq$, one can construct a CW complex supporting a
  resolution of $\Erq$ by eliminating all faces containing $U$.

In view of \eqref{betti}, the computations of the betti numbers
described above also provide effective bounds for the betti numbers of
$I^r$, where $I$ is any ideal generated by $q$ square-free
monomials. While these bounds are too lengthy to be displayed here, we
mention two bounds on projective dimension that come from \cref{upper
  bounds}: If $I$ is any monomial ideal generated by $q$ square-free
monomials, then for any $r\ge 1$,
 $$\pd(I^r)\le 3\qwhen q=3 \qand \pd(I^r) \le 5 \qwhen q=4.$$

Extremal ideals were introduced during our collaborative work~\cite{Lr} with Susan
Cooper, Susan Morey, and Sarah Mayes-Tang.  The BIRS2023 \say{Powers
  of a Simplex}~\cite{juniper} working group have since started
working on developing a proper definition of $\Srq$ using the tools of
discrete geometry and discrete topology.

\section{{\bf \large  Basic Definitions}}
\label{s:basic-definitions}

In this section, basic definitions and background are provided.
Throughout the section, let $\sfk$ be a field, let $S$ be the
polynomial ring $\sfk[x_1, \dots, x_n]$, and let $I = (m_1, \dots,
m_q)$ be an ideal minimally generated by $q$ square-free monomials in
$S$.

\subsection{Simplicial complexes and resolutions}
Given a vertex set $V$, a {\bf simplicial complex} $\Delta$ on a
vertex set $V$ is a set of subsets of $V$ satisfying the following
property: if $\sigma \in \Delta$ and $\tau \subseteq \sigma$, then
$\tau \in \Delta$. An element $\sigma$ of $\Delta$ is called a {\bf
  face}, and a maximal face of $\Delta$ (under inclusion) is called a
{\bf facet}. Since a simplicial complex $\Delta$ can be uniquely
determined by its facets $\sigma_1,\ldots,\sigma_q$, one writes
$\Delta=\langle \sigma_1,\ldots,\sigma_q\rangle$. The {\bf dimension}
of a face $\sigma$ is defined as $\dim(\sigma)=|\sigma|-1$, where the
vertical bars denote cardinality.  If a simplicial complex has only
one facet, then it is called a {\bf simplex}.  In particular, the {\bf
  Taylor complex} $\Taylor(I)$ of $I$ is a simplex on $q$ vertices,
each of which is labeled by a monomial generator of $I$, and each face
is labeled with the lcm of the monomial labels of its generators.

More generally, let $\Delta$ be a simplicial complex on $q$ vertices
$v_1,\ldots,v_q$, and label each vertex $v_i$ with the monomial $m_i$, corresponding to the minimal generators of the ideal $I$.
Label each face of $\Delta$ with the least common multiple of the labels of its
vertices. More precisely, if $\sigma\in \Delta$, then the monomial label of $\sigma$ is 
$$
\m_\sigma=\lcm(m_i\st i\in \sigma).
$$
  
An exact
sequence $F_\bullet$ of free $S$-modules is a {\bf graded free resolution of $I$} if it has the form:
\begin{equation} \label{graded free}
 0 \to F_d \stackrel{\partial_d}{\longrightarrow} \cdots \to F_i
\stackrel{\partial_i}{\longrightarrow} F_{i-1} \to\cdots \to F_1
\stackrel{\partial_1}{\longrightarrow} F_0
\end{equation}
 where $I \cong
F_0/\im(\partial_1)$, and each map $\partial_i$ is graded, in the
sense that it preserves the degrees of homogeneous elements.
The free resolution in \eqref{graded free} is called {\bf minimal} if $\partial_i(F_i) \subseteq (x_1,\ldots,x_n) F_{i-1}$ for every
$i>0$.  To further refine the grading on $F_i$, each free module can be written as direct sum of one dimensional free $S$-modules of the form $S(m)$, indexed by the monomials $m$. Denote by $\LCM(I)$ the  poset consisting of the lcm's of the generating set of $I$ ordered by divisibility; i.e., the lcm-lattice of $I$. Thus, when \eqref{graded free} is minimal, one has
$$F_i \cong \bigoplus_{m \in \LCM(I)} S(m)^{\beta_{i,m}}$$ where the
$\beta_{i,m}$ are the {\bf multigraded betti numbers} of $I$, which
are invariants of $I$.  The {\bf projective dimension}, denoted by
$\pd_R(I)$, is the length of a minimal free resolution of $I$, which
is $d$ in the case of $F_\bullet$ in \eqref{graded free}, with $F_d\ne 0$. 

Given a simplicial complex $\Delta$ with $q$ vertices labelled with
the monomial generators of $I$, the simplicial chain complex of
$\Delta$ can be ``homogenized'' using the monomial labels on the faces
to give a graded complex of free $S$-modules. For details on the
homogenization construction, see \cite{PV}. Let $F^\Delta_\bullet$
denote the homogenized complex obtained from $\Delta$. If
$F^\Delta_\bullet$ is a free resolution of $I$, $\Delta$ is said to
{\bf support a free resolution} of $I$ and the resulting free
resolution is called a {\bf simplicial resolution} of $I$.

The {\bf Scarf complex} of $I$, denoted by $\Scarf(I)$, is a
subcomplex of $\Taylor(I)$ obtained by removing all faces of
$\Taylor(I)$ that share a monomial label with another face. The
relevance of Scarf complexes comes from the fact that a minimal free
resolution of $I$ contains an isomorphic copy of
$F^{\Scarf(I)}_\bullet$ and, in particular, for any face $\sigma\in
\Scarf(I)$ the minimal free resolution of $I$ has a unique generator
with multidegree equal to that of the monomial label of $\sigma$.  An
ideal $I$ is said to be a {\bf Scarf ideal} if $\Scarf(I)$ supports a
free resolution of $I$ (which is necessarily minimal).

The next lemma will be useful in our arguments involving Scarf complexes. 

\begin{lemma}\label{l:expansion} Let $I$ be a monomial ideal with
  Taylor complex $\TT$ and Scarf complex $\SS$, and let $\sigma \in
  \TT$. Then   $\sigma\in \SS$ if and only if both of the following statements hold: 
\begin{enumerate}
\item $\m_\sigma\ne \m_{\sigma\ssm \{v\}}$ for all vertices $v\in \sigma$; 
\item  $\m_{\sigma\cup\{v\}}\ne \m_\sigma$ for all vertices $v\in \TT\ssm\sigma$. 
\end{enumerate}
 In particular, if $\sigma $ is an edge (that is, $\dim(\sigma)=1$), then $\sigma\in \SS$ if and only if  (2) holds. 
\end{lemma}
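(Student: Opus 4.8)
The plan is to read off the defining property of the Scarf complex directly: by construction, a face $\sigma \in \TT$ belongs to $\SS$ if and only if $\m_\sigma \ne \m_\tau$ for every face $\tau \in \TT$ with $\tau \ne \sigma$. Granting this, the forward implication is immediate --- if $\sigma \in \SS$, then specializing $\tau$ to $\sigma \ssm \{v\}$ for each $v \in \sigma$ gives condition (1), and specializing $\tau$ to $\sigma \cup \{v\}$ for each vertex $v \in \TT \ssm \sigma$ gives condition (2).

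For the reverse implication I would argue by contradiction: assume (1) and (2) hold, but suppose there exists a face $\tau \in \TT$ with $\tau \ne \sigma$ and $\m_\tau = \m_\sigma$. I split into two cases according to whether $\tau \subseteq \sigma$. If $\tau \subsetneq \sigma$, pick $v \in \sigma \ssm \tau$; then $v \notin \tau$, so $\tau \subseteq \sigma \ssm \{v\} \subsetneq \sigma$, which yields the divisibility chain $\m_\tau \mid \m_{\sigma \ssm \{v\}} \mid \m_\sigma = \m_\tau$, hence $\m_{\sigma \ssm \{v\}} = \m_\sigma$, contradicting (1). If $\tau \not\subseteq \sigma$ (which also covers the case $\tau \supsetneq \sigma$), pick $v \in \tau \ssm \sigma$; since $\{v\} \subseteq \tau$ we have $m_v \mid \m_\tau = \m_\sigma$, and therefore $\m_{\sigma \cup \{v\}} = \lcm(\m_\sigma, m_v) = \m_\sigma$, contradicting (2). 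In both cases we reach a contradiction, so no such $\tau$ exists and $\sigma \in \SS$.

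For the final (``in particular'') statement it is enough to observe that condition (1) holds automatically as soon as $\sigma$ is an edge, so that the criterion collapses to (2) alone. Writing $\sigma = \{u, w\}$, condition (1) reads $\m_\sigma \ne m_u$ and $\m_\sigma \ne m_w$; but $\m_\sigma = m_u$ would mean $\lcm(m_u, m_w) = m_u$, i.e. $m_w \mid m_u$, contradicting the fact that $m_1, \dots, m_q$ is a \emph{minimal} generating set of $I$ (the standing assumption of this section), and symmetrically for $w$. Thus for an edge $\sigma$ we have $\sigma \in \SS$ if and only if (2) holds, as claimed.

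I do not expect a genuine obstacle here; the argument is essentially an unwinding of the definition of the Scarf complex together with the monotonicity of the lcm labels under inclusion of faces. The single step that calls for a small idea rather than bookkeeping is the case in which $\tau$ and $\sigma$ are incomparable: there one uses that \emph{every} vertex label of $\tau$ divides $\m_\tau$, which --- via $\m_\tau = \m_\sigma$ --- is precisely what allows one to enlarge $\sigma$ by such a vertex without changing its label, and so contradict (2).
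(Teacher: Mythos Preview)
Your argument is correct and matches the paper's proof essentially step for step: the same case split on whether $\tau\subsetneq\sigma$ or $\tau\not\subseteq\sigma$, the same choice of vertex $v$ in each case, and the same observation that for an edge condition~(1) is automatic by minimality of the generating set. The only cosmetic difference is that you phrase the reverse implication as a proof by contradiction while the paper phrases it as a contrapositive.
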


 \begin{proof} If $\sigma\in \SS$, then (1) and (2) clearly hold. 
Assume now that $\sigma\notin \SS$. We show that (1) or (2) fails to hold.
Since $\sigma\notin \SS$, there exists a face $\tau \in \TT$, such that $\tau \neq \sigma$
    and $\m_\sigma=\m_\tau$.

    If $\tau \subsetneq \sigma$, then there
    exists $v \in \sigma \ssm \tau$.  So $$\tau \subseteq \sigma \ssm
    \{v\} \subset \sigma \qand \m_\tau=\m_\sigma \quad \Longrightarrow \quad 
    \m_\tau=\m_{\sigma\ssm\{v\}}=\m_\sigma.$$

    If $\tau \not \subseteq \sigma$, then there exists  $v \in \tau$
    with $v \notin \sigma$. The fact that $\m_\sigma=\m_\tau$ now
    guarantees that $$\m_{\sigma\cup \{v\}}=\m_\sigma.$$

Thus (1) or (2) do not hold.  Lastly, when $\dim(\sigma)=1$,
$\sigma=\{m_1,m_2\}$ where $m_1$ and $m_2$ are minimal monomial
generators of $I$, and so $\m_\sigma \neq m_1$ and $\m_\sigma \neq
m_2$. Thus (1) holds trivially, and one only needs to check (2) to
establish $\sigma\in \SS$.
    \end{proof}

The next criterion can be used to prove that the Scarf complex supports a minimal free resolution. 

\begin{lemma}{\cite[Lemma 3.1]{BPS}}
\label{l:BPS}
If $I$ is a monomial ideal for which the nonzero betti numbers are concentrated in the multidegrees of the monomial labels of $\Scarf(I)$, then $\Scarf(I)$ supports a minimal free resolution of $I$. 
\end{lemma}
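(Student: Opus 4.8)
The plan is to compute every multigraded Betti number $\beta_{i,\m}(I)$ under the stated hypothesis, observe that the resulting list agrees exactly with the one produced by $F^{\Scarf(I)}_\bullet$, and then promote this numerical coincidence to an isomorphism of complexes using the fact recalled above: that a minimal free resolution of $I$ contains an isomorphic copy of $F^{\Scarf(I)}_\bullet$ as a subcomplex that is split in each homological degree.

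Since $\beta_{i,\m}(I)=\dim_\sfk\Tor_i^S(I,\sfk)_\m$ can be computed from any free resolution of $I$, I would use the Taylor resolution, so that $\beta_{i,\m}(I)=\dim_\sfk H_i\big((F^{\Taylor(I)}_\bullet\otimes_S\sfk)_\m\big)$. For a nonempty $\sigma\subseteq\{1,\dots,q\}$ the basis element $e_\sigma$ of $F^{\Taylor(I)}_\bullet$ lies in homological degree $\dim\sigma$ and multidegree $\m_\sigma$, and each coefficient of the Taylor differential is a monomial $\m_\sigma/\m_{\sigma\ssm\{v\}}$, which becomes a unit modulo $(x_1,\dots,x_n)$ precisely when $\m_{\sigma\ssm\{v\}}=\m_\sigma$. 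Hence, writing $A_\m=\{\sigma\in\Taylor(I)\st\m_\sigma=\m\}$, the multidegree-$\m$ strand $(F^{\Taylor(I)}_\bullet\otimes_S\sfk)_\m$ is the $\sfk$-complex with basis $\{e_\sigma\st\sigma\in A_\m\}$ and differential $e_\sigma\mapsto\sum_{v\in\sigma,\;\sigma\ssm\{v\}\in A_\m}\pm\,e_{\sigma\ssm\{v\}}$, so $\beta_{i,\m}(I)$ is the dimension of the $i$-th homology of this small, completely explicit complex.

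Next I would split into two cases according to whether $\m$ is the label of a face of $\Scarf(I)$. If $\m=\m_\sigma$ for some $\sigma\in\Scarf(I)$, then by the definition of the Scarf complex $\sigma$ is the only face of $\Taylor(I)$ carrying that label, so $A_\m=\{\sigma\}$, the strand above is a single copy of $\sfk$ in homological degree $\dim\sigma$, and therefore $\beta_{i,\m}(I)=1$ for $i=\dim\sigma$ and $\beta_{i,\m}(I)=0$ otherwise; this case uses no hypothesis. If $\m$ is not such a label, the hypothesis of the lemma says precisely that $\beta_{i,\m}(I)=0$ for all $i$. Combining the two cases, the nonzero multigraded Betti numbers of $I$ are exactly $\beta_{\dim\sigma,\,\m_\sigma}(I)=1$ for $\sigma\in\Scarf(I)$, so $\operatorname{rank}_S G_i=\sum_\m\beta_{i,\m}(I)=\#\{\,i\text{-dimensional faces of }\Scarf(I)\,\}=\operatorname{rank}_S F^{\Scarf(I)}_i$ for every $i$, where $G_\bullet$ is a minimal free resolution of $I$. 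The inclusion $F^{\Scarf(I)}_\bullet\hookrightarrow G_\bullet$ is split in each homological degree between free modules of the same finite rank, hence an isomorphism, so $F^{\Scarf(I)}_\bullet\cong G_\bullet$; in particular $F^{\Scarf(I)}_\bullet$ is exact with $H_0\cong I$. It is minimal because $\m_\sigma\neq\m_{\sigma\ssm\{v\}}$ whenever $\sigma\in\Scarf(I)$ and $v\in\sigma$, by part (1) of \cref{l:expansion}, so every nonzero entry of its differential is a nonunit monomial. Thus $\Scarf(I)$ supports a minimal free resolution of $I$.

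The step most worth being careful about is the bookkeeping in the middle paragraphs: identifying $(F^{\Taylor(I)}_\bullet\otimes_S\sfk)_\m$ with the $\sfk$-complex on $A_\m$, together with the observation that $A_\m$ is a singleton exactly when $\m$ is a Scarf label — after that the argument is just a rank count. If one preferred not to invoke ``$\Scarf(I)$ embeds in the minimal resolution,'' the alternative would be to verify the Bayer--Sturmfels acyclicity criterion for $\Scarf(I)$ directly; the real work there would be to show that each induced subcomplex $\{\sigma\in\Scarf(I)\st\m_\sigma\mid\m\}$ is $\sfk$-acyclic, which one can extract from the same Betti-number computation by an induction over the lcm-lattice $\LCM(I)$.
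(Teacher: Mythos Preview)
The paper does not prove this lemma: it is stated as a citation of \cite[Lemma~3.1]{BPS} and used as a black box, so there is no in-paper argument to compare against. Your proof is correct and is essentially the standard argument behind the cited result: compute multigraded Betti numbers via the Taylor complex, observe that Scarf multidegrees contribute exactly one generator in the right homological degree while the hypothesis kills all others, and conclude that the known inclusion $F^{\Scarf(I)}_\bullet\hookrightarrow G_\bullet$ into the minimal resolution is an isomorphism by a rank count.

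One small point worth tightening: the step ``split in each homological degree, hence an isomorphism'' deserves a word of justification. The paper's statement that ``for any face $\sigma\in\Scarf(I)$ the minimal free resolution of $I$ has a unique generator with multidegree equal to that of $\m_\sigma$'' is exactly what you need --- it says the Scarf generators form a subset of a free basis of $G_i$, so $F^{\Scarf(I)}_i$ is a direct summand. Alternatively, since the inclusion is multigraded and you have shown that every multidegree occurs with multiplicity at most one on both sides, the map is an isomorphism degree by degree. Either way the gap is cosmetic.
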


A main technique in this paper relies on an algebraic version of discrete Morse
theory, as developed by E.~Batzies and V.~Welker\cite{BW}, described below. 

\subsection{CW complexes and Morse matchings} \label{s:Morse} 
In general terms, a {\bf CW complex} is a topological space that can
be built inductively by a process of attaching $n$-disks along their
boundary. The interiors of the $n$-disks are the $n$-{\bf cells} of
the CW complex, which can be viewed as a disjoint union of cells. We
refer the reader to \cite{OW} for a precise definition.  As described
there, one can define a notion of free resolution supported on a CW
complex, generalizing the concepts discussed above.  We present below
the basic notions and results that will allow us to use discrete Morse
theory to construct free resolutions.

Let $V$ denote a finite set. We let $2^V$ denote the set of subsets of $V$. 
Let $Y$ be a subset of $2^V$. Then  $G_Y$ denotes  the directed graph whose
vertex set is $Y$, and whose set
of directed edges is
$$  E(G_Y)=\{\sigma \to \sigma\ssm\{v\} \st  v\in \sigma, \sigma\in Y, \sigma\ssm\{v\}\in
  Y\}.
$$
Suppose $\A$ is a matching on $G_Y$, that is, a set of pairwise disjoint
edges of $G_Y$, and let $G_Y^\A$ be a directed graph with the same
vertex set as $G_Y$, and with edges in $\A$ reversed, that is
$$E(G_Y^\A)= \big( E(G_Y)\ssm \A \big ) \cup
\{\sigma \ssm \{v\} \to \sigma  \st \sigma \to \sigma \ssm \{v\} \in \A  \}.$$

The matching $\A$ is {\bf acyclic} if the graph $G_Y^\A$ contains no
directed cycle.

A vertex of $G_Y$ that is not in $\A$ is called an {\bf
  $\A$-critical vertex} of $G_Y$, or an $\A$-{\bf critical cell}  of $Y$.

\begin{lemma} {\cite[Lemma 3.2(1)]{Morse}} \label{inclusions} 
Let $Y,Y'$ be subsets of $2^V$ such that $Y\subseteq Y'$. If
$\A\subseteq E_Y$, then $\A$ is an acyclic matching of $G_{Y'}$ if and
only if $\A$ is an acyclic matching of $G_Y$.
\end{lemma}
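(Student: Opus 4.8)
The statement to prove is \cref{inclusions}: for $Y \subseteq Y' \subseteq 2^V$ and $\A \subseteq E(G_Y)$, the set $\A$ is an acyclic matching of $G_{Y'}$ if and only if it is an acyclic matching of $G_Y$.

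The plan is to unwind the definitions and check three things separately: that $\A$ is a valid set of edges in both graphs, that the matching (pairwise disjoint) condition is equivalent in both graphs, and that acyclicity is equivalent. First I would observe that $G_Y$ is literally an induced subgraph of $G_{Y'}$: its vertex set $Y$ is a subset of $Y'$, and an edge $\sigma \to \sigma \ssm \{v\}$ lies in $E(G_Y)$ precisely when both endpoints lie in $Y$, so $E(G_Y) = E(G_{Y'}) \cap (Y \times Y)$. Since $\A \subseteq E(G_Y)$ by hypothesis, every edge of $\A$ is also an edge of $G_{Y'}$, and the pairwise-disjointness of the edges in $\A$ is a property of $\A$ alone (it does not reference the ambient graph), so $\A$ is a matching of $G_Y$ if and only if it is a matching of $G_{Y'}$.

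Next I would handle acyclicity, which is the only place the containment $Y \subseteq Y'$ could a priori cause trouble, since $G_{Y'}^\A$ has more vertices and more edges than $G_Y^\A$. The key point is that $G_Y^\A$ is again an induced subgraph of $G_{Y'}^\A$ on vertex set $Y$: reversing the edges of $\A$ affects only edges with both endpoints in $Y$ (as $\A \subseteq E(G_Y)$), so $E(G_Y^\A)$ is exactly the set of directed edges of $G_{Y'}^\A$ with both endpoints in $Y$. Now I would argue in both directions. If $G_{Y'}^\A$ has no directed cycle, then in particular no directed cycle lies inside $Y$, so $G_Y^\A$ has none. Conversely, suppose $G_{Y'}^\A$ has a directed cycle $C$; I claim $C$ must lie entirely in $Y$, which would give a cycle in $G_Y^\A$. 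To see this, note that the only edges of $G_{Y'}^\A$ with an endpoint outside $Y$ are the "downward" edges $\sigma \to \sigma \ssm \{v\}$ from $E(G_{Y'}) \ssm \A$ — the reversed (upward) edges of $\A$ all have both endpoints in $Y$. Along the cardinality function $|\cdot|$, every non-reversed edge strictly decreases cardinality by one and every reversed edge strictly increases it by one; so a directed cycle in $G_{Y'}^\A$ uses equally many reversed and non-reversed edges, and more to the point, if a vertex $\sigma \notin Y$ appears on the cycle, the edge of $C$ leaving $\sigma$ is a non-reversed edge $\sigma \to \sigma \ssm \{v\}$, and one checks (using that any vertex of $G_{Y'}^\A$ is the head of at most one reversed edge and the tail of non-reversed edges only) that the cycle cannot re-enter $Y$ and close up without contradiction. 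The cleanest way to make this precise is the standard Morse-theoretic observation that directed cycles in $G_{Y'}^\A$ alternate between reversed and non-reversed edges; hence every vertex on a cycle is the tail of a reversed edge or the head of a reversed edge, and all such vertices lie in $Y$ since $\A \subseteq E(G_Y)$. Therefore any cycle of $G_{Y'}^\A$ lies in $Y$ and is a cycle of $G_Y^\A$.

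Combining these observations gives the equivalence. The main obstacle — and the only genuinely substantive step — is the claim that a directed cycle in $G_{Y'}^\A$ cannot use a vertex outside $Y$; everything else is a routine verification that $G_Y$ and $G_Y^\A$ are induced subgraphs of $G_{Y'}$ and $G_{Y'}^\A$ respectively. I would resolve that obstacle via the alternation property of cycles in the modified graph: in $G_{Y'}^\A$ a directed path cannot traverse two non-reversed edges in a row without... actually it can (two consecutive downward steps), so I would instead use: if $\sigma \to \tau$ is a non-reversed edge on the cycle and $\sigma \notin Y$, then the previous edge on the cycle points into $\sigma$, hence must be a reversed edge $\sigma \ssm \{v\} \to \sigma$ of $\A$ (the only edges pointing "up" into $\sigma$), forcing $\sigma \in Y$ — contradiction. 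So in fact every vertex of the cycle that is the tail of a non-reversed edge is the head of a reversed edge, hence in $Y$; and vertices that are tails of reversed edges are in $Y$ by definition of $\A$. Since every cycle vertex is the tail of some edge of the cycle, every cycle vertex lies in $Y$, completing the argument.
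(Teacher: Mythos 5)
The paper cites this lemma from [Morse] without reproducing a proof, so there is no in-paper argument to compare against; I'll assess your proposal on its own merits. Your framing is correct and is surely the intended one: $G_Y$ and $G_Y^\A$ are the induced subgraphs of $G_{Y'}$ and $G_{Y'}^\A$ on vertex set $Y$ (because $\A\subseteq E(G_Y)$ means the reversed edges all lie inside $Y$), the matching condition depends only on $\A$ itself, and the one substantive claim is that a directed cycle of $G_{Y'}^\A$ must lie entirely inside $Y$. The right mechanism, which you do invoke, is that along such a cycle the reversed (``up'') and non-reversed (``down'') edges strictly alternate, so every cycle vertex is an endpoint of a reversed edge, and both endpoints of each reversed edge lie in $Y$ since $\A\subseteq E(G_Y)$.

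The gap is that alternation is asserted but never actually established, and the direct argument you offer in its place does not hold up. The step ``the previous edge on the cycle points into $\sigma$, hence must be a reversed edge $\sigma\ssm\{v\}\to\sigma$'' is a non sequitur: an edge pointing into $\sigma$ could just as well be a non-reversed (downward) edge $\rho\to\sigma$ with $|\rho|=|\sigma|+1$, and nothing you have said rules that out — indeed you correctly observe one sentence earlier that two consecutive downward steps are possible for a path, and then your ``instead'' argument tacitly assumes they are impossible. The parenthetical ``any vertex of $G_{Y'}^\A$ is the head of at most one reversed edge and the tail of non-reversed edges only'' is also false as stated, since a vertex can certainly be the tail of a reversed edge. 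The repair is short and you should include it: (i) two consecutive reversed edges $\sigma\to\sigma\cup\{v\}\to\sigma\cup\{v,w\}$ would make $\sigma\cup\{v\}$ an endpoint of two distinct edges of $\A$, contradicting that $\A$ is a matching, so no two up-steps are consecutive; (ii) each step changes cardinality by $\pm1$ and cardinality returns to its starting value around a cycle, so there are equally many up- and down-steps; (iii) equal counts together with ``no two consecutive up-steps'' forces strict alternation around the cycle. Once alternation is in hand, your concluding paragraph is correct and finishes the proof.
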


\begin{lemma}[{\bf Cluster Lemma} {\cite[Lemma 4.2]{Jo}}]\label{clusterlemma} 
Let $V\subseteq 2^V$, $Q$ a poset and $\{Y_q\}_{q\in Q}$ a partition
of $Y$ such that
$$
\text{If $\sigma\in Y_q$ and $\sigma'\in Y_{q'}$ satisfy $\sigma' \subseteq \sigma$, then $q'\le q$.}
$$
Let $\A_q$ be an acyclic matching on $G_{Y_q}$ for each $q$. Then $\A=\bigcup_{q\in Q}\A_q$ is an acyclic matching on $G_{Y}$. 
\end{lemma}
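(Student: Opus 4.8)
The plan is to prove acyclicity by contradiction, showing that any putative directed cycle in $G_Y^\A$ must in fact live inside a single $G_{Y_q}^{\A_q}$. First I would dispose of the (easy) claim that $\A$ is a matching on $G_Y$: every edge of $\A_q$ joins two faces of $Y_q\subseteq Y$ and so is an edge of $G_Y$; two edges from the same $\A_q$ are disjoint because $\A_q$ is a matching, and two edges from distinct $\A_q,\A_{q'}$ are disjoint because their endpoints lie in the disjoint blocks $Y_q,Y_{q'}$ of the partition. So $\A$ is a matching and only acyclicity requires work.

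For the main argument, write $q(\sigma)$ for the unique index with $\sigma\in Y_{q(\sigma)}$, and suppose $G_Y^\A$ contains a directed cycle $\sigma_0\to\sigma_1\to\cdots\to\sigma_k=\sigma_0$. I would examine each edge $\sigma_i\to\sigma_{i+1}$ according to the two types appearing in the definition of $G_Y^\A$. If it is a \emph{down} edge $\sigma_i\to\sigma_i\ssm\{v\}$ that was not reversed by $\A$, then $\sigma_{i+1}=\sigma_i\ssm\{v\}\subseteq\sigma_i$, so the compatibility hypothesis on the partition yields $q(\sigma_{i+1})\le q(\sigma_i)$. If it is an \emph{up} edge, i.e.\ the reversal of an edge $\sigma_{i+1}\to\sigma_{i+1}\ssm\{v\}=\sigma_i$ belonging to $\A$, then that edge lies in some $\A_q$, and since the edges of $\A_q\subseteq G_{Y_q}$ are internal to $Y_q$ we get $\sigma_i,\sigma_{i+1}\in Y_q$, whence $q(\sigma_i)=q(\sigma_{i+1})$. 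In either case $q(\sigma_{i+1})\le q(\sigma_i)$, so $q(\sigma_0)\ge q(\sigma_1)\ge\cdots\ge q(\sigma_k)=q(\sigma_0)$, and by antisymmetry of $Q$ all these indices are equal, say to $q_0$.

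It then remains to observe that the cycle actually lies in $G_{Y_{q_0}}^{\A_{q_0}}$. All its vertices lie in $Y_{q_0}$; a down edge of the cycle joins two faces of $Y_{q_0}$ and is not in $\A$, hence not in $\A_{q_0}$, so it is a down edge of $G_{Y_{q_0}}^{\A_{q_0}}$; an up edge of the cycle is the reversal of a matching edge both of whose endpoints lie in $Y_{q_0}$, and since the blocks $Y_q$ are pairwise disjoint that matching edge must be the $\A_{q_0}$-edge of those two endpoints, so the up edge is an up edge of $G_{Y_{q_0}}^{\A_{q_0}}$. Thus we have produced a directed cycle in $G_{Y_{q_0}}^{\A_{q_0}}$, contradicting the acyclicity of $\A_{q_0}$, and the lemma follows.

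I expect the only delicate point — really careful bookkeeping rather than a genuine obstacle — to be the treatment of the up edges, since that is exactly where one uses that $\A$ was assembled block by block, so that each matched pair sits entirely inside one block $Y_q$. Note that \cref{inclusions} shows each $\A_q$ is separately an acyclic matching of $G_Y$, but this does not suffice on its own, because a union of acyclic matchings need not be acyclic; the poset-compatibility hypothesis on the partition is precisely what forces any cycle to collapse into one block.
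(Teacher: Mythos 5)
Your proof is correct. The paper does not supply its own proof of this lemma---it cites Jonsson's book \cite[Lemma 4.2]{Jo}---so there is no internal argument to compare against; the argument you give is the standard one for the Cluster Lemma. The two pivotal observations both check out: along any directed edge of $G_Y^\A$ the index $q(\cdot)$ is weakly decreasing (down edges use the partition-compatibility hypothesis, up edges use that every matched pair $\{\sigma,\sigma\ssm\{v\}\}$ lives inside a single block because $\A_{q}\subseteq E(G_{Y_{q}})$), so by antisymmetry of $Q$ any directed cycle has all its faces in a single block $Y_{q_0}$; and the identification of the cycle with a cycle in $G_{Y_{q_0}}^{\A_{q_0}}$ is legitimate since the disjointness of the blocks forces a matching edge with both endpoints in $Y_{q_0}$ to belong to $\A_{q_0}$ rather than to some other $\A_{q'}$.
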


\begin{lemma}{\cite[Lemma 3.3]{Morse}} \label{matching-lemma}
Let $Y\subseteq 2^V$ and let $v\in V$.  Then
\begin{equation*}
\begin{aligned}
A_Y^{v} = \big\{\sigma\to \sigma'\in E_Y\st v\in \sigma   {\text{ and }} \sigma'=\sigma\ssm \{v\}\big\}
\end{aligned}
\end{equation*}
is an acyclic matching on $G_Y$. 
\end{lemma}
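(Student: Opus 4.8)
The plan is to verify directly the two conditions in the definition of an acyclic matching for $\A := A_Y^v$: that $\A$ is a matching on $G_Y$ (a set of pairwise disjoint edges), and that the graph $G_Y^{\A}$ has no directed cycle.

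First I would check the matching property. Every edge of $\A$ has the form $\sigma\to \sigma\ssm\{v\}$ with $v\in\sigma$, so its source contains $v$ and its target does not. Consequently, if two edges of $\A$ shared a vertex $\tau$, then $\tau$ would have to be the source of both edges — forcing the two sources, and hence the two edges, to be equal — or the target of both, so that $\sigma_1\ssm\{v\}=\tau=\sigma_2\ssm\{v\}$, and since $v\in\sigma_1$ and $v\in\sigma_2$ this gives $\sigma_1=\tau\cup\{v\}=\sigma_2$, again the same edge. The remaining possibility, $\tau$ being the source of one edge and the target of another, is impossible because a source contains $v$ while a target does not. Hence $\A$ is a matching.

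Next I would prove acyclicity by a bookkeeping argument that tracks the single vertex $v$. The key observation is that \emph{every} edge of $G_Y$ which removes $v$ already belongs to $\A$. Therefore the edge set of $G_Y^{\A}$ splits into ``down'' edges $\tau\to\tau\ssm\{w\}$ (those of $E_Y\ssm\A$), none of which removes $v$, and ``up'' edges $\tau\ssm\{v\}\to\tau$ (the reversals of the edges of $\A$), each of which adjoins $v$ to a set not containing it. Suppose $G_Y^{\A}$ had a directed cycle $C$. Since a down edge decreases cardinality by one and an up edge increases it by one, and the total change around $C$ is zero, $C$ must contain at least one up edge (a cycle of down edges alone is impossible, cardinality being strictly decreasing). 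Fix an up edge $\tau\ssm\{v\}\to\tau$ on $C$; its source does not contain $v$ while $\tau$ does. Traversing $C$ forward from $\tau$: down edges never delete $v$, and no up edge can be traversed from a set that already contains $v$, so every vertex of $C$ encountered after $\tau$ contains $v$. But $C$ must eventually return to the source $\tau\ssm\{v\}$, which does not contain $v$ — a contradiction. Hence $G_Y^{\A}$ is acyclic, and $\A$ is an acyclic matching.

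I do not anticipate a genuine obstacle: the argument is elementary set bookkeeping once one isolates the fact that $\A$ captures \emph{all} edges of $G_Y$ deleting the chosen vertex $v$, which is precisely what prevents a directed cycle from ever re-descending below $v$ after an up edge. The only places demanding a little care are making sure the ``matching'' verification covers every way two edges could meet (shared source, shared target, or source meeting target), and setting up the up/down cardinality count so that the presence of an up edge in any cycle is forced.
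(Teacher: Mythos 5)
Your proof is correct and complete. Note that the paper does not supply a proof of this lemma; it cites it as {\cite[Lemma 3.3]{Morse}}, so there is no in-paper argument to compare against. Your two-part argument is exactly the standard one: the source/target distinction (source contains $v$, target does not) forces the edges of $A_Y^v$ to be pairwise disjoint, and the key observation for acyclicity is that \emph{all} of $G_Y$'s edges deleting $v$ lie in $A_Y^v$, so in $G_Y^{A_Y^v}$ the element $v$ can only be added, never removed; once any directed path uses an up-edge $\tau\ssm\{v\}\to\tau$, every subsequent vertex contains $v$, so the path can never close up to $\tau\ssm\{v\}$. The cardinality count correctly establishes that every directed cycle must contain at least one up-edge, which is the step that lets you apply the monotonicity observation. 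This is the expected proof.
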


Assume now that $Y$ is a subset of $\Taylor(I)$. If $\A$ is a matching
on $G_Y$, we say that $\A$ is {\bf homogeneous} if
$\m_\sigma=\m_{\sigma'}$ for all $\sigma\to \sigma'\in \A$. 

Batzies and Welker used homogeneous acyclic matchings to build
cellular resolutions of monomial ideals.

\begin{theorem}[\cite{BW}]\label{t:BW} Let $I$ be a monomial ideal and
  $\A$ be a homogeneous acyclic matching on $G_{\Taylor(I)}$. Then there is a
  CW complex  $\X_\A$ supporting a free resolution of $I$, where for
  each $i\geq 0$, the $i$-cells of $\X_\A$ are in one-to-one
  correspondence with the $i$-dimensional $\A$-critical cells of  $\Taylor(I)$. 
\end{theorem}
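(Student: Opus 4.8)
The plan is to prove this by combining algebraic discrete Morse theory with a topological realization of the resulting Morse complex. We start from the fact that $\Taylor(I)$ supports the Taylor resolution $F^{\Taylor(I)}_\bullet$ of $S/I$: in homological degree $i$ it is free on the $(i-1)$-faces of the simplex on $q$ vertices, the face $\sigma$ contributing a generator of multidegree $\m_\sigma$, and the differential sends $\sigma$ to $\sum_{v\in\sigma}\pm(\m_\sigma/\m_{\sigma\ssm\{v\}})\,(\sigma\ssm\{v\})$. Observe that the edges of $G_{\Taylor(I)}$ record exactly the (potentially) nonzero entries of this differential between a face and a codimension-one subface.

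First I would run algebraic discrete Morse theory (in the form of Sk\"oldberg / J\"ollenbeck--Welker) on this based free complex. Because $\A$ is homogeneous, each matched edge $\sigma\to\sigma'=\sigma\ssm\{v\}$ has $\m_\sigma=\m_{\sigma'}$, so its differential entry $\pm\m_\sigma/\m_{\sigma'}=\pm1$ is a unit; such a pair can be split off as a contractible direct summand (Gaussian elimination of a $1\times 1$ block), and acyclicity of $\A$ guarantees that all these eliminations can be carried out simultaneously and consistently, so the gradient paths (alternating zigzags in $G_{\Taylor(I)}^\A$) are finite and the Morse differential is a well-defined $S$-linear map. The standard argument then yields a complex $F^\A_\bullet$, free on the $\A$-critical cells, which is chain-homotopy equivalent to $F^{\Taylor(I)}_\bullet$ and hence is again a free resolution of $S/I$. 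Homogeneity also keeps everything multigraded: a critical $(i-1)$-cell $\sigma$ retains multidegree $\m_\sigma$, and along any gradient path from a critical cell $\sigma$ to a critical cell $\sigma'$ the label is preserved on the reversed (``up'') steps and can only drop on the unmatched (``down'') steps, so the path contributes a homogeneous coefficient of multidegree $\m_\sigma/\m_{\sigma'}$. Next I would exhibit a CW complex $\X_\A$ whose homogenized cellular chain complex is precisely $F^\A_\bullet$. Here one uses Forman's discrete Morse theory for the regular CW complex underlying the simplex $\Taylor(I)$: the acyclic matching $\A$ determines a sequence of elementary collapses whose outcome is a CW complex $\X_\A$ with exactly one $i$-cell for each critical $i$-cell, carrying attaching maps whose incidence numbers are the signed counts of gradient paths — the very numbers defining the Morse differential. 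One then checks that the labels are compatible with the cell structure (a cell labeled $\m_\sigma$ is attached along cells whose labels divide $\m_\sigma$), so $F^{\X_\A}_\bullet$ is defined and coincides with $F^\A_\bullet$; truncating below degree $1$ gives a free resolution of $I$ supported on $\X_\A$ with $i$-cells in bijection with the $i$-dimensional $\A$-critical cells.

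The main obstacle is the topological realization in the last step: turning the algebraically defined Morse differential into an honest CW structure, i.e.\ producing attaching maps of the correct incidence so that the cellular chain complex of $\X_\A$ (after homogenization) equals $F^\A_\bullet$ on the nose, rather than merely being quasi-isomorphic to it, and checking that $\X_\A$ can be chosen with the label-compatibility needed for homogenization to make sense. This rests on the fact that $\Taylor(I)$ is contractible and that discrete Morse collapses preserve simple homotopy type, so that exactness of $F^{\X_\A}_\bullet$ in the relevant degrees is inherited. By contrast, the algebraic reduction in the first step is comparatively routine once homogeneity is assumed, being a finite sequence of eliminations of unit entries whose coherence is exactly guaranteed by acyclicity of $\A$.
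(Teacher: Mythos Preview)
The paper does not prove this theorem; it is quoted from Batzies--Welker \cite{BW} and used as a black box, with only the additional remarks after the statement (on label-preservation and the description of the cell incidences via gradient paths) added. So there is no ``paper's own proof'' to compare against.

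Your sketch is a reasonable outline of why the result is true, but it is organized somewhat differently from the original \cite{BW}. Batzies and Welker work topologically from the outset: they start with a regular CW complex supporting a resolution (here the simplex $\Taylor(I)$), apply Forman-style discrete Morse theory to produce a collapsed CW complex $\X_\A$, and then verify directly that the homogenized cellular chain complex of $\X_\A$ is exact by computing its homology via the Morse differential. You instead first pass through algebraic discrete Morse theory (\`a la Sk\"oldberg or J\"ollenbeck--Welker) to obtain the Morse complex $F^\A_\bullet$ as a chain-homotopy retract of the Taylor resolution, and only afterwards seek a CW realization. Both routes arrive at the same place, but yours imports an extra layer of machinery and then has to reconcile it with the topology, whereas \cite{BW} keeps the topology and the algebra in lockstep throughout. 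The point you flag as the ``main obstacle'' --- producing genuine attaching maps with the prescribed incidence numbers --- is handled in \cite{BW} by the standard Forman collapse, which automatically yields a CW complex in the same simple-homotopy type with the correct cellular chain complex; it is not an obstacle so much as the content of the cited theorem.
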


The one-to-one correspondence in \cref{t:BW} preserves monomial
labels. More precisely, if $\sigma$ is an $\A$-critical face of $\Taylor(I)$, and $\sigma_\A$ is the unique cell of $\X_\A$ corresponding to $\sigma$, then
$$\m_\sigma=\m_{\sigma_\A}.$$ Moreover by (\cite[Proposition
  7.3]{BW}), for $\A$-critical $i$ and $(i-1)$-faces $\sigma$ and
$\sigma'$ of $\Taylor(I)$, the cell $\sigma'_\A$ is a subcell of
$\sigma_\A$ (or in other words $\sigma'_\A \leq \sigma_\A$ ) if and
only if there is a directed path as in \eqref{e:gradient} in $G^\A_{\Taylor(I)}$ starting from
$\sigma=\sigma_0$ and ending at $\sigma'=\sigma_t\ssm\{v_t\}$.

\begin{equation}\label{e:gradient}
   \begin{array}{ccccccccccccc}
  \sigma_0 &&&& \sigma_1 &\ldots&\sigma_{t-1}&&&& \sigma_{t} &&  \\
      &\searrow&&\nearrow &&&&\searrow &&\nearrow &&\searrow & \\ 
  &&\sigma_0\ssm \{v_0\}&&&&&& \sigma_{t-1} \ssm\{v_{t-1}\}&&&&
       \sigma_{t} \ssm \{v_t\} \\ 
  \end{array}
\end{equation}

From the construction of this path it follows that all faces $\sigma_i$
and $\sigma_i\ssm \{v_i\}$ in \eqref{e:gradient}, except for the start face
$\sigma$ and the end face $\sigma'$, must be in the matching $\A$
(\cite[Discussion~6.3]{Morse}).

\begin{remark}
\label{r:BW}
With assumptions as in \cref{t:BW}, if the set of $\A$-critical cells of $\Taylor(I)$ is equal to $\Scarf(I)$, it follows from \cref{l:BPS} that $\Scarf(I)$ supports a minimal free resolution of $I$.  
\end{remark}

\section{{\bf \large  The extremal ideals $\E_q$}}
\label{s:extremal-ideals}

In this section, we recall the definition introduced in \cite{Lr} of a
class of ideals $\E_q$, minimally generated by $q$ square-free monomials, for
which the powers $\Erq$ have maximal betti numbers among the ideals
$I^r$ where $I$ is minimally generated by $q$ square-free
monomials. We then recall and expand some of the relevant results in \cite{Lr}. 

\begin{definition}[{\bf Extremal ideals} Definition~7.1~\cite{Lr}]\label{d:extremal}
Let $q$ be a positive integer. For every set $A$ with $\emptyset \neq
A \subseteq [q]$, introduce a variable $x_A$, and then set $S_{\E}$ to
be the polynomial ring over a field $\sfk$ shown below:
$$S_{\E}=\sfk\big [ x_A \st \emptyset \neq A \subseteq [q] \big ].$$
 For each $i \in [q]$, define a square-free
monomial $\e_i$ in $S_{\E}$ as
  $$\e_i= \prod_{\substack{A \subseteq [q]\\ i \in A}} x_A.$$ The square-free monomial ideal $\E_q =
  (\e_1,\ldots, \e_q)$ is called a {\bf $\pmb{q}$-extremal ideal}.
  \end{definition}

As demonstrated in the example below, when it is unlikely to lead to
confusion, we will simplify the notation by writing $x_1$ for
$x_{\{1\}}$, $x_{12}$ for $x_{\{1,2\}}$, etc., and refer to a
$q$-extremal ideal simply as an extremal ideal.

\begin{example} \label{mainex} When $q=4$, the ideal $\E_4$ is generated
  by the monomials
  $$\begin{array}{ll}
  \e_1&=x_1 x_{12}  x_{13} x_{14} x_{123} x_{124} x_{134} x_{1234},  \\
  \e_2&=x_{2} x_{12} x_{23} x_{24} x_{123} x_{124} x_{234} x_{1234}, \\
  \e_3&=x_{3} x_{13} x_{23} x_{34} x_{123} x_{134} x_{234} x_{1234}, \\
  \e_4&=x_{4} x_{14} x_{24} x_{34} x_{124} x_{134} x_{234} x_{1234} \\
  \end{array}$$
in the ring $\sfk[x_1, x_2, x_3, x_4, x_{12}, x_{13}, x_{14}, x_{23}, x_{24}, x_{34}, x_{123}, x_{124}, x_{134}, x_{234}, x_{1234}]$. 
\end{example}

In general, the ring $S_{\E}$ has $2^{q} - 1$ variables, corresponding
to the power set of $[q]$ (excepting $\emptyset$), and each $\e_i$ is
the product of $2^{q-1}$ variables; in particular, those corresponding
to the subsets of $[q]$ that contain $i$.

Using the terminology of \cite{PV}, one can easily verify that $\E_q$
is precisely the {\it nearly Scarf ideal} of a $q$-simplex, by
corresponding the variable $x_{[q]\ssm B}$ to a face $B$ of the
simplex, and therefore $\E_q$ has the Taylor resolution as its minimal
free resolution. The Taylor resolution is an upper bound for the
minimal free resolution of {\it any} monomial ideal. It turns out, as stated in \cref{t:upperbound} below, that
all powers of $\E_q$ also have a similar property. The rest of this section is dedicated to explaining that simplicial
 resolutions of powers of extremal ideals bound minimal free
 resolutions of powers of all square-free monomial ideals, and to discussing first steps (as initiated in \cite{Lr}) towards finding such resolutions. 

We recall below some of the results of \cite{Lr}, starting with  necessary notation. 

\begin{notation} \label{n:rqs}
For a field $\sfk$, let $S = \sfk[x_1, \dots, x_n]$ and $I$ be an
ideal of $S$ minimally generated by square-free monomials
$m_1,\ldots,m_q$. Let $$r, q \in \NN \qand \ba = (a_1, \dots, a_q),
\bb = (b_1, \dots, b_q) \in \NN^q.$$
Define 
\begin{itemize}
\setlength\itemsep{0.8em}
\item $[q] =\{1, 2, \dots, q\}$
\item $|\ba|=a_1 + \cdots + a_q$
\item $\pmea={\e_1}^{a_1}\cdots {\e_q}^{a_q}$ and $\bm^{\ba}={m_1}^{a_1}\cdots {m_q}^{a_q}$
\item $\Supp(\ba) = \{j \, | \,a_j \neq 0 \}$
\item $\Nrq = \{\bc \in \NN^q \st |{\bc}| =r\}$
\item $\egcd(\pmea, \pmeb) = \e_1^{\min(a_1, b_1)} \cdots
    \e_q^{\min(a_q, b_q)}$
\item $\Trq=\Taylor(\Erq)$
\item $\Srq=\Scarf(\Erq)$. 
\end{itemize}
\end{notation}

The following statement, which follows directly from
\cite[Equation~(7.3.1)]{Lr}, allows us to use $\ba \in \Nrq$ and
$\pmea$ interchangeably to identify a vertex of $\Trq$.

\begin{proposition}[{\bf The vertices of the Taylor complex $\Trq$}]\label{p:vertices}
  If $\ba$, $\bb\in \Nrq$, then $\pmea=\pmeb$ if and only if
  $\ba=\bb$. In particular, the vertices of $\Trq$ are labeled with
  distinct monomials $\pmea$ with $\ba\in \Nrq$.
\end{proposition}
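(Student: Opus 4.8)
The plan is to establish the nontrivial implication: assuming $\pmea=\pmeb$, deduce $\ba=\bb$ (the converse being immediate). The key observation is that each coordinate $a_i$ can be read off from the monomial $\pmea$ as the exponent of a single, carefully chosen variable. First I would fix $i\in[q]$ and look at the variable $x_{\{i\}}$ attached to the singleton $\{i\}$. By \cref{d:extremal}, $x_{\{i\}}$ is a factor of $\e_j$ exactly when $j\in\{i\}$, that is, only when $j=i$, and then to the first power. Hence in the product $\pmea=\e_1^{a_1}\cdots \e_q^{a_q}$ the variable $x_{\{i\}}$ occurs with exponent precisely $a_i$, and likewise $x_{\{i\}}$ occurs in $\pmeb$ with exponent $b_i$.

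Since $\pmea=\pmeb$ as monomials, their exponent vectors coincide coordinatewise, so comparing the exponent of $x_{\{i\}}$ on both sides gives $a_i=b_i$. As $i\in[q]$ was arbitrary, $\ba=\bb$. (Note that this argument never uses $|\ba|=|\bb|=r$, so the equivalence in fact holds for all $\ba,\bb\in\NN^q$; the restriction to $\Nrq$ is only what is needed below.) The ``in particular'' statement then follows at once: restricting to $\ba\in\Nrq$, distinct elements of $\Nrq$ yield distinct monomial labels $\pmea$, so the vertices of $\Trq=\Taylor(\Erq)$ carry pairwise distinct monomial labels, and one may identify a vertex with the tuple $\ba$ or the monomial $\pmea$ interchangeably.

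There is essentially no obstacle here; the only point requiring a moment's thought is choosing a variable that isolates a single generator $\e_i$, and the singletons $\{i\}\subseteq[q]$ do exactly that. Any $x_A$ with $|A|=1$ works, whereas $x_A$ with $|A|\ge 2$ would record $\sum_{j\in A}a_j$ and hence would not, on its own, separate the coordinates. One could alternatively invoke the fact recorded after \cref{d:extremal} that $\E_q$ is the nearly Scarf ideal of a $q$-simplex, but the direct exponent computation is shorter and self-contained.
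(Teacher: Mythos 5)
Your proof is correct and takes the same route the paper gestures at: the paper simply cites the exponent formula from~\cite{Lr} (equivalent to the $d=1$ case of equation~\eqref{lcm} in \cref{l:when-divide}, namely $\pmea=\prod_{\emptyset\ne A\subseteq[q]}x_A^{\sum_{i\in A}a_i}$), from which reading off the exponent of the singleton variable $x_{\{i\}}$ recovers $a_i$ exactly as you do. Your observation that the claim holds for all $\ba,\bb\in\NN^q$ without the constraint $|\ba|=|\bb|=r$ is also accurate and a nice touch.
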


The comparison of a free resolution of  $I^r$,
when $I$ is generated by square-free monomials $m_1,\ldots,m_q$ in
$S$, to a free resolution of $\Erq$ in $\SE$ is done using the ring
homomorphism $\psi_I$ presented below. 

  \begin{definition}[{\bf The ring homomorphism $\psi_I$, \cite[Definition 7.5]{Lr}}]\label{d:psi}
  Let $I$ be an ideal of the
  polynomial ring $S=\sfk[x_1,\ldots,x_n]$ minimally generated by
  square-free monomials $m_1,\ldots,m_q$. For each $k \in [n]$ set
  $$A_k= \{j \in [q] \st x_k \mid m_j\}$$
 and define $\psi_I$ to be the
  ring homomorphism $$\psi_I \colon \SE \to S \qwhere
  \psi_I(x_A)=\begin{cases}
  \displaystyle \prod_{\substack{k\in[n] \\ A=A_k}} x_k & \mbox{if } A=A_k \mbox{ for some
  } k \in [n],\\ 1 & \mbox{otherwise}.
  \end{cases}
$$
\end{definition}

 \begin{lemma}[{\cite[Lemma 7.7]{Lr}}] 
 \label{l:psi}
 Let $I$ be an ideal of the
   polynomial ring $S=\sfk[x_1,\ldots,x_n]$, minimally generated by
   square-free monomials $m_1,\ldots,m_q$. Then
   \begin{enumerate}
   \item[(i)] $\psi_I(\pmea)=\bma$ for each $\ba\in \Nrq$;
   \item[(ii)] $\psi_I(\Erq)S  =I^r$ for every $r\geq 1$;
   \item[(iii)] $\psi_I$ preserves least
     common multiples, that is: 
$$
   \psi_I(\lcm(\pme^{\ba_1}, \dots, \pme^{\ba_t}))=\lcm(\bm^{\ba_1}, \dots, \bm^{\ba_t}) \qforall \ba_1,\ldots,\ba_t \in \Nrq, \quad t\ge 1.
$$     
   \end{enumerate}
 \end{lemma}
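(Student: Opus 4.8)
The plan is to prove (i), (ii), (iii) in that order, since (ii) and (iii) both reduce to (i); the only structural input is an observation about the map $\psi_I$. The indices $A_k$ attached to the variables $x_k$ of $S$ need not be distinct, but $k\mapsto A_k$ groups $[n]$ into the pairwise disjoint fibers $K_A:=\{k\in[n]\st A_k=A\}$, and by definition $\psi_I(x_A)=\prod_{k\in K_A}x_k$, understood as $1$ when $K_A=\emptyset$. After deleting from $S$ any variable dividing no $m_j$ --- which changes neither $I$, nor any $\bm^{\ba}$, nor the map $\psi_I$ --- we may assume $[n]=\bigsqcup_{\emptyset\ne A\subseteq[q]}K_A$, so that $\psi_I$ sends distinct variables of $\SE$ to monomials with disjoint supports among the variables of $S$.

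For (i): since $\psi_I$ is a ring homomorphism, $\psi_I(\pmea)=\prod_{i=1}^q\psi_I(\e_i)^{a_i}$, so it suffices to show $\psi_I(\e_i)=m_i$. Expanding the definitions,
\begin{align*}
\psi_I(\e_i)&=\prod_{A\ni i}\psi_I(x_A)=\prod_{A\ni i}\ \prod_{k\st A_k=A}x_k\\
&=\prod_{k\st i\in A_k}x_k=\prod_{k\st x_k\mid m_i}x_k=m_i,
\end{align*}
where the equality on the second line rewrites the double product over pairs $(A,k)$ with $i\in A$ and $A_k=A$ via the bijection $k\mapsto(A_k,k)$ onto the set of $k$ with $i\in A_k$, and the last equality uses that $m_i$ is square-free. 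Hence $\psi_I(\pmea)=\prod_i m_i^{a_i}=\bm^{\ba}$.

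Part (ii) follows at once, since a ring homomorphism carries a generating set of an ideal to a generating set of the extended ideal: using $\Erq=(\pmea\st\ba\in\Nrq)$ and (i),
$$\psi_I(\Erq)S=(\psi_I(\pmea)\st\ba\in\Nrq)S=(\bm^{\ba}\st\ba\in\Nrq)S=I^r.$$
For (iii) I would prove the stronger statement that $\psi_I(\lcm(u_1,\dots,u_t))=\lcm(\psi_I(u_1),\dots,\psi_I(u_t))$ for arbitrary monomials $u_1,\dots,u_t\in\SE$, and then specialize to $u_j=\pme^{\ba_j}$ using (i). By disjointness of the fibers $K_A$, every monomial $u\in\SE$ and every $k\in[n]$ satisfy $\deg_{x_k}\psi_I(u)=\deg_{x_{A_k}}(u)$; so, writing $w=\lcm(u_1,\dots,u_t)$ and $w'=\lcm(\psi_I(u_1),\dots,\psi_I(u_t))$ and using that an lcm of monomials is formed variable by variable as a maximum of exponents,
\begin{align*}
\deg_{x_k}\psi_I(w)&=\deg_{x_{A_k}}(w)=\max_j\deg_{x_{A_k}}(u_j)\\
&=\max_j\deg_{x_k}\psi_I(u_j)=\deg_{x_k}(w')
\end{align*}
for every $k$, whence $\psi_I(w)=w'$.

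The one step genuinely requiring care is the disjointness of the fibers $K_A$ underpinning (iii): a general ring homomorphism does not preserve least common multiples, and it is precisely the combinatorial shape of the definition of $\psi_I$ --- that no two distinct variables of $\SE$ map to monomials with overlapping support in $S$ --- that forces the lcm identity. Everything else is routine, the only mildly delicate point being the harmless deletion of variables $x_k$ with $A_k=\emptyset$ performed at the outset.
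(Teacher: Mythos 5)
The paper records this lemma verbatim from \cite[Lemma 7.7]{Lr} and gives no in-text proof, so there is no argument of the paper's own to compare against; I therefore assess your proof on its merits. It is correct. The structural observation you single out --- that the fibers $K_A=\{k\in[n]\st A_k=A\}$ of $k\mapsto A_k$ are pairwise disjoint, so distinct variables of $\SE$ map under $\psi_I$ to pairwise coprime monomials of $S$ --- is exactly what (iii) needs, since a generic ring homomorphism does not commute with $\lcm$. From this disjointness the exponent-transfer identity $\deg_{x_k}\psi_I(u)=\deg_{x_{A_k}}(u)$ follows, and then the variable-by-variable comparison proves the stronger statement that $\psi_I$ commutes with $\lcm$ of arbitrary monomials in $\SE$, of which (iii) is the special case $u_j=\pme^{\ba_j}$. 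Part (i), via the bijection between pairs $(A,k)$ with $i\in A$, $A_k=A$ and indices $k$ with $x_k\mid m_i$, and part (ii), via the standard fact that a ring map sends generators of an ideal to generators of the extended ideal, are both routine and correctly handled. One small remark: the preliminary deletion of variables $x_k$ with $A_k=\emptyset$ is harmless but not actually needed --- such $x_k$ divide none of the $m_j$ and never occur in any $\psi_I(x_A)$ (there is no variable $x_\emptyset$ in $\SE$), so they play no role on either side of any of the three identities.
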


An immediate consequence of \cref{l:psi} is the following statement,
that shows the Scarf complex of $\Erq$ contains the Scarf complex of
any $I^r$ where $I$ is minimally generated by $q$ square-free
monomials, using the $\psi$ function. For this statement to make sense
we consider both Scarf complexes as labeled by vectors $\ba \in \Nrq$,
rather than the corresponding monomials $\bma$ or $\pme^{\ba}$.
 
  \begin{proposition}[{\bf $\Scarf(I^r) \subseteq \Scarf(\Erq)$}]\label{p:psi-Scarf}
    Let $r, q \geq 1$ and let $I$ be an ideal minimally generated by
    $q$ square-free monomials $m_1,\ldots,m_q$. Then $\Scarf(I^r) \subseteq \Srq$.
 \end{proposition}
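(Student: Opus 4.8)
The plan is to push the defining property of the Scarf complex forward along the ring homomorphism $\psi_I$, exploiting that $\psi_I$ is compatible with monomial labels. First I would fix the indexing announced just before the statement. By \cref{p:vertices} the vertices of $\Trq$ correspond bijectively to $\Nrq$ via $\ba\leftrightarrow\pme^{\ba}$, so each face of $\Trq$, and hence each face of $\Srq$, is a subset $\sigma\subseteq\Nrq$ with label $\m_\sigma=\lcm(\pme^{\ba}\st\ba\in\sigma)$. Presenting $I^r$ by the (possibly repeated) generators $\bm^{\ba}$, $\ba\in\Nrq$, the ambient Taylor complex of $I^r$ is likewise a simplex on the vertex set $\Nrq$, with $\sigma$ labeled by $\lcm(\bm^{\ba}\st\ba\in\sigma)$, and $\Scarf(I^r)$ is the subcomplex of faces whose label is not repeated by any other face; this is the sense in which both Scarf complexes are regarded as complexes on $\Nrq$.

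The one computation needed is that $\psi_I$ transports $\Trq$-labels to $\Taylor(I^r)$-labels: for every $\sigma\subseteq\Nrq$,
\[
\psi_I(\m_\sigma)=\psi_I\bigl(\lcm(\pme^{\ba}\st\ba\in\sigma)\bigr)=\lcm\bigl(\psi_I(\pme^{\ba})\st\ba\in\sigma\bigr)=\lcm\bigl(\bm^{\ba}\st\ba\in\sigma\bigr),
\]
where the second equality is \cref{l:psi}(iii) and the third is \cref{l:psi}(i).

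I would then argue by contraposition. Let $\sigma\subseteq\Nrq$ be a face of $\Trq$ that does \emph{not} lie in $\Srq$. By the definition of the Scarf complex there is a face $\tau$ of $\Trq$ with $\tau\neq\sigma$ and $\m_\tau=\m_\sigma$. Applying $\psi_I$ and the identity above yields $\lcm(\bm^{\ba}\st\ba\in\tau)=\lcm(\bm^{\ba}\st\ba\in\sigma)$, so $\sigma$ and $\tau$ are distinct faces of $\Taylor(I^r)$ carrying the same monomial label; hence $\sigma\notin\Scarf(I^r)$. This gives $\Scarf(I^r)\subseteq\Srq$. The same reasoning goes through via \cref{l:expansion} if one prefers to take $\tau$ of the special form $\sigma\ssm\{v\}$ or $\sigma\cup\{v\}$.

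I do not expect a genuine obstacle: granted the bookkeeping, the statement is a one-line consequence of \cref{l:psi}, namely that $\psi_I$ sends $\pme^{\ba}$ to $\bm^{\ba}$ and commutes with least common multiples. The only point needing a word of care is that $\ba\mapsto\bm^{\ba}$ may fail to be injective and $\bm^{\ba}$ need not be a minimal generator of $I^r$; but these degeneracies only cause \emph{more} faces to be discarded from $\Scarf(I^r)$, which is harmless for a containment into $\Srq$ and is already absorbed into the convention of indexing $\Scarf(I^r)$ by $\Nrq$.
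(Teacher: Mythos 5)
Your proposal is correct and is essentially the paper's own argument, phrased contrapositively rather than directly: both rest solely on the fact that $\psi_I$ carries $\pme^{\ba}$ to $\bm^{\ba}$ and commutes with lcm (\cref{l:psi}), so that an lcm-collision among faces of $\Trq$ pushes forward to an lcm-collision among the corresponding faces of $\Taylor(I^r)$. The parenthetical remark about indexing $\Scarf(I^r)$ by $\Nrq$ and the non-injectivity of $\ba\mapsto\bm^{\ba}$ matches the convention the paper sets up just before the proposition, so no further care is needed.
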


 \begin{proof} Let $\sigma=\{\ba_1,\ldots, \ba_t\} \in \Scarf(I^r)$. If
   for some $\bb_1,\ldots, \bb_s \in \Nrq$,
   $$\lcm(\pme^{\bb_1},\ldots, \pme^{\bb_s})=\lcm(\pme^{\ba_1},\ldots,
   \pme^{\ba_t})$$ then by
   \cref{l:psi} $$\lcm(\bm^{\bb_1},\ldots,
   \bm^{\bb_s})=\lcm(\bm^{\ba_1},\ldots, \bm^{\ba_t})$$ which
   implies that $s=t$ and 
   $$\{\bb_1,\ldots,\bb_s\}=\{\ba_1, \ldots,\ba_t\}.$$ Hence
   $\{\ba_1,\ldots, \ba_t\} \in \Srq=\Scarf(\Erq)$.
 \end{proof}

Now consider a simplicial complex $\Delta$ supporting a free
resolution of $\Erq$. Since  $\Delta$ is a subcomplex of $\Trq$, the
vertices of $\Delta$ are identified by the elements $\ba \in
\Nrq$, where each vertex $\ba$ has monomial label $\pmea$
(\cref{p:vertices}). 

We let $\psi_I(\Delta)$ be the simplicial complex $\Delta$, with each
vertex $\ba$ relabeled by the monomial $\bma$.  While $\psi_I(\Delta)$
may have multiple vertices sharing the same label, we will see below
that its homogenized chain complex supports a free resolution of
$I^r$. In particular, (the simplicial chain complex of) $\Delta$ is an
upper bound for the minimal free resolution of $I^r$ for any $I$
generated by $q$ square-free monomials.

To make this point in a precise manner, we need to introduce a
(non-standard) grading on $\SE$, which we call the {\bf
  $\psi_I$-grading}. Under the standard grading the ring $S$ is
$\ZZ^n$-graded, and the ring $\SE$ is $\ZZ^{2^q-1}$-graded. The the
$\psi_I$-grading adjusts the standard grading on $\SE$ to make
$\psi_I$ into a homogeneous map. In other words, assuming the standard
grading on $S$ (where the degree of each variable is $1$), the
multidegree of the variable $x_A$ is the same as the multidegree of
$\psi_I(x_A)$, and thus the multidegree of $\pmea$ becomes the same as
the the multidegree of $\bm^{\ba}$ for $\ba\in \Nrq$.  To distinguish
between the standard and nonstandard gradings, we let
$\widetilde{\SE}$ denote the ring $\SE$ with the $\psi_I$-grading.
  
  Using the one-to-one correspondence between $\ZZ^n$ and the set of
  monomials $\M(S)$ in the polynomial ring $S$, we can use the
  elements of $\M(S)$ to index the $\ZZ^n$-grading on $S$, and
  similarly for $\M(\SE)$. Therefore, as graded objects, the
decompositions of $\SE$ and $\widetilde{\SE}$ are
$$ \SE          =\bigoplus_{\m\in \M(\SE)} \sfk \m \qand
\widetilde{\SE} =\bigoplus_{\m'\in \M(S)} \Big(
\bigoplus_{\tiny \begin{array}{c}\m\in \M(\SE)\\ \psi_I(\m)=\m' \end{array}}
\sfk \m \Big ).
$$

If $W=\oplus_{\m\in \M(\SE)}W_\m $ is a
$\ZZ^{2^q-1}$-graded $\SE$-module, we denote by $\widetilde{W}$ the $\ZZ^n$-graded
$\widetilde{\SE}$-module
$$\widetilde{W}=
\bigoplus_{\m'\in \M(S)}\Big(
\bigoplus_{\tiny \begin{array}{c} \m\in \M(\SE)\\ \psi_I(\m)=\m'  \end{array}}
W_\m\Big ).
$$
 
 A homomorphism
$\varphi\colon V\to W$ of graded $\SE$-modules then naturally induces a
homomorphism $\widetilde \varphi\colon \widetilde V\to \widetilde W$
of graded $\widetilde{\SE}$-modules.

To summarize, we have described an exact functor from the category of
$\ZZ^{2^q-1}$-graded $\SE$-modules to the category of $\ZZ^n$-graded
$\widetilde{\SE}$-modules.   In particular, if $F_\bullet$
denotes a graded free resolution of $W$ over $\SE$, then $\widetilde
F_\bullet$ denotes the corresponding graded free resolution of
$\widetilde W$ over $\widetilde{\SE}$.

The $\psi_I$-grading allows us to extend the proof of \cite[Theorem
  7.9]{Lr} to \cref{t:upperbound} and give bounds on multigraded
  betti numbers of powers of square-free monomial ideals.

\begin{theorem}[{\bf Extremal ideals bound betti numbers}]\label{t:upperbound}
  Let $q$ and $r$ be positive integers, and let $I$ be an ideal of the
  polynomial ring $S=\sfk[x_1,\ldots,x_n]$ minimally generated by
  $q$ square-free monomials. 
  \begin{enumerate}
\item If $F_\bullet$ is a multigraded free resolution of $\Erq$ over
  $\SE$, then $\widetilde F_\bullet\otimes_{\widetilde{\SE}}S$
  is a multigraded free resolution
  of $I^r$ over $S$, where, in the tensor product, $S$ is regarded as
  a graded $\widetilde{\SE}$-module via the homogeneous homomorphism
  $\psi_I$.

 \item If a simplicial complex $\Delta$ supports a free resolution of
   $\Erq$, then $\psi_I(\Delta)$ supports  a free resolution of $I^r$.
   
\item  If $\m \in \LCM(I^r)$, then $$\displaystyle \beta_{i, \m}(I^r) \le
  \sum_{\tiny \begin{array}{ll}\pme\in \LCM(\Erq)\\
  \psi_I(\pme)=\m
  \end{array}}
  \beta_{i,\pme}(\Erq).$$

  \item (\cite[Theorem 7.9]{Lr}) If $i\ge 0$ then
      $$\beta_i(I^r)\le \beta_i(\Erq).$$
  \end{enumerate}
\end{theorem}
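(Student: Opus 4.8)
The plan is to deduce all four statements from \cref{l:psi} together with the exact functor $W \mapsto \widetilde{W}$ constructed just above, reducing everything to the single claim in part (1). First I would establish (1): given a multigraded free resolution $F_\bullet$ of $\Erq$ over $\SE$, apply the exact functor to get a $\ZZ^n$-graded free resolution $\widetilde{F}_\bullet$ of $\widetilde{\Erq}$ over $\widetilde{\SE}$ (exactness is preserved because the functor is exact, and free modules go to free modules since $\widetilde{\SE(\pme)} = \widetilde{\SE}(\psi_I(\pme))$ by the definition of the $\psi_I$-grading). Then tensor with $S$ over $\widetilde{\SE}$, where $S$ is viewed as a $\widetilde{\SE}$-algebra via the now-homogeneous map $\psi_I$. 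The key point is that $S$ is a \emph{flat} $\widetilde{\SE}$-module — in fact $\psi_I$ realizes $S$ as a polynomial extension followed by a localization/specialization, but more simply one checks flatness directly from \cref{d:psi}, since each variable $x_A$ maps either to a single variable $x_k$ or to $1$, so $\psi_I$ is a composition of a polynomial ring inclusion and a quotient by a difference-of-variables-type regular sequence is \emph{not} quite right; instead I would argue that $S \cong \widetilde{\SE} \otimes_{\sfk} \sfk[\text{extra variables}]$ after renaming, hence free, hence flat — this is essentially the content of \cite[proof of Theorem 7.9]{Lr}. Flatness gives that $\widetilde{F}_\bullet \otimes_{\widetilde{\SE}} S$ remains exact, and $H_0$ of it is $\widetilde{\Erq} \otimes_{\widetilde{\SE}} S = \psi_I(\Erq)S = I^r$ by \cref{l:psi}(ii). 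This proves (1).

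Next, (4) follows from (1) by a standard degree-wise rank count: if $F_i \cong \bigoplus_{\pme} \SE(\pme)^{\beta_{i,\pme}(\Erq)}$ is the $i$-th term of the \emph{minimal} resolution of $\Erq$, then $\widetilde{F}_i \otimes_{\widetilde{\SE}} S$ has $S$-rank $\sum_{\pme} \beta_{i,\pme}(\Erq) = \beta_i(\Erq)$, and since any free resolution of $I^r$ has $i$-th Betti number bounded above by the rank of the $i$-th term of \emph{any} free resolution of $I^r$, we get $\beta_i(I^r) \le \beta_i(\Erq)$. For the multigraded refinement (3), I would instead track the $\ZZ^n$-grading: the generator of $\SE(\pme)$ sits in $\widetilde{\SE}$-degree $\psi_I(\pme)$, so the summand $\SE(\pme)^{\beta_{i,\pme}(\Erq)}$ contributes rank $\beta_{i,\pme}(\Erq)$ to the degree-$\psi_I(\pme)$ part, and collecting terms over all $\pme$ with $\psi_I(\pme) = \m$ gives, as an upper bound for $\beta_{i,\m}(I^r)$, the displayed sum $\sum_{\psi_I(\pme) = \m} \beta_{i,\pme}(\Erq)$. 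Finally, (2) is the special case of (1) where $F_\bullet = F^\Delta_\bullet$ is the homogenization of the simplicial chain complex of $\Delta$: one checks that applying $\widetilde{(-)} \otimes_{\widetilde{\SE}} S$ to $F^\Delta_\bullet$ yields exactly the homogenization $F^{\psi_I(\Delta)}_\bullet$ of the chain complex of $\Delta$ with vertices relabeled by $\bma$, because $\psi_I$ preserves lcm's by \cref{l:psi}(iii), so the differentials match up; then (1) says this complex is a resolution of $I^r$.

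The main obstacle is the flatness (equivalently, faithful flatness or at least exactness of $-\otimes_{\widetilde{\SE}} S$ along $\psi_I$) claim underlying part (1). The map $\psi_I$ is neither injective nor surjective in general — it kills some variables $x_A$ (those with $A \neq A_k$ for all $k$) and may send several variables $x_{A_k}$ with distinct $A_k$ to distinct $x_k$, but could a priori send two variables to the same target if $A_k = A_\ell$; from \cref{d:psi} the product $\prod_{A = A_k} x_k$ over all such $k$ handles exactly that collision, so $\psi_I$ is, after grouping variables, a surjection onto $S$ with kernel generated by a subset of the variables $\{x_A\}$ (those mapping to $1$) — hence $S \cong \widetilde{\SE}/(\text{those } x_A)$, a quotient by a regular sequence of variables, which is \emph{not} flat. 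So the correct argument must be the one actually used in \cite{Lr}: one does not tensor along a quotient map but rather observes that $\SE$ itself is built so that $\psi_I$ factors as $\SE \twoheadrightarrow \SE/(\text{killed } x_A) =: S'$ and $S'$ embeds in or equals $S$ after the variable-merging, and the point is that $\Erq$ and its resolution, being generated in degrees supported away from the killed variables in the appropriate sense, are \emph{unaffected} by the quotient — i.e. $F_\bullet \otimes_{\SE} S'$ stays exact because the killed variables form a regular sequence on every module appearing (they are a regular sequence on $\SE$, hence on free modules, and the resolution stays a resolution). I expect the clean way to write this is to invoke \cite[Theorem 7.9 and its proof]{Lr} directly for the exactness input and simply note that the $\psi_I$-grading makes the map homogeneous, so that the resolution obtained is multigraded, which is the only new content of \cref{t:upperbound} beyond \cite{Lr}.
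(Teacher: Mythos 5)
Your overall scaffolding matches the paper's: everything is reduced to part (1), which in turn rests on the exactness of $-\otimes_{\widetilde{\SE}} S$ applied to a resolution of $\widetilde{\Erq}$; then (2) uses \cref{l:psi}(iii) to identify $\widetilde{F^\Delta_\bullet}\otimes_{\widetilde{\SE}}S$ with $F^{\psi_I(\Delta)}_\bullet$, and (3), (4) are degree-wise rank counts. Your final fallback — invoke the exactness from \cite[Lemma~7.8]{Lr} and observe that the $\psi_I$-grading makes the maps homogeneous — is exactly what the paper does; the grading refinement is the only new content, and the key fact is $\Tor_i^{\widetilde{\SE}}(\widetilde{\SE}/\widetilde{\Erq},S)=0$ for $i>0$, which is a far weaker condition than flatness and is specific to this pair of modules.

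Your attempted self-contained justification of (1), however, has a concrete gap. You correctly abandon the flatness claim, but then assert that ``$\psi_I$ is, after grouping variables, a surjection onto $S$ with kernel generated by a subset of the variables, hence $S\cong\widetilde{\SE}/(\text{those }x_A)$.'' This is false in general: from \cref{d:psi}, if two indices $k\ne\ell$ have $A_k=A_\ell$ then $\psi_I(x_{A_k})=x_kx_\ell\cdots$ is a nontrivial product, $x_k$ alone is not in the image, and $\psi_I$ is not surjective. So $S$ is not a quotient of $\widetilde{\SE}$, and the ``regular sequence of killed variables'' argument does not directly apply. One could salvage a direct argument by factoring $\psi_I$ as a variable-killing quotient $\widetilde{\SE}\twoheadrightarrow S'$ followed by a monomial-substitution inclusion $S'\hookrightarrow S$ that makes $S$ free over $S'$, and then checking the killed variables are regular on $\widetilde{\SE}/\widetilde{\Erq}$ — but that is essentially reproving \cite[Lemma~7.8]{Lr}, which the paper simply cites. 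If you want to write the proof self-containedly, you should either reproduce that lemma's argument or keep the citation; the intermediate claims as written do not hold.
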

 
\begin{proof}
As in  \cite[Lemma~7.8]{Lr}, with the added improvement of working in the appropriate $\ZZ^n$-graded setting given by the $\psi_I$-grading,  we have
  \begin{equation}\label{e:Tors}
   \widetilde{S_{\E}}/\widetilde{\Erq}\otimes_{\widetilde S_{\E}}S\cong S/I^r\quad\text{and}\quad
  \Tor_i^{\widetilde S_{\E}}(\widetilde{S_{\E}}/{\widetilde\Erq}, S)=0 \qforall i>0.
  \end{equation}
Then (1) follows from here, by computing the $\Tor$ modules using $\widetilde F_\bullet$. 

(2)  Let $F^\Delta_\bullet$,
respectively $F^{\psi_I(\Delta)}_\bullet$, denote the homogenization of
the chain complex of $\Delta$, respectively $\psi_I(\Delta)$. Then
\cref{l:psi} shows that $\widetilde{F^\Delta}_\bullet\otimes_{\widetilde{\SE}}S\cong
F^{\psi_I(\Delta)}_\bullet$. Then (1) implies that $ F^{\psi_I(\Delta)}_\bullet$ is a free resolution of $I^r$. 

Parts (3) and (4) follow directly from (1), by using the free resolution $\widetilde F_\bullet\otimes_{\widetilde{\SE}}S$ of $I^r$ to give upper bounds on the betti numbers of $I^r$. Note that the summation on the right-hand side of the inequality in (3) is equal to $\beta_{i,\bm}(\widetilde{\Erq})$. 
\end{proof}

\cref{t:upperbound} and \cref{p:psi-Scarf} provide motivation to focus our
attention on finding simplicial complexes that support resolutions of
$\Erq$. All evidence points to $\Erq$ being a Scarf ideal for all $q$
and $r$, which means that we expect $\Srq$ to support a minimal free
resolution of $\Erq$. For this reason, significant effort will be spent in later sections on describing the faces of $\Srq$. 

In~\cite{Lr}, the authors described a simplicial complex $\Lrq$ which
supports a free resolution of $I^r$ for any ideal $I$ generated by $q$
square-free monomials in a polynomial ring. In particular, $\Lrq$
supports a free resolution of $\Erq$, and therefore we have the
following inclusions of labelled simplicial complexes (see, for
example, \cite{Mermin}).
\begin{equation}\label{e:first-inclusions}
  \Srq \subseteq \Lrq \subseteq \Trq.
\end{equation}

The complex $\Lrq$ is significantly smaller than $\Trq$, and supports
a minimal resolution when $r=2$.

\begin{definition}[{\bf $\LL^2_q$~\cite{L2, Lr}}]
\label{L2} 
$\LL^2_q$ is the subcomplex of $\TT^{2}_{q}$ with facets 
$$
\{\e_i\e_j\st i\ne j, i,j\in [q]\}\qand
\{\e_i\e_j\st j\in [q]\} \qforeach \quad i \in [q]
\qwhen q>2,$$ and facets
$$\{\e_i\e_j\st j\in [q]\} \qforeach \quad i \in [q] \qwhen q\le 2.$$
\end{definition}

\begin{example}[$\LL^2_3$ vs $\TT^2_3$] 
\label{e:L2-picture}
The complex $\LL^2_3$ corresponding to $q=3$, $r = 2$, and shown in
\cref{f:L32} supports a minimal free resolution of $\E_3^2 =
(\e_1, \e_2, \e_3)^2$, where $\e_i =x_i x_{ij} x_{ik} x_{ijk}$, for
$i,j,k$ with $\{i, j, k\}=\{1,2,3\}$. In contrast, the 5-simplex
$\TT^2_3$ is a five-dimensional polytope with six vertices, fifteen
edges, and twenty triangles.

\begin{figure}
\begin{tikzpicture}
\tikzstyle{point}=[inner sep=0pt]
\node (a)[point,label=above:$\e_1\e_2$] at (0,1) {};
\node (b)[point,label=left:$\e_1\e_3$] at (-1,0) {};
\node (c)[point,label=right:$\e_2\e_3$] at (1,0) {};
\node (d)[point,label=left:$\e_1^2$] at (-1.5,1.5) {};
\node (e)[point,label=right:$\e_2^2$] at (1.5,1.5) {};
\node (f)[point,label=right:$\e_3^2$] at (0,-1) {};
\draw [fill=gray!20](a.center) -- (b.center) -- (c.center);
\draw [fill=gray!20](a.center) -- (b.center) -- (d.center);
\draw [fill=gray!20](a.center) -- (c.center) -- (e.center);
\draw [fill=gray!20](b.center) -- (c.center) -- (f.center);
\draw (a.center) -- (b.center);
\draw (a.center) -- (c.center);
\draw (a.center) -- (d.center);
\draw (a.center) -- (e.center);
\draw (b.center) -- (c.center);
\draw (b.center) -- (d.center);
\draw (b.center) -- (f.center);
\draw (c.center) -- (e.center);
\draw (c.center) -- (f.center);
\draw[black, fill=black] (a) circle(0.05);
\draw[black, fill=black] (b) circle(0.05);
\draw[black, fill=black] (c) circle(0.05);
\draw[black, fill=black] (d) circle(0.05);
\draw[black, fill=black] (e) circle(0.05);
\draw[black, fill=black] (f) circle(0.05);
\end{tikzpicture}\caption{$\LL^2_3$}\label{f:L32} 
\end{figure}

\end{example}

\begin{proposition}[\cite{Lr}]
\label{L2-supports}
$\TT^{1}_{q}=\Taylor(\E_q)$ supports a minimal free resolution of $\E_q$ and $\LL^2_q$ supports a minimal free resolution of ${\E_q}^2$. 
\end{proposition}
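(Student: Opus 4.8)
For $\Taylor(\E_q)$, the plan is to show that distinct faces carry distinct monomial labels, which forces the Taylor resolution to be minimal. For a nonempty $\sigma\subseteq[q]$ one has $\m_\sigma=\lcm(\e_i\st i\in\sigma)=\prod_{A\cap\sigma\ne\emptyset}x_A$, since $x_A\mid\e_i$ exactly when $i\in A$; hence the variables that do \emph{not} divide $\m_\sigma$ are precisely the $x_A$ with $\emptyset\ne A\subseteq[q]\ssm\sigma$, and this family of subsets determines $[q]\ssm\sigma$, hence $\sigma$. So no two faces of $\Taylor(\E_q)$ share a label, and \cref{l:expansion} then yields that every face of $\Taylor(\E_q)$ lies in $\Scarf(\E_q)$, i.e. $\Scarf(\E_q)=\Taylor(\E_q)$. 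Since the Scarf chain complex embeds in the minimal free resolution, which in turn is a subcomplex of the Taylor complex as in \eqref{e:guide}, the minimal free resolution must have the same ranks as both; therefore $\Taylor(\E_q)$ supports a minimal free resolution of $\E_q$.

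For ${\E_q}^2$, the plan is to build a homogeneous acyclic matching $\A$ on $G_{\TT^2_q}$ whose $\A$-critical cells are exactly the faces of $\LL^2_q$, and then apply \cref{t:BW} and \cref{r:BW}. The vertices of $\TT^2_q$ are indexed by $\ba\in\N^2_q$ (\cref{p:vertices}); I would partition the faces of $\TT^2_q$ by their monomial labels $\m\in\LCM({\E_q}^2)$, observe that a subface is partitioned at a label dividing that of its superface, and apply the Cluster Lemma (\cref{clusterlemma}) over the poset $\LCM({\E_q}^2)$. On each fiber $\{\sigma\st\m_\sigma=\m\}$ one builds a local acyclic matching by iterating \cref{matching-lemma}, peeling off a carefully chosen vertex — the choice dictated by the explicit facet description of $\LL^2_q$ in \cref{L2} — so that the fibers over labels not occurring on $\LL^2_q$ are matched completely, while the fibers over the remaining labels have the faces of $\LL^2_q$ carrying that label as their only unmatched faces; gluing these via \cref{clusterlemma} produces $\A$. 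Then \cref{t:BW} gives a CW complex $\X_\A$ supporting a free resolution of ${\E_q}^2$ with cells in label-preserving bijection with the faces of $\LL^2_q$, and tracing the gradient paths \eqref{e:gradient} in $G^\A_{\TT^2_q}$ recovers the cell-attachment data, identifying $\X_\A$ with $\LL^2_q$, so that $\LL^2_q$ supports a free resolution. Finally, a direct computation with \cref{L2} shows $\pmea\nmid\m_{\sigma\ssm\{\ba\}}$ for every face $\sigma$ of $\LL^2_q$ and every vertex $\ba\in\sigma$; this both identifies $\LL^2_q$ with $\Scarf({\E_q}^2)$, so \cref{r:BW} applies, and directly shows the homogenized complex $F^{\LL^2_q}_\bullet$ is minimal.

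A more topological variant of the ${\E_q}^2$ argument bypasses Morse theory by using the standard criterion that a simplicial complex $\Delta$ whose vertices are labeled by the generators of a monomial ideal $J$ supports a free resolution of $J$ precisely when $\Delta_{\le\m}=\{\sigma\in\Delta\st\m_\sigma\mid\m\}$ is empty or $\sfk$-acyclic for every monomial $\m$: here $\LL^2_q$ is the union of the simplex on the ``mixed'' vertices $\{\e_i\e_j\st i\ne j\}$ with, for each $i\in[q]$, the simplex on $\{\e_i\e_j\st j\in[q]\}$, the latter glued to the former along the facet $\{\e_i\e_j\st j\ne i\}$ and meeting the other such simplices only in single vertices, and a direct gluing argument shows $\LL^2_q$ and each $\Delta_{\le\m}$ is contractible, with minimality again coming from the label comparison above. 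In either approach the main obstacle is the bookkeeping for ${\E_q}^2$: one must describe $\LCM({\E_q}^2)$ explicitly and verify, for every $\m$, that $(\LL^2_q)_{\le\m}$ is acyclic — equivalently, that the proposed matching on $\TT^2_q$ is acyclic with critical cells exactly $\LL^2_q$ — which is the work carried out in \cite{L2}; the remainder of the argument is formal, and the first assertion of the proposition is immediate from the label computation.
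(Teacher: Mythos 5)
Your argument for the first assertion is complete and correct. The label computation $\m_\sigma=\prod_{A\cap\sigma\ne\emptyset}x_A$ shows that distinct faces of $\Taylor(\E_q)$ carry distinct labels, which alone forces the homogenized Taylor differential to be minimal; the detour through $\Scarf(\E_q)$ is unnecessary but harmless. The paper itself does not prove this locally but instead appeals to $\E_q$ being the nearly Scarf ideal of a $q$-simplex in the sense of \cite{PV}; your direct verification is an equivalent route.

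For ${\E_q}^2$ you have offered a plan rather than a proof. Both of your sketches --- the Morse matching partitioned over $\LCM({\E_q}^2)$ via the Cluster Lemma, and the Bayer--Sturmfels acyclicity criterion for the restrictions $(\LL^2_q)_{\le\m}$ --- reduce to the same nontrivial verification, which you explicitly leave to \cite{L2}: that for each multidegree $\m$ the subcomplex $(\LL^2_q)_{\le\m}$ is acyclic, or equivalently that your proposed matching on $\TT^2_q$ is acyclic with critical cells exactly $\LL^2_q$. That verification is the entire substance of the second assertion; the surrounding machinery (the Cluster Lemma bookkeeping, the contractibility of $\LL^2_q$ itself) is formal. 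Since the paper states the proposition purely as a citation to \cite{Lr} (which in turn rests on \cite{L2}), your outline is consistent with the level of detail the paper offers, but it is not a self-contained proof.

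One logical slip worth flagging: you assert that verifying $\pmea\nmid\m_{\sigma\ssm\{\pmea\}}$ for every face $\sigma$ of $\LL^2_q$ and every vertex $\pmea\in\sigma$ ``identifies $\LL^2_q$ with $\Scarf({\E_q}^2)$.'' It does not. That verification is condition (1) of \cref{l:expansion} and gives minimality of the supported resolution, but Scarf membership of a face $\sigma$ also requires condition (2), namely $\m_{\sigma\cup\{v\}}\ne\m_\sigma$ for every vertex $v$ of $\TT^2_q$ outside $\sigma$. The paper establishes $\LL^2_q=\UU^2_q=\SS^2_q$ by a different route (\cref{U=L}, \cref{t:f-vector}, \cref{c:r=2-Scarf}), relying on the separately proved inclusion $\UU^2_q\subseteq\SS^2_q$. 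This error does not affect the proposition as stated, since it does not assert the Scarf identification, but the reasoning as written is incorrect and should not be carried forward.
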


However, as $r$ and $q$ grow, the free resolution supported on $\Lrq$,
while still much smaller than the Taylor resolution, veers farther
away from a minimal resolution.

\section{{\bf \large  Faces of the Scarf complex $\Srq$}} 
\label{s:faces}

The focus of this paper is understanding the Scarf complex $\Srq$ of
$\Erq$.  The first step, as taken in this section, is to understand
the monomial labels of the Taylor complex, and describe the faces of
the Scarf complex. We will be using notation from \cref{n:rqs}.

\begin{lemma}[{\bf Divisibility conditions I}]
\label{l:when-divide} 
 Let  $d, q, r \geq 1$ and $\ba_k=(a_{k1}, \dots, a_{kq}),\bb=(b_1, \dots, b_q)\in \Nrq$, for $0\le k\le d$. Then 
 \begin{equation}
 \label{lcm}
 \lcm(\pme^{\ba_1}, \pme^{\ba_2}, \dots, \pme^{\ba_d})=\prod_{\emptyset\ne A\subseteq [q]} {(x_A)}^{\max \left(\sum_{i\in A}a_{1i}, \sum_{i\in A}a_{2i}, \dots, \sum_{i\in A}a_{di}\right)}
 \end{equation}
and 
\begin{equation}
\label{equiv}
\pmeb\mid \lcm(\pme^{\ba_1}, \pme^{\ba_2}, \dots, \pme^{\ba_d})\iff \sum_{i\in A} b_i\le \max \left(\sum_{i\in A}a_{1i}, \dots, \sum_{i\in A}a_{di}\right)\quad \text{ for all\, $\emptyset \ne A\subseteq [q]$.}
\end{equation}
In particular, if $\pmeb\mid \lcm(\pme^{\ba_1}, \pme^{\ba_2}, \dots, \pme^{\ba_d})$,  then 
\begin{equation}
\label{ineq}
\min(a_{1j}, a_{2j}, \dots, a_{dj})\le b_j\le \max(a_{1j}, a_{2j}, \dots, a_{dj})\qquad\text{for all $j\in [q]$.} 
 \end{equation}
\end{lemma}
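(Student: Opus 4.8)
The plan is to compute the exponent of each variable $x_A$ in the relevant monomials and then read off the three claimed statements. First I would recall from \cref{d:extremal} that $\e_i = \prod_{A \ni i} x_A$, so that for a vector $\bc = (c_1,\dots,c_q) \in \NN^q$ one has $\pme^{\bc} = \prod_i \e_i^{c_i} = \prod_{\emptyset \ne A \subseteq [q]} (x_A)^{\sum_{i \in A} c_i}$; in other words, the $x_A$-exponent of $\pme^{\bc}$ is $\sum_{i\in A} c_i$. Since the exponent of a variable in an lcm of monomials is the maximum of its exponents in those monomials, applying this to $\bc = \ba_1,\dots,\ba_d$ and taking the maximum over $k$ immediately gives \eqref{lcm}.

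For \eqref{equiv}, I would use the standard fact that for monomials $u,v$ one has $v \mid u$ if and only if the exponent of every variable in $v$ is at most its exponent in $u$. Applying this with $v = \pme^{\bb}$ and $u = \lcm(\pme^{\ba_1},\dots,\pme^{\ba_d})$, whose $x_A$-exponents were just identified as $\sum_{i \in A} b_i$ and $\max_k \sum_{i\in A} a_{ki}$ respectively, yields exactly the stated equivalence, the quantifier over $\emptyset \ne A \subseteq [q]$ matching the range of variables of $\SE$.

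Finally, for \eqref{ineq}, I would specialize \eqref{equiv} to the singleton sets $A = \{j\}$ to get the upper bound $b_j \le \max(a_{1j},\dots,a_{dj})$ for each $j$. The lower bound $\min(a_{1j},\dots,a_{dj}) \le b_j$ then follows by a counting/averaging argument using $|\bb| = |\ba_k| = r$: summing the already-established upper bounds $b_j \le \max_k a_{kj}$ over $j$ in $[q] \ssm \{j_0\}$ and subtracting from $r = \sum_j b_j$ gives $b_{j_0} \ge r - \sum_{j \ne j_0} \max_k a_{kj}$; since each $\sum_j a_{kj} = r$, we have $\sum_{j \ne j_0} \max_k a_{kj} \le r - \min_k a_{kj_0}$ (bounding the max in each coordinate by its value along a fixed $k$ and choosing that $k$ to realize the minimum at $j_0$), which rearranges to $b_{j_0} \ge \min_k a_{kj_0}$.

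The only mildly delicate point — and the one I would expect to be the main obstacle to getting fully right — is the lower bound in \eqref{ineq}: the upper bounds and the lcm formula are purely formal exponent bookkeeping, but the lower bound genuinely needs the hypothesis $\ba_k, \bb \in \Nrq$ (equal $\ell^1$-norm $r$) and a small combinatorial inequality relating $\sum_j \max_k a_{kj}$ to the common sum $r$. I would state that inequality cleanly first (for any finitely many vectors in $\NN^q$ of common coordinate-sum $r$, one has $\sum_{j \ne j_0} \max_k a_{kj} + \min_k a_{kj_0} \le r$, seen by fixing $k_0$ attaining $\min_k a_{k j_0}$ and using $\max_k a_{kj} \le$ nothing uniform — so instead use duality via the $A$-sums) and then feed it in. An alternative, perhaps cleaner route avoiding this: apply \eqref{equiv} to the complementary set $A = [q] \ssm \{j_0\}$, giving $\sum_{j \ne j_0} b_j \le \max_k \sum_{j \ne j_0} a_{kj} = \max_k (r - a_{kj_0}) = r - \min_k a_{kj_0}$, and since $\sum_{j\ne j_0} b_j = r - b_{j_0}$ this rearranges directly to $b_{j_0} \ge \min_k a_{kj_0}$. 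I would use this complementary-set argument, as it is the shortest and makes the role of the common norm $r$ transparent.
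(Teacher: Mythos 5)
Your proposal is correct and, in its final (preferred) form, follows essentially the same route as the paper: the lcm formula and divisibility criterion are read off from the $x_A$-exponent bookkeeping, the upper bound in \eqref{ineq} comes from $A=\{j\}$, and the lower bound comes from the complementary set $A=[q]\ssm\{j\}$ together with the identities $b_j = r - \sum_{i\ne j} b_i$ and $a_{kj} = r - \sum_{i\ne j} a_{ki}$. Your first sketch of the lower bound (the ``counting/averaging'' version) is somewhat tangled and you rightly discard it; the complementary-set argument you settle on is exactly the paper's.
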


    \begin{proof}
    Formula \eqref{lcm} follows directly from the definition of the monomials
    $\e_i$, and \eqref{equiv} is an immediate consequence of
    \eqref{lcm}.
 
    We now prove \eqref{ineq}, assuming that $\pmeb\mid
    \lcm(\pme^{\ba_1}, \pme^{\ba_2}, \dots, \pme^{\ba_d})$.  Let $j\in
        [q]$. Note that the inequality on the right in \eqref{ineq} follows by
        using \eqref{equiv} with $A=\{j\}$. The inequality on the
        left follows by using \eqref{equiv} with
        $A=[q]\ssm \{j\}$, and noting that
      $$b_j=r-\sum_{i\in
          [q]\ssm\{j\}}b_i\qquad\text{and}\qquad
        a_{kj}=r-\sum_{i\in [q]\ssm\{j\}}a_{ki}\,. \qedhere
      $$
     \end{proof}

As a result, we can  characterize all the faces of  $\Srq$.

\begin{proposition}[{\bf The faces of $\Srq$}]
\label{p:faces}
Let $d, q, r \geq 1 $ and $\sigma=\{\pme^{\ba_1}, \pme^{\ba_2}, \dots,
\pme^{\ba_d}\}$ be an $d$-dimensional face of $\Trq$. Let $P_{\sigma}$
denote the polyhedron consisting of all $(b_1, \dots, b_q)\in \RR^q$
such that
$$
\begin{cases}
\sum_{i=1}^q b_i=r\\
\sum_{i\in A} b_i\le \max(\sum_{i\in A}a_{1i}, \sum_{i\in A}a_{2i}, \dots, \sum_{i\in A}a_{di}) \qquad \text{for all $ A\subseteq [q]$.}
\end{cases}
$$ 
Then $\sigma\in \Srq$ if and only if $P_{\sigma'}\cap \NN^q=\{\ba_j\st \pme^{\ba_j}\in \sigma'\}$ for all $\sigma'\subseteq \sigma$. 
\end{proposition}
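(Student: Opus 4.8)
The plan is to combine the combinatorial characterization of Scarf faces from \cref{l:expansion} with the divisibility translation provided by \cref{l:when-divide}. Recall that \cref{p:faces} essentially asks for a reformulation of ``$\sigma$ shares no monomial label with any other face of $\Trq$'' in terms of the lattice points of the polyhedra $P_{\sigma'}$. The key dictionary item is that, by \eqref{equiv}, for any $\bb\in\Nrq$ we have $\pme^{\bb}\mid\m_{\sigma'}$ if and only if $\bb\in P_{\sigma'}\cap\NN^q$ (note the constraint $\sum b_i=r$ forces the inequality for $A=[q]$ to be an equality, which matches the definition of $P_{\sigma'}$, and the inequalities for all $A\subsetneq[q]$ are exactly those in the display). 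So $P_{\sigma'}\cap\NN^q$ is precisely the set of vertices $\bb$ of $\Trq$ whose monomial label divides $\m_{\sigma'}$.

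First I would establish the forward direction. Suppose $\sigma\in\Srq$. Then every subface $\sigma'\subseteq\sigma$ also lies in $\Srq$ (the Scarf complex is a subcomplex of $\Trq$, closed under taking faces). Fix such a $\sigma'$; clearly $\{\ba_j : \pme^{\ba_j}\in\sigma'\}\subseteq P_{\sigma'}\cap\NN^q$ since each generator of $\sigma'$ divides its own lcm. For the reverse inclusion, take $\bb\in P_{\sigma'}\cap\NN^q$, so $\pme^{\bb}\mid\m_{\sigma'}$. Consider the face $\sigma'\cup\{\pme^{\bb}\}$ of $\Trq$: its label is $\lcm(\m_{\sigma'},\pme^{\bb})=\m_{\sigma'}$. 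If $\pme^{\bb}\notin\sigma'$, then $\sigma'\cup\{\pme^{\bb}\}$ is a strictly larger face sharing a label with $\sigma'$, contradicting $\sigma'\in\Srq$ via condition (2) of \cref{l:expansion}. Hence $\pme^{\bb}\in\sigma'$, i.e.\ $\bb\in\{\ba_j:\pme^{\ba_j}\in\sigma'\}$, giving equality.

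For the converse, assume $P_{\sigma'}\cap\NN^q=\{\ba_j:\pme^{\ba_j}\in\sigma'\}$ for all $\sigma'\subseteq\sigma$. I would verify that every subface $\sigma'$ satisfies conditions (1) and (2) of \cref{l:expansion}, which by that lemma is equivalent to $\sigma'\in\Srq$; applying this to $\sigma'=\sigma$ gives $\sigma\in\Srq$. For condition (2): if some vertex $\pme^{\bb}\in\Trq\ssm\sigma'$ had $\m_{\sigma'\cup\{\pme^{\bb}\}}=\m_{\sigma'}$, then $\pme^{\bb}\mid\m_{\sigma'}$, so $\bb\in P_{\sigma'}\cap\NN^q=\{\ba_j:\pme^{\ba_j}\in\sigma'\}$, forcing $\pme^{\bb}\in\sigma'$, a contradiction. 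For condition (1): suppose $\m_{\sigma'}=\m_{\sigma'\ssm\{\pme^{\ba_j}\}}$ for some $j$ with $\pme^{\ba_j}\in\sigma'$. Set $\tau=\sigma'\ssm\{\pme^{\ba_j}\}$; then $\tau$ is itself a subface of $\sigma$, and $\pme^{\ba_j}\mid\m_{\sigma'}=\m_{\tau}$, so $\ba_j\in P_{\tau}\cap\NN^q=\{\ba_k:\pme^{\ba_k}\in\tau\}$. But $\pme^{\ba_j}\notin\tau$ by construction, a contradiction. Thus both conditions hold for every $\sigma'\subseteq\sigma$, completing the proof.

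I do not anticipate a serious obstacle here: the statement is essentially a repackaging, and the only point requiring a little care is making sure the polyhedron $P_{\sigma'}$ (with its equality $\sum b_i=r$ and inequalities over \emph{all} $A\subseteq[q]$, including $A=[q]$ which is automatically satisfied, and $A=\emptyset$ which is vacuous) matches exactly the divisibility condition \eqref{equiv} for vertices of $\Trq$ — together with invoking \cref{p:vertices} so that the lattice point $\bb$ genuinely corresponds to the distinct vertex $\pme^{\bb}$ of $\Trq$. The mild subtlety worth spelling out is that the hypothesis must be used not just for $\sigma'=\sigma$ but for \emph{all} subfaces, since \cref{l:expansion}'s condition (1) for $\sigma$ refers to the labels of its facets $\sigma\ssm\{v\}$, and verifying those requires knowing the structure of $P_{\sigma\ssm\{v\}}$ as well.
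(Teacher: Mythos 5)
Your proof is correct and follows essentially the same route as the paper's: translate membership in $P_{\sigma'}\cap\NN^q$ into divisibility via \eqref{equiv}, then use \cref{l:expansion}. The paper's version is terser (stating the equivalence of the polyhedral condition with \eqref{e:rewrite} and invoking \cref{l:expansion} in one step), but the content — including the subtlety that condition (1) of \cref{l:expansion} for $\sigma$ is recovered from condition (2) applied to facets $\sigma\ssm\{v\}$ — is exactly what you spell out.
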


\begin{proof}
Use \cref{equiv} to write
$$
 P_{\sigma'}\cap \NN^q=\{\bb\in \Nrq\st \pme^{\bb}\mid \m_{\sigma'}\}\,.
$$
Thus the condition that  $P_{\sigma'}\cap \NN^q=\{\ba_j\st \pme^{\ba_j}\in \sigma'\}$ for all $\sigma'\subseteq \sigma$ is equivalent to: 
\begin{equation}
\label{e:rewrite}
\text{If $\sigma'\subseteq \sigma$ and  $\pme^{\bb}\mid \m_{\sigma'}$ for some $\bb\in \Nrq$, then $\pme^\bb\in \sigma'$. }
\end{equation}
Since $\pme^\bb\mid \m_{\sigma'}$ if and only if $\m_{\sigma'}=\m_{\sigma'\cup \{\pme^\bb\}}$, we then use \cref{l:expansion} to see that \eqref{e:rewrite} is equivalent to $\sigma\in \Srq$. 
\end{proof}

Next we show that $\Srq$ inherits the faces of $\SS^{r'}_q$ for $r'<r$.
For $r'\in [r]$ and $\ba \in \N_q^{r'}$ and $\sigma\in \TT^{r-r'}_{q}$
we define
\begin{equation}\label{e:e-sigma}
\pmea\sigma=\{\pmea\cdot v\st v\in \sigma\}\in  \TT^r_q. 
\end{equation}
If $Y$ is a subcomplex of $\Trq$, then $\pmea Y$ denotes the subcomplex of $\Trq$ with faces $\pmea\sigma$, where $\sigma\in Y$. 

\begin{proposition}[{\bf Scarf faces}]\label{p:Scarf-face}
  Let $q\ge 1$, $1 \le r'< r$, and $\ba \in \N_q^{r'}$.
  Then $$\sigma\in \SS^{r-r'}_{q}  \iff \pmea\sigma\in \Srq.$$  
 In particular $\pmea \SS^{r-r'}_{q} \subseteq \Srq.$
  \end{proposition}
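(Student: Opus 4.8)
The plan is to translate the Scarf-membership condition for $\pmea\sigma$ (via \cref{p:faces}) into the Scarf-membership condition for $\sigma$ in $\SS^{r-r'}_q$, by showing that the defining polyhedra $P_{\pmea\tau}$ and $P_\tau$ differ by a simple affine shift by $\ba$. First I would fix a face $\tau=\{\pme^{\bc_1},\ldots,\pme^{\bc_d}\}\in\TT^{r-r'}_q$, so $\pmea\tau=\{\pme^{\ba+\bc_1},\ldots,\pme^{\ba+\bc_d}\}\in\Trq$. For any $A\subseteq[q]$, since $\sum_{i\in A}(a_i+c_{ki})=\sum_{i\in A}a_i+\sum_{i\in A}c_{ki}$, the maximum over $k$ of the left side equals $\sum_{i\in A}a_i$ plus the maximum over $k$ of $\sum_{i\in A}c_{ki}$. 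Hence the inequality system defining $P_{\pmea\tau}$ is obtained from the one defining $P_\tau$ by replacing $b_i$ with $b_i-a_i$ (and $r-r'$ with $r$ in the sum constraint). In other words, the translation map $\bb\mapsto\ba+\bb$ is an affine bijection $\RR^q\to\RR^q$ carrying $P_\tau$ bijectively onto $P_{\pmea\tau}$.

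The key point is that this bijection also respects integrality in the relevant direction: since $\ba\in\NN^q$, the map $\bb\mapsto\ba+\bb$ sends $\NN^q$ into $\NN^q$, and conversely if $\ba+\bb\in\NN^q$ and $\ba+\bb\in P_{\pmea\tau}$ then the inequalities $\min_k c_{ki}\le b_i+a_i\le\max_k(a_i+c_{ki})$ coming from \eqref{ineq} force $b_i\ge\min_k c_{ki}\ge 0$, so $\bb\in\NN^q$ as well. (Here I use that every $\bc_k$ lies in $\N_q^{r-r'}\subseteq\NN^q$.) Therefore the translation restricts to a bijection $P_\tau\cap\NN^q\;\xrightarrow{\ \sim\ }\;P_{\pmea\tau}\cap\NN^q$, and under this bijection the set $\{\bc_k\st\pme^{\bc_k}\in\tau\}$ corresponds to the set $\{\ba+\bc_k\st\pme^{\ba+\bc_k}\in\pmea\tau\}$.

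With this correspondence in hand, I would run \cref{p:faces} on both sides. Note that the subfaces $\sigma'\subseteq\sigma$ are exactly the sets of the form $\{\pme^{\bc_k}\st k\in T\}$ for $T\subseteq[d]$, and correspondingly the subfaces of $\pmea\sigma$ are exactly $\pmea\sigma'$ for $\sigma'\subseteq\sigma$ (multiplication by the fixed monomial $\pmea$ is injective on vertices by \cref{p:vertices}). So: $\sigma\in\SS^{r-r'}_q$ iff $P_{\sigma'}\cap\NN^q=\{\bc\st\pme^\bc\in\sigma'\}$ for all $\sigma'\subseteq\sigma$, which by the translation bijection holds iff $P_{\pmea\sigma'}\cap\NN^q=\{\ba+\bc\st\pme^{\ba+\bc}\in\pmea\sigma'\}$ for all $\sigma'\subseteq\sigma$, which is exactly the condition $\pmea\sigma\in\Srq$ from \cref{p:faces}. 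This proves the equivalence, and the inclusion $\pmea\SS^{r-r'}_q\subseteq\Srq$ is the forward implication applied to each face.

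I expect the only real subtlety — the "hard part," though it is minor — to be the integrality bookkeeping in the second paragraph: one must be careful that the affine shift by $\ba$ really does match lattice points with lattice points on \emph{both} polyhedra, rather than just matching the underlying real polyhedra. This is where the hypothesis $\ba\in\NN^q$ (not merely $\ba\in\ZZ^q$) and the lower bounds from \eqref{ineq} are used; everything else is a direct unwinding of \cref{p:faces}.
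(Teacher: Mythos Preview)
Your argument is correct and takes a genuinely different route from the paper. The paper proves the statement by induction on $r'$, reducing to the case $\pmea=\e_1$, and then argues directly from the definition of the Scarf complex: given $\rho\in\Trq$ with $\m_\rho=\m_{\e_1\sigma}$, one shows by a contradiction involving the variable $x_{\Supp(\bb_j)}$ that every vertex of $\rho$ is divisible by $\e_1$, so $\rho=\e_1\rho'$ with $\m_{\rho'}=\m_\sigma$, forcing $\rho'=\sigma$. Your approach instead leverages the polyhedral characterization in \cref{p:faces} and observes that $P_{\pmea\tau}$ is the translate $\ba+P_\tau$, with the crucial check (using the lower bound from \eqref{ineq}) that this translation restricts to a bijection on lattice points. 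Your argument is cleaner in that it handles general $\ba$ in one stroke without induction and makes the geometric content transparent; the paper's argument is more self-contained in that it does not rely on \cref{p:faces} and exposes precisely which variable $x_A$ obstructs a non-divisible vertex from appearing. Both are short, and the ``hard part'' you flag---that the shift by $\ba$ matches lattice points in both directions---is exactly the substitute for the paper's divisibility contradiction.
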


\begin{proof} We will prove the statement in the case when $\pmea=\e_i$ for some $i$. The general statement will then follow by induction. 

Without loss of generality, assume $i=1$. Let $\sigma = \{\pme^{\ba_1}, \dots, \pme^{\ba_t}\}$, where each $\ba_i
\in \N_q^{r-1}$. Then
$$\e_1\sigma = \{\e_1 \pme^{\ba_1}, \dots,
\e_1\pme^{\ba_t}\} \qand \m_{\e_1 \sigma} = \e_1 \m_{\sigma}.$$
Assume that $\sigma\in  \SS^{r-1}_{q}$.  Let $\bb_1,\ldots,\bb_h \in \Nrq$ and $$\rho = \{\pme^{\bb_1},
\dots, \pme^{\bb_h}\} \in \Trq \quad\text{such that}\quad  \m_{\rho} =
\m_{\e_1\sigma}=\e_1\m_{\sigma}.$$ 
To argue $\e_1\sigma\in \Srq$, one must show $\rho=\e_1\sigma$. We claim that 
$$\e_1 \mid
\pme^{\bb_j} \qforeach\,  j \in [h].$$
Suppose this is not the case, and for some $j \in [h]$,  $\e_1 \nmid
\pme^{\bb_j}$. In other words, for some positive integers
$b_1,\ldots,b_k$, $$\pme^{\bb_j}={\e_{d_1}}^{b_1} \cdots
    {\e_{d_k}}^{b_k} \qwhere 1 < d_1 < \ldots <d_k \leq q.$$ Since
    $x_{\{d_1,\ldots,d_k\}}\mid \e_{d_z}$ for each $z\in [k]$, and
    since $b_1 + \cdots + b_k=r$, we must then
    have $${x_{\{d_1,\ldots,d_k\}}}^{r} \mid \pme^{\bb_j} \mid
    \m_{\rho}=\e_1\m_\sigma.$$ On the other hand, since $1 \notin
    \{d_1,\ldots,d_k\}$,  we have $x_{\{d_1,\ldots,d_k\}}\nmid \e_1$ and hence 
$${x_{\{d_1,\ldots,d_k\}}}^r \nmid \e_1\m_\sigma,$$
a contradiction. Therefore, for each $z\in [h]$ one can write $\pme^{\bb_z}=\e_1\pme^{\bb'_z}$ where
$\bb'_z \in \N_q^{r-1}$, and $$\rho=\e_1 \rho' \qwhere \rho' =
\{\pme^{\bb'_1}, \cdots, \pme^{\bb'_h}\} \in \TT^{r-1}_{q}.$$
We have $$\m_{\e_1 \sigma}=\m_{\rho}=\m_{\e_1 \rho'} \Longrightarrow
  \e_1\m_{\sigma}=\e_1\m_{\rho'} \Longrightarrow
  \m_{\sigma}=\m_{\rho'}.$$ As $\sigma \in \SS^{r-1}_{q}$, we
  conclude that $\sigma=\rho'$ and hence $\e_1\sigma=\e_1\rho'=\rho$,
  which concludes our argument.

Assume now $\e_1\sigma\in\Srq$. Assume $\m_{\sigma}=\m_{\rho}$ for some $\rho\in \TT^{r-1}_{q}$. It follows that
$$
\m_{\e_1\sigma}=\e_1\m_{\sigma}=\e_1\m_{\rho}=\m_{\e_1\rho}
$$
and hence $\e_1\sigma=\e_1\rho$, implying $\sigma=\rho$. Therefore, $\sigma$ is in $\SS^{r-1}_{q}$. 
    \end{proof}

\section{{\bf \large  Facets of the Scarf complex $\Srq$}} 
\label{f:facets}
This section aims to identify a large number of facets of
$\Srq$ by defining a new simplicial complex $\UUrq$ contained in $\Srq$  (\cref{t:f-vector}), and whose facets are also facets of $\Srq$. 
Using the notation in \eqref{e:e-sigma}, we first identify specific faces.

 \begin{definition}[{\bf $\Urq$ and $\Ua$}] \label{d:Urq}
For $r,q\geq 1$, let $\Urq \in \Trq$ denote the set of all square free
   $r$-tuples of the $\e_i$.  In other words
\begin{align*}\Urq& =\{
\e_{i_1}\e_{i_2}\cdots \e_{i_r} : 1 \leq i_1 < i_2 <\ldots<i_r\leq q
\}\\
&= \{ \pmea : \ba \in \N^r_q,\ a_i \in \{0, 1\} {\text{ for all }}i\in [q]
\}.
\end{align*}

For $\ba=(a_1, \dots, a_q)\in \NN^q$ with $|\ba| < r$, set
\begin{align}
\label{e:Ua}
\Ua& =\pmea \U_q^{r-|\ba|} \\ \nonumber
& =\{\pmec\st \bc =(c_1, \dots, c_q)\in \Nrq, a_i\le c_i\le a_{i}+1\quad\text{for all $i\in [q]$}\}.
\end{align}  
\end{definition}

\begin{example} 
Suppose $q = 3$ and $r = 6$, and suppose $$\ba = (1, 2, 0), \quad \bb = (2,
2, 0) \qand \bc = (1, 0, 1).$$Then $|\ba| = 3$, $|\bb| = 4$, and
$|\bc| = 2$. Note that $\U_3^4=\emptyset$ because $4>3$, and hence  $$\Uc =\pmec \U_q^{r-|\bc|} =\e_1\e_3
\U_3^4=\emptyset\,.$$

Next, note that
$$\Ua =\pmea \U_q^{r-|\ba|}= \e_1\e_2^2 \U_3^3 = \e_1\e_2^2\{\e_1\e_2\e_3\} =
\{\e_1^2\e_2^3\e_3\}, $$ and 
$$\Ub =\pmeb \U_q^{r-|\bb|} = \e_1^2\e_2^2 \U_3^2 = \e_1^2\e_2^2\{\e_1\e_2, \e_1\e_3,
\e_2\e_3\} = \{\e_1^3\e_2^3, \e_1^3\e_2^2\e_3, \e_1^2\e_2^3\e_3\}.$$
In particular, $\Ua \subseteq \Ub$.
\end{example}

The next result explains, in particular,  which of the sets $\Ua$ are maximal with respect to inclusion. 

 \begin{lemma}[{\bf The faces $\Ua$}] \label{l:facets}
   Let $r$ and $q$ be positive integers
   integers.  Then
   \begin{enumerate}
   \item $|\Urq| = \binom{q}{r}$.
   \item If $r>q$ then $\Urq = \emptyset$.
   \item If $q\ge 2$, then the sets $\Ua$ with $\ba \in \NN^q$ and $r-q<|\ba| < r$ are  the maximal elements with respect to inclusion among all sets
     $\Ua$ with $\ba\in \NN^q$ and $|\ba|<r$.
   \end{enumerate}
\end{lemma}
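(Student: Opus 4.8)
The plan is to handle the three parts in order, treating (1) and (2) as quick consequences of the definition and concentrating the real effort on (3).

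\textbf{Parts (1) and (2).} For (1), recall from \cref{d:Urq} that $\Urq$ consists exactly of the products $\e_{i_1}\cdots\e_{i_r}$ with $1\le i_1<\cdots<i_r\le q$, i.e.\ of the monomials $\pmea$ with $\ba\in\N^r_q$ and each $a_i\in\{0,1\}$. Such $\ba$ correspond bijectively to $r$-element subsets of $[q]$, and by \cref{p:vertices} distinct subsets give distinct vertices, so $|\Urq|=\binom qr$. Part (2) is immediate: if $r>q$ there is no $r$-element subset of $[q]$, so the index set is empty. (Both are consistent with the convention $\binom qr=0$ for $r>q$.)

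\textbf{Part (3).} Write $\Ua=\pmea\,\U_q^{r-|\ba|}$ as in \eqref{e:Ua}; note $\Ua=\emptyset$ exactly when $r-|\ba|>q$, i.e.\ $|\ba|<r-q$, by part (2). So among nonempty such sets we automatically have $|\ba|\ge r-q$. The first step is to show that if $|\ba|=r-q$ (so $r-|\ba|=q$ and $\U_q^{r-|\ba|}=\{\e_1\cdots\e_q\}$ is a single vertex), then $\Ua$ is a single vertex $\pme^{\ba+(1,\dots,1)}$, hence can only be maximal if it is not contained in a strictly larger $\Ua'$ — but it visibly is, since any $\bb$ with $|\bb|=r-q$ has $\Ub\subseteq \U_{\bb'}$ for, say, $\bb'$ obtained by decreasing one positive coordinate of $\bb$ by $1$ (when $q\ge 2$ and $r>q$ such a move keeps us in the range and genuinely enlarges the set; the case $|\ba|=r-q$ with some coordinate $0$ needs a small separate check, but a coordinatewise comparison from \eqref{e:Ua} handles it). Thus no $\Ua$ with $|\ba|\le r-q$ is maximal, which together with the range restriction $r-q<|\ba|<r$ being forced for nonemptiness-and-maximality gives the ``only if'' direction.

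For the ``if'' direction — that every $\Ua$ with $r-q<|\ba|<r$ \emph{is} maximal — suppose $\Ua\subseteq \U_{\bb}$ with $r-q<|\bb|<r$; we must show $\ba=\bb$. The key observation is that from \eqref{e:Ua}, $\Ua=\{\pmec : a_i\le c_i\le a_i+1,\ |\bc|=r\}$, so the componentwise minimum over all $\bc$ with $\pmec\in\Ua$ is exactly $\ba$ (each lower bound $a_i$ is attained: since $|\ba|<r$ we can always add the needed $r-|\ba|\le q$ units to other coordinates), and similarly the componentwise maximum is $\ba+(1,\dots,1)$ (attainable since $|\ba|>r-q$ means $r-|\ba|<q$, so we need to raise fewer than $q$ coordinates). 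Now $\Ua\subseteq\U_\bb$ forces, via \eqref{ineq} of \cref{l:when-divide} applied inside $\U_\bb$ (or directly from the coordinatewise description of $\U_\bb$), that $\bb\le\ba$ componentwise and $\ba+(1,\dots,1)\le\bb+(1,\dots,1)$ componentwise, i.e.\ $\bb\le\ba\le\bb$, so $\ba=\bb$. I expect the main obstacle to be the bookkeeping in this ``attainability of the extreme vertices'' argument: one must verify carefully, using the hypotheses $r-q<|\ba|<r$ (equivalently $0<r-|\ba|<q$), that both the all-lower-bounds vertex $\ba$ and the all-upper-bounds vertex $\ba+(1,\dots,1)$ actually occur as elements of $\Ua$, since it is precisely these two extreme points that pin down $\ba$ and make the inclusion $\Ua\subseteq\U_\bb$ collapse to equality.
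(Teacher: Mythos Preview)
Your treatment of (1), (2), and the ``if'' direction of (3) is correct and essentially the same as the paper's: the paper also argues, for $|\ba|>r-q$, that for each $i$ one can find elements of $\Ua$ realizing both $c_i=a_i$ and $c_i=a_i+1$, and then reads off $a_i=b_i$ from the coordinatewise description \eqref{e:Ua} of $\Ub$. (Your restriction to $r-q<|\bb|<r$ in that step is unnecessary---the min/max argument already forces $\ba=\bb$ for any $\bb$ with $\Ua\subseteq\Ub$, and the remaining $\bb$'s give $\Ub$ empty or a singleton, into which the $\ge 2$-element set $\Ua$ cannot fit.)

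There is, however, a genuine error in the ``only if'' direction, in the case $|\ba|=r-q$. You propose to pass from $\ba$ to $\bb'$ by \emph{decreasing} a positive coordinate. But then $|\bb'|=r-q-1$, so $r-|\bb'|=q+1>q$ and hence $\U_{\bb'}=\pme^{\bb'}\,\U_q^{q+1}=\emptyset$; you certainly cannot embed the singleton $\Ua=\{\pme^{\ba+\mathbf 1}\}$ into the empty set. The correct move (and the one the paper makes) is to \emph{increase} a coordinate: set $\ba'=\ba+e_1$, so that $|\ba'|=r-q+1<r$ (here is where $q\ge 2$ is used). Then $\pme^{\ba+\mathbf 1}\in\U_{\ba'}$ because $a'_i\le a_i+1\le a'_i+1$ for every $i$, and the inclusion $\Ua\subseteq\U_{\ba'}$ is strict since $|\U_{\ba'}|=\binom{q}{q-1}=q\ge 2$. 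With this correction your outline matches the paper's proof.
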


 \begin{proof}
  Statements~(1) and~(2) are straightforward.  Assume now $q\ge 2$. For~(3) we show that if
  $$\ba=(a_1, \dots, a_q),\  \bb=(b_1, \dots, b_q) \in \NN^q \qwith
  |\ba|, \ |\bb|<r,$$ then $\Ua\subseteq \Ub$ if and only one of
  the following conditions holds:
\begin{itemize}
\item $|\ba|<r-q$;
\item $|\ba|=r-q$ and $a_i\le b_i\le a_i+1$ for all $i\in [q]$;
\item $|\ba|>r-q$  and $\ba=\bb$. 
\end{itemize}
Indeed, if $|\ba|<r-q$, then $\Ua=\emptyset$. If $|\ba|=r-q$, then
$\Ua=\{\pmea\e_1\e_2\dots \e_q\}$, and $$\Ua\subseteq \Ub \iff \pmeb \mid
\pmea\e_1\e_2\dots \e_q \text{ and } r-q\le |\bb|\iff a_i\le b_i\le a_i+1 \qforall i\in [q].$$ 
In particular, $\Ua\subseteq \U_{\ba'}$, where $\ba'=(a_1+1, a_2, \dots, a_q)$, with  $|\ba'|=r-q+1<r$ (since we assumed $q>1$) and thus $\Ua$ is not maximal. 

Assume now
$|\ba|>r-q$ and $\Ua\subseteq \Ub$. We need to show $\ba=\bb$. Let $i\in [q]$.  There exists $\pmec\in \Ua$ with such that $c_i=a_{i}+1$.  Since $\pmec\in \Ub$, \eqref{e:Ua} implies $c_i\le
b_i+1$ and hence $a_i\le b_i$. There also exists $\pmed\in
\Ua$ with $\bd=(d_1,
\dots, d_q)$ such that $d_i=a_i$.  Since $\pmed\in \Ub$, we must have $b_i\le d_i$ and hence
$b_i \le a_i$. We conclude $a_i=b_i$ for all $i\in [q]$, so $\ba=\bb$,
as desired.
\end{proof}

We want to define now a simplicial complex $\UUrq$ whose faces are all the subsets of the sets $\Ua$ defined above. We describe this complex by identifying its facets. \cref{l:facets} shows that our definition below makes sense.

\begin{definition}[{\bf The simplicial complex $\UUrq$}]
\label{d:Ur}
For integers $r\geq 1$ and $q\ge 2$ let $\UUrq$ denote the simplicial
complex described by its facets, as follows:
$$
\UUrq=\left\langle \Ua \st \ba \in \NN^q, \ r-q<|\ba| < r \right\rangle \,.
$$
 If $q=1$ , we set $\UUrq=\langle \U_{(r-1)}\rangle$, where $ \U_{(r-1)}=\{\e_1^r\}={\e_1}^{r-1}\U_1^1$.  
\end{definition}

\begin{example}
We write an explicit description of the facets of $\UUrq$ in a few cases.

If  $r=2$ and $q\ge 3$, then  
\begin{equation}\label{e:U2}
\UU^2_q=\left \langle \U^{2}_q,  \e_i\,\U^{1}_q\st i\in [q]\right\rangle=\left\langle \{\e_j\e_k\st 1\le j<k\le q\}, \{\e_i\e_l\st l\in [q]\} \st i\in [q]\right\rangle. 
\end{equation} 
When $q\le 2$, then $\UU^2_q=\left \langle \e_i\,\U^{1}_q\st i\in [q]\right\rangle$. 

If $r=3$ and $q\ge 4$, then 
\begin{align*}
\UU^3_q&=\left \langle \U^{3}_q,  \e_i\,\U^{2}_q, \e_j\e_k\,\U^1_q\st i,j,k\in [q]\right\rangle\\
&=\left\langle \{\e_a\e_b\e_c\st 1\le a<b<c\le q\}, \{\e_i\e_u\e_v\st u,v\in [q], u<v\}, \{\e_j\e_k\e_w\st w\in [3]\} \st i,j,k\in [q]\right\rangle. 
\end{align*}
Also, $\UU^3_3=\left \langle \e_i\,\U^{2}_q, \e_j\e_k\,\U^1_q\st i,j,k\in [q]\right\rangle$ and $\UU^3_q=\left \langle \e_j\e_k\,\U^1_q\st j,k\in [q]\right\rangle$ when  $q\le 2$.

If $r=4$ and $q=3$, then 
\begin{align*}
\UU^4_3&=\left \langle \e_i\e_j\,\U^{2}_3,  \e_a\e_b\e_c\,\U^{1}_3\st i,j ,a,b,c\in [3]\right\rangle\\
&=\left\langle \{\e_i\e_j\e_u\e_v\st 1\le u<v\le 3\}, \{\e_a\e_b\e_c\e_k\st k\in [3] \} \st i,j,a,b,c\in [q]\right\rangle. 
\end{align*}
A geometric realization of the simplicial complex $\UU^4_3$ in 
\cref{f:U-pic} shows that $\UU^4_3$ has $16$ triangle facets.

\begin{figure}  
  \tiny{
 \begin{tikzpicture}
\coordinate (1111) at (0, 0);
\coordinate (1112) at (1, 0);
\coordinate (1122) at (2,0);
\coordinate (1222) at (3, 0);
\coordinate (2222) at (4, 0);

\coordinate (1113) at (0.5, 0.865);
\coordinate (1123) at (1.5, 0.865);
\coordinate (1223) at (2.5,0.865);
\coordinate (2223) at (3.5, 0.865);

\coordinate (1133) at (1, 1.73);
\coordinate (1233) at (2, 1.73);
\coordinate (2233) at (3, 1.73);

\coordinate (1333) at (1.5, 2.595);
\coordinate (2333) at (2.5, 2.595);

\coordinate (3333) at (2, 3.46);

\draw [fill=gray!20](1111) -- (1112) -- (1113);
\draw [fill=gray!20](1123) -- (1112) -- (1113);
\draw [fill=gray!20](1123) -- (1133) -- (1113);
\draw [fill=gray!20](1123) -- (1133) -- (1233);
\draw [fill=gray!20](1333) -- (1133) -- (1233);
\draw [fill=gray!20](1333) -- (2333) -- (1233);
\draw [fill=gray!20](1333) -- (2333) -- (3333);

\draw [fill=gray!20](1112) -- (1122) -- (1123);
\draw [fill=gray!20](1223) -- (1122) -- (1123);
\draw [fill=gray!20](1123) -- (1223) -- (1233);
\draw [fill=gray!20](2233) -- (1223) -- (1233);
\draw [fill=gray!20](2233) -- (2333) -- (1233);

\draw [fill=gray!20](1122) -- (1222) -- (1223);
\draw [fill=gray!20](2223) -- (1222) -- (1223);
\draw [fill=gray!20](2223) -- (1223) -- (2233);
\draw [fill=gray!20](2223) -- (1222) -- (2222);

\draw[black, fill=black] (1111) circle(0.05);
\draw[black, fill=black] (1122) circle(0.05);
\draw[black, fill=black] (1112) circle(0.05);
\draw[black, fill=black] (1222) circle(0.05);
\draw[black, fill=black] (2222) circle(0.05);
\draw[-] (1111) -- (2222);

\node[label = below :$\e_1^4$] at (1111) {};
\node[label = below :$\e_1^3\e_2$] at (1112) {};
\node[label = below :$\e_1^2\e_2^2$] at (1122) {};
\node[label = below :$\e_1\e_2^3$] at (1222) {};   
\node[label = below :$\e_2^4$] at (2222) {};      

\draw[black, fill=black] (1113) circle(0.05);
\draw[black, fill=black] (1123) circle(0.05);
\draw[black, fill=black] (1223) circle(0.05);
\draw[black, fill=black] (2223) circle(0.05);
\draw[-] (1113) -- (2223);
\node[label = left :$\e_1^3\e_3$] at (1113) {};
\node[label = below :$b$] at (1123) {};
\node[label = below :$c$] at (1223) {};   
\node[label = right :$\e_2^3\e_3$] at (2223) {};   

\draw[black, fill=black] (1133) circle(0.05);
\draw[black, fill=black] (1233) circle(0.05);
\draw[black, fill=black] (2233) circle(0.05);
\draw[-] (1133) -- (2233);
\node[label = left :$\e_1^2\e_3^2$] at (1133) {};
\node[label = below :$a$] at (1233) {};   
\node[label = right :$\e_2^2\e_3^2$] at (2233) {};  

\coordinate (1333) at (1.5, 2.595);
\coordinate (2333) at (2.5, 2.595);
\draw[black, fill=black] (1333) circle(0.05);
\draw[black, fill=black] (2333) circle(0.05);
\draw[-] (1333) -- (2333);
\node[label = left :$\e_1\e_3^3$] at (1333) {};
\node[label = right :$\e_2\e_3^3$] at (2333) {};

\draw[black, fill=black] (3333) circle(0.05);
\node[label = above :$\e_3^4$] at (3333) {};

\draw[-] (1111) -- (3333);
\draw[-] (1112) -- (2333);
\draw[-] (1122) -- (2233);
\draw[-] (1222) -- (2223);
\draw[-] (1112) -- (1113);
\draw[-] (1122) -- (1133);
\draw[-] (1222) -- (1333);
\draw[-] (2222) -- (3333);
 \end{tikzpicture} 
  }
\caption{$\UU_3^4$ with $a=\e_1\e_2\e_3^2$, $b=\e_1^2\e_2\e_3$ and $c=\e_1\e_2^2\e_3$}\label{f:U-pic}  
\end{figure}

\end{example}

\begin{remark}[{\bf $\UU_q^2=\LL_q^2$}]
\label{U=L}
Comparing \cref{L2} and \eqref{e:U2}, we see $\UU_q^2=\LL_q^2$. Then
\cref{L2-supports} implies that $\UU_q^2$ supports a minimal free
resolution of ${\E_q}^2$.
\end{remark}

We now start building the steps towards \cref{t:f-vector}, which
shows that the facets of $\UUrq$ are also facets of $\Srq$, and as a
result we establish effective lower bounds on the betti numbers of
$\Erq$.

\begin{lemma}[{\bf Monomial labels on $\UUrq$}]\label{l:necessary}
  Assume $r\leq q$ so that $\Urq \neq \emptyset$. Then the following hold.
\begin{enumerate}

\item The monomial label of the square-free face $\Urq$ is
  \begin{equation}\label{e:murq}
  \m_{\Urq}=\prod_{\emptyset\ne A \subseteq [q]} {x_A}^{\min(|A|,r)}=\frac{\e_1\e_2 \ldots \e_q}{\displaystyle \prod_{\tiny\begin{array}{c}A \subseteq [q]\\ r <|A| 
    \end{array}} {x_A}^{|A|-r}}.
  \end{equation}
\item If $v=\e_{i_1}\e_{i_2}\cdots \e_{i_r} \in \Urq$ for $ 1 \leq
    i_1 < i_2 <\ldots<i_r\leq q$, then
  $$
  \m_{\Urq\sm\{v\}}=\displaystyle 
   \frac{\m_{\Urq}}{x_{\{i_1,\ldots, i_r\}}}\,.
 $$
 \item If $\ba\in \mathbb N^q$ with $r-q<|\ba|<r$, then
$$
   \m_{\Ua}
   =\pmea\cdot \prod_{\tiny \emptyset\ne A \subseteq [q]} {x_A}^{\min(|A|,r-|\ba|)}
   =\frac{\pmea \e_1\e_2 \ldots \e_q}{\displaystyle
     \prod_{\tiny\begin{array}{c}A \subseteq [q]\\ r-|\ba| <|A| \end{array}} {x_A}^{|A|-(r-|\ba|)}}.
$$
\end{enumerate}
\end{lemma}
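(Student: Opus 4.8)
The plan is to compute each monomial label directly from the explicit lcm formula \eqref{lcm} in \cref{l:when-divide}, which expresses $\m_\sigma$ for a face $\sigma=\{\pme^{\ba_1},\ldots,\pme^{\ba_d}\}$ as $\prod_{\emptyset\ne A\subseteq[q]}(x_A)^{\max_k(\sum_{i\in A}a_{ki})}$. So for each of the three faces in question, the entire computation reduces to finding, for every nonempty $A\subseteq[q]$, the maximum of $\sum_{i\in A}c_i$ as $\pmec$ ranges over the vertices of that face.

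For part~(1): the vertices of $\Urq$ are the $\pmec$ with $\bc\in\N^r_q$ and all $c_i\in\{0,1\}$, i.e. the indicator vectors of $r$-subsets of $[q]$. Thus $\sum_{i\in A}c_i$ counts how many of the $r$ chosen indices lie in $A$, and its maximum over all valid $\bc$ is exactly $\min(|A|,r)$ — take all of $A$ if $|A|\le r$, otherwise take $r$ indices from $A$. This gives the first equality; the second (quotient) form follows by writing $\e_1\cdots\e_q=\prod_A (x_A)^{|A|}$ (each $x_A$ divides $\e_i$ for exactly the $|A|$ indices $i\in A$) and subtracting exponents: $|A|-\min(|A|,r)$ is $0$ when $|A|\le r$ and $|A|-r$ otherwise.

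For part~(2): removing $v=\e_{i_1}\cdots\e_{i_r}$ from $\Urq$ removes exactly one indicator vector, namely $\bc^{(0)}$ supported on $S=\{i_1,\ldots,i_r\}$. For $A\ne S$ I must check the maximum of $\sum_{i\in A}c_i$ over the remaining vertices is still $\min(|A|,r)$; the only $A$ that could lose is one where $\bc^{(0)}$ was the \emph{unique} maximizer, which forces $S\subseteq A$ (to realize $|A|\le r$ fully) or $A\subseteq S$ with equality forcing $A=S$ — a short case check shows $A=S$ is the only value where the max drops, and there it drops by exactly $1$. Hence $\m_{\Urq\sm\{v\}}=\m_{\Urq}/x_S$. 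For part~(3): $\Ua=\pmea\,\U_q^{r-|\ba|}$, and multiplying every vertex of $\U_q^{r-|\ba|}$ by the fixed monomial $\pmea$ multiplies the lcm by $\pmea$; so $\m_{\Ua}=\pmea\cdot\m_{\U_q^{r-|\ba|}}$, and part~(1) applied with $r$ replaced by $r-|\ba|$ (legitimate since $0<r-|\ba|<q$ under the hypothesis $r-q<|\ba|<r$, so $\U_q^{r-|\ba|}\ne\emptyset$) finishes it.

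I expect no serious obstacle here — the main (minor) care point is part~(2), where one must verify that for $A\ne\{i_1,\ldots,i_r\}$ the removed vertex was not the sole maximizer of $\sum_{i\in A}c_i$; this is where a clean case distinction on whether $|A|\le r$ and on the relation between $A$ and $\{i_1,\ldots,i_r\}$ is needed, but it is elementary. The translation of exponent identities between the product form and the quotient form is purely bookkeeping via $\e_1\cdots\e_q=\prod_{\emptyset\ne A\subseteq[q]}(x_A)^{|A|}$.
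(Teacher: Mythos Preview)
Your proposal is correct and follows essentially the same approach as the paper: both reduce to computing, for each nonempty $A\subseteq[q]$, the exponent $\max_J|A\cap J|$ (equivalently your $\max_{\bc}\sum_{i\in A}c_i$) via the lcm formula \eqref{lcm}, identify it as $\min(|A|,r)$ for part~(1), check that removing the single vertex indexed by $S=\{i_1,\ldots,i_r\}$ drops this maximum only at $A=S$ and by exactly~$1$ for part~(2), and obtain part~(3) from part~(1) by factoring out $\pmea$. Your sketch of part~(2) is slightly less crisply stated than the paper's (which simply records $\max\{|A\cap J|:J\in\J,\,J\ne B\}=\min(|A|,r)$ for $A\ne B$ and $=r-1$ for $A=B$, using $q>r$), but the content is the same.
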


 \begin{proof} 
Let $\J$ denote the set of all subsets of $[q]$ with $r$
elements. Then $\J$ is in one-to-one correspondence to $\Urq$. If
$J\in \J$ with $J=\{j_1, \dots, j_r\}$, then
$$
\e_{j_1}\dots \e_{j_r}= \prod_{\emptyset\ne A\subseteq [q]}x_A^{|A\cap J|}\,.
$$
We have then 
 \begin{align*}
   \m_{\Urq}
&= \lcm \Big ( \prod_{\emptyset\ne A\subseteq [q]}x_A^{|A\cap J|} \st  J\in \J \Big )
=\prod_{\tiny \emptyset\ne A \subseteq [q]} {x_A}^{\min(|A|,r)} 
=\prod_{\tiny \begin{array}{c} \emptyset\ne A \subseteq [q]\\ |A| \leq r
   \end{array}}  {x_A}^{|A|}
   \cdot
   \prod_{\tiny \begin{array}{c} A \subseteq [q]\\ |A| > r
     \end{array}} {x_A}^{r}\\
&=\frac{\displaystyle \prod_{\tiny \emptyset\ne A \subseteq [q]} {x_A}^{|A|}}
   {\displaystyle \prod_{\tiny \begin{array}{c} A \subseteq [q]\\ |A| > r \end{array}} {x_A}^{|A|-r}}
 =\frac{\e_1\e_2 \ldots \e_q}{\displaystyle \prod_{\tiny \begin{array}{c} A \subseteq [q]\\ |A| > r \end{array}} {x_A}^{|A|-r}}.
 \end{align*}

 Next, given $v$ as in the hypothesis, set $B=\{i_1, \dots, i_r\}$. Since
 $q>r$, for every nonempty $A \subseteq [q]$ we have
$$\max\{|A\cap J|\st J\in \J, J\ne B\}=\begin{cases}
\min(|A|,r) &\text{if $A\ne B$}\\
r-1 &\text{if $A=B$}
\end{cases}
$$
and then 
 \begin{align*}
   \m_{\Urq\ssm\{v\}}
&=\lcm \Big ( \prod_{\emptyset\ne A\subseteq [q]}x_A^{|A\cap J|} \st  J\in\J, J\ne B \Big )
 = \prod_{\tiny \emptyset\ne A \subseteq [q]} {x_A}^{\max\{|A\cap J|\st J\in \J, J\ne B\}} \\
&= {x_B}^{r-1}
\prod_{\tiny \begin{array}{c}\emptyset\ne A \subseteq [q]\\A\ne B\end{array}} {x_A}^{\min(|A|,r)}.
 \end{align*}

 The last statement follows now from \eqref{e:murq}.
   \end{proof}

\begin{proposition}\label{p:german}
  Let $q, r\geq 1$. If  $\pmeb \notin \Ua$ for some $\ba
 \in \NN^q$ with $r-q < |\ba| < r$ and $\bb \in \Nrq$, then 
 then there
 exists $\bc \in \Nrq$ such that
  \begin{equation}\label{e:german}  
  \pmec \in \Ua \qand \m_{\Ua \cup \{\pmeb\}}=\m_{(\Ua \cup \{\pmeb\})\sm \{\pmec\}}.
\end{equation} 
\end{proposition}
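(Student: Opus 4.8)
The plan is to produce the required vector $\bc$ explicitly. Write $s=r-|\ba|$; the hypothesis $r-q<|\ba|<r$ says precisely that $1\le s\le q-1$, so $\U_q^s\ne\emptyset$ and, by \eqref{e:Ua}, $\Ua=\pmea\,\U_q^s$ consists of the monomials $\pmea\prod_{i\in B}\e_i$ as $B$ ranges over the $s$-element subsets of $[q]$. Throughout I would use two facts: the exponent of a variable $x_A$ (for $\emptyset\ne A\subseteq[q]$) in $\pme^{\bd}$ is $\sum_{i\in A}d_i$, and by \cref{l:necessary}(3) the exponent of $x_A$ in $\m_{\Ua}$ is $\sum_{i\in A}a_i+\min(|A|,s)$.

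First I would set up the candidate and reduce the statement to a purely combinatorial claim. Given an $s$-subset $B\subseteq[q]$, let $\bc$ be defined by $c_i=a_i+1$ for $i\in B$ and $c_i=a_i$ otherwise; then $|\bc|=|\ba|+s=r$ and $a_i\le c_i\le a_i+1$, so $\bc\in\Nrq$ and $\pmec\in\Ua$. Since $\Ua\ssm\{\pmec\}=\pmea\bigl(\U_q^s\ssm\{\prod_{i\in B}\e_i\}\bigr)$, \cref{l:necessary}(2) (valid because $q>s$) together with \cref{l:necessary}(3) gives $\m_{\Ua\ssm\{\pmec\}}=\m_{\Ua}/x_B$. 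Hence $\m_{\Ua}$ and $\m_{\Ua\ssm\{\pmec\}}$ agree in the exponent of every variable except $x_B$, where the exponent drops from $\sum_{i\in B}a_i+s$ to $\sum_{i\in B}a_i+s-1$. Taking least common multiples with $\pmeb$, whose exponent at $x_B$ is $\sum_{i\in B}b_i$, it follows that $\m_{\Ua\cup\{\pmeb\}}=\m_{(\Ua\cup\{\pmeb\})\ssm\{\pmec\}}$ holds as soon as $\sum_{i\in B}b_i\ge\sum_{i\in B}a_i+s$. So the whole proposition reduces to: \emph{there exists an $s$-subset $B\subseteq[q]$ with $\sum_{i\in B}(b_i-a_i)\ge s$.}

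I expect this last combinatorial claim to be the main (and essentially the only) obstacle; note that it genuinely uses that the $b_i-a_i$ are integers, since it fails for real vectors of the same coordinate sum. Writing $t_i=b_i-a_i\in\ZZ$, we have $\sum_{i=1}^q t_i=|\bb|-|\ba|=r-|\ba|=s$. Choose an $s$-subset $B$ maximizing $\sum_{i\in B}t_i$ and suppose, for contradiction, that this maximum were $\le s-1$. Then $\sum_{i\notin B}t_i\ge 1$, and since $[q]\ssm B$ is nonempty (because $s\le q-1$), some $j\notin B$ has $t_j\ge 1$; by maximality of $B$ every $i\in B$ then satisfies $t_i\ge t_j\ge 1$ (otherwise swapping $i$ for $j$ would enlarge the sum), forcing $\sum_{i\in B}t_i\ge s$ --- a contradiction. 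This produces the desired $B$, and with the associated $\bc$ from the previous paragraph the proof is complete. The only remaining checks are the two invocations of \cref{l:necessary} and the elementary exponent bookkeeping, both routine given the formulas recalled there.
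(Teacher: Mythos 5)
Your proof is correct, and it takes a genuinely different — and arguably cleaner — route than the paper's. The paper first reduces to the case $\egcd(\pmea,\pmeb)=1$ via a factorization, then after reordering coordinates so that $\Supp(\bb)$ precedes $\Supp(\ba)$, it exhibits an explicit choice of $s$-subset, namely the first $r'=r-|\ba|$ indices, and verifies directly via a case split ($r'\le t$ vs.\ $r'>t$) that the resulting $\pmec$ works. You instead avoid both the gcd reduction and the explicit choice: you distill the proposition to the single combinatorial statement that for integers $t_1,\dots,t_q$ summing to $s\in\{1,\dots,q-1\}$ there exists an $s$-subset $B$ with $\sum_{i\in B}t_i\ge s$, and you prove it by taking a maximizing $B$ and deriving a contradiction from integrality. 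This yields a shorter, more unified argument (the paper's reduction to the coprime case is entirely absorbed into the abstract claim), and your remark that the claim genuinely uses integrality and fails over $\RR$ is a nice sanity check. One small point worth making explicit: when you identify $\m_{(\Ua\cup\{\pmeb\})\ssm\{\pmec\}}$ with $\lcm(\m_{\Ua}/x_B,\pmeb)$, you are tacitly using $\pmec\ne\pmeb$, which does hold since $\pmec\in\Ua$ while $\pmeb\notin\Ua$ by hypothesis — but this should be said, since it is the only place the hypothesis $\pmeb\notin\Ua$ enters.
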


\begin{proof}  
  Suppose $\ba=(a_1,\dots, a_q)$ and $\bb=(b_1,\dots, b_q)$.  Set $r'=r-|\ba|$.  The hypothesis implies $r'<q$.

Assume first $\egcd(\pmea, \pmeb)=1$, and hence $\Supp(\ba)\cap
\Supp(\bb)=\emptyset$.  After a possible reordering, without loss of
generality assume for some $t$ and $h$ with $1 \leq t < h \le q$
  $$
  \pmeb={\e_1}^{b_1}\cdots {\e_t}^{b_t}
  \qand
  \pmea={\e_{h}}^{a_{h}}\cdots {\e_{q}}^{a_q}, 
$$ where $a_i, b_j >0$ when $j\leq t$ and $h\le i\le q$, and
$$b_1 + \cdots + b_t = r \qand 
a_{h} + \cdots + a_q = r-r'.$$
   
  Set $\pmec=\pmea w$ where $$w=\e_1\e_2\ldots \e_{r'}\qand A=\{1,
  \ldots, r'\}.$$ We need to show that
     $$
     \pmea w\mid \lcm\left(\m_{\Ua \sm \{ \pmea w\}}, \pmeb \right).
     $$
     In other words, for every nonempty subset $B$ of $[q]$,  and $s\ge 0$, we
     need to show the implication
\begin{equation}
\label{implication}
      {x_B}^s\mid  \pmea w \implies {x_B}^s\mid
      \lcm \left(\m_{\Ua \sm \{ \pmea w\}}, \pmeb \right)
\end{equation}
where by \cref{l:necessary} 
   \begin{equation}\label{e:fraction}
      \m_{\Ua \sm \{ \pmea w\}}
      = \frac{\pmea \m_{\U_q^{r'}}}{x_A}.
      \end{equation} 

Assume ${x_B}^s\mid \pmea w$. If $B\ne A$, then ${x_B}^s$ divides the
fraction in \eqref{e:fraction}, and hence \eqref{implication}
holds. Assume now $B=A$. The largest $s$ such that ${x_A}^s\mid \pmea
w$ is
$$
s=r'+a, \qwhere
a=\begin{cases}
a_{h}+\dots +a_{r'} &\text{if $r'\ge h$}\\
0 &\text{if $r'<h$.}
\end{cases}
$$
If $r'\le t$, then $r'<h$, hence $s=r'$. We have  ${x_A}^{r'} \mid \pmeb$, hence \eqref{implication} holds. 

 Assume now $r' >t$. Since
 $b_1+\dots +b_t=r$, we have ${x_A}^r\mid \pmeb$. Since $a\le r-r'$, it
 follows that ${x_A}^{r'+a}\mid \pmeb$, and thus \eqref{implication} is
 established, finishing the proof under the assumption that
 $\egcd(\pmea, \pmeb)=1$.

Next, assume $\egcd(\pmea, \pmeb)=\pmed \ne 1$, where $\bd \in \N_q^d$
for some $d$ with $0 < d \le r-r'$.  Write $\pmea= \pmed \cdot \pme^{\ba'}$ and
$\pmeb= \pmed \cdot \pme^{\bb'}$, with $\ba' \in \N_q^{r-r'-d}$, $\bb' \in \N_q^{r-d}$  and $\egcd(\pme^{\ba'}, \pme^{\bb'})=1$. Using the argument
above, choose $w\in \U_q^{r'}$ such that
 \begin{equation*}
  \m_{\U_{\ba'} \cup \{\pme^{\bb'}\}}=\m_{(\U_{\ba'} \cup \{\pme^{\bb'}\})\sm \{\pme^{\ba'} w\}}.
  \end{equation*}
The equality \eqref{e:german} follows from here. 
      \end{proof}

We are now ready to state the main theorem in this
section. Recall that the $\f$-vector of a simplicial
complex $\Delta$ is the tuple $\f(\Delta)=(f_i)_{i\ge 0}$ with $f_i$
denoting the number of $i$-dimensional faces of $\Delta$.

\begin{theorem}[{\bf Facets$(\UUrq) \subseteq$ Facets$(\Srq)$}]\label{t:f-vector}
The set $\Ua$ is a facet of of the Scarf complex $\Srq$ for all
$\ba\in \NN^q$ such that $r-q<|\ba|<r$. In particular,
$\UUrq\subseteq \Srq$.

Furthermore, for all $i\ge 0$, $$f_i\le
\beta_i^S(\Erq)$$ where $\f(\UUrq)=(f_i)_{i\ge 0}$.
\end{theorem}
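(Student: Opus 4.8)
The plan is to prove the two assertions in sequence, relying on \cref{p:german} for maximality and on \cref{l:expansion} together with \cref{l:necessary} for membership in $\Srq$. The case $q=1$ is degenerate: the range $r-q<|\ba|<r$ is empty, $\Erq$ is principal, and $\UUrq=\Srq=\langle\{\e_1^r\}\rangle$. So I would take $q\ge2$ and fix $\ba\in\NN^q$ with $r-q<|\ba|<r$, setting $r'=r-|\ba|$, so that $1\le r'\le q-1$ and hence $\U_q^{r'}\ne\emptyset$; the monomial-label formulas of \cref{l:necessary} are then available with $r'$ in place of $r$, and $\m_{\Ua}=\pmea\cdot\m_{\U_q^{r'}}$.

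\emph{Step 1: $\Ua\in\Srq$.} I would check the two conditions of \cref{l:expansion} for $\sigma=\Ua\in\Trq$. For~(1): any vertex of $\Ua$ is $v=\pmea\e_{i_1}\cdots\e_{i_{r'}}$ with $i_1<\cdots<i_{r'}$; pulling the common factor $\pmea$ out of $\Ua\ssm\{v\}$ and applying \cref{l:necessary}(2) to $\U_q^{r'}$ gives $\m_{\Ua\ssm\{v\}}=\m_{\Ua}/x_{\{i_1,\dots,i_{r'}\}}\ne\m_{\Ua}$. For~(2) --- equivalently the implication ``$\pmeb\mid\m_{\Ua}\ \Rightarrow\ \pmeb\in\Ua$'' for $\bb\in\Nrq$ --- I would use \cref{l:necessary} to read the $x_A$-exponent of $\m_{\Ua}$ as $\sum_{i\in A}a_i+\min(|A|,r')$, so that $\pmeb\mid\m_{\Ua}$ means $\sum_{i\in A}b_i\le\sum_{i\in A}a_i+\min(|A|,r')$ for all $\emptyset\ne A\subseteq[q]$; taking $A=\{j\}$ yields $b_j\le a_j+1$, and taking $A=[q]\ssm\{j\}$ (allowed since $|A|=q-1\ge r'$) together with $|\bb|=|\ba|+r'=r$ yields $b_j\ge a_j$. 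Hence $\bb-\ba\in\{0,1\}^q$ with $|\bb-\ba|=r'$, so $\pmeb\in\pmea\U_q^{r'}=\Ua$, and \cref{l:expansion} gives $\Ua\in\Srq$.

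\emph{Step 2: $\Ua$ is a facet, and $\UUrq\subseteq\Srq$.} If $\Ua$ were not a facet, there would be $\tau\in\Srq$ with $\Ua\subsetneq\tau$; choosing $\pmeb\in\tau\ssm\Ua$, the set $\Ua\cup\{\pmeb\}$ is a face of $\Srq$ since $\Srq$ is a simplicial complex. But \cref{p:german} provides $\pmec\in\Ua$ (so $\pmec\ne\pmeb$) with $\m_{\Ua\cup\{\pmeb\}}=\m_{(\Ua\cup\{\pmeb\})\ssm\{\pmec\}}$; thus $\Ua\cup\{\pmeb\}$ shares its monomial label with the proper subface $(\Ua\cup\{\pmeb\})\ssm\{\pmec\}$, contradicting $\Ua\cup\{\pmeb\}\in\Srq$. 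Hence $\Ua$ is a facet of $\Srq$. Since the facets of $\UUrq$ are exactly these sets $\Ua$ (\cref{d:Ur}) and $\Srq$ is closed under passing to subsets, every face of $\UUrq$ lies in $\Srq$; that is, $\UUrq\subseteq\Srq$.

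\emph{Step 3: the $\f$-vector bound.} This is now formal: $\UUrq\subseteq\Srq$ gives $f_i(\UUrq)\le f_i(\Srq)$, and the minimal free resolution of $\Erq$ contains an isomorphic copy of the homogenized simplicial chain complex $F^{\Srq}_\bullet$ (as recalled in \cref{s:basic-definitions}), whose $i$-th free module has rank $f_i(\Srq)$; hence $f_i\le f_i(\Srq)\le\beta_i^S(\Erq)$. The only genuinely computational point is condition~(2) of \cref{l:expansion} in Step~1 --- the exponent bookkeeping pinning $\bb-\ba$ to a $\{0,1\}$-vector --- whereas the delicate structural input, \cref{p:german}, is already in hand; modulo it, the proof is a short assembly of \cref{l:expansion}, \cref{l:necessary}, \cref{p:german}, and the standard embedding of the Scarf chain complex into the minimal free resolution.
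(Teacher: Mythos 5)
Your proof is correct, and it tracks the paper's argument closely: both use \cref{l:expansion} to establish Scarf membership, \cref{l:necessary} for the label computations, and \cref{p:german} for maximality; the $\f$-vector bound then follows in both from the standard fact that the Scarf chain complex embeds in any minimal free resolution. The one place you diverge is the verification that $\Ua\in\Srq$. The paper first establishes the base case $\Urq\in\Srq$ (handling condition (2) of \cref{l:expansion} by the short observation that a non-square-free vertex $v={\e_1}^{a_1}\cdots{\e_t}^{a_t}$ with $t<r$ forces ${x_{[t]}}^r\mid v$ while the $x_{[t]}$-exponent of $\m_{\Urq}$ is only $t$), and then lifts to general $\Ua$ via \cref{p:Scarf-face}. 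You instead prove $\Ua\in\Srq$ directly for arbitrary admissible $\ba$ by reading the $x_A$-exponent of $\m_{\Ua}$ from \cref{l:necessary}(3) and extracting $a_j\le b_j\le a_j+1$ from the singleton and complementary-singleton inequalities. Your route avoids the dependence on \cref{p:Scarf-face} at the cost of slightly heavier exponent bookkeeping; the paper's route is a bit slicker at the base case but requires the lifting proposition. Both are valid and essentially equivalent in content.

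Two small remarks. First, your explicit treatment of $q=1$ (where the range for $\ba$ is empty and $\UUrq=\Srq=\langle\{\e_1^r\}\rangle$) is a welcome completeness point that the paper leaves implicit. Second, in Step 1 when you apply \cref{l:necessary}(2) to $\U_q^{r'}$, you correctly need $q>r'$, which you have noted follows from $|\ba|>r-q$; it is worth keeping that justification visible since \cref{l:necessary}(2) silently uses $q>r$ in its proof.
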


\begin{proof} We first show $\Urq$ is a face of $\Srq$.
If $u, v$ are vertices of $\Trq$
   with $u \in \Urq$ and $v \notin \Urq$, then we observe that by \cref{l:necessary}
   $$\m_{\Urq \sm \{u\}}\ne \m_{\Urq}.$$
Moreover we claim $$\m_{\Urq \cup \{v\}}\ne \m_{\Urq}.$$
To see this assume, without loss of generality, that  
$$v ={\e_{1}}^{a_1}\cdots {\e_{t}}^{a_t} \in \Trq \sm \Urq,
\quad a_1>1. $$ Since $a_1>1$, it must be that $t<r$.  It suffices to
show that $v\nmid \m_{\Urq}$. Indeed, ${x_{[t]}}^r\mid v$, but the
largest power of $x_{[t]}$ that divides $\m_{\Urq}$ is
${x_{[t]}}^t$. The conclusion follows thus from the inequality $t<r$.

Using \cref{l:expansion}, it follows that $\Urq \in \Srq$. The fact
that $\Ua\in \Srq$ now follows from \cref{p:Scarf-face}. To see that
it is a facet, apply \cref{p:german} with $r'=r-|\ba|$, noting that
$r'\le q-1$ because $r-|\ba|<q$.
\end{proof}

An immediate consequence is that ${\E_q}^2$ is a Scarf ideal.

\begin{corollary}[{\bf ${\E_q}^2$ is Scarf}]
\label{c:r=2-Scarf} 
If $q\ge 1$, then $\UU^2_q=\SS^2_q$. In particular the ideal
${\E_q}^2$ is Scarf.
\end{corollary}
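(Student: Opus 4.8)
The plan is to establish the two claimed equalities in turn, using the results already proved in this section together with the known minimality result for $\LL^2_q$. First I would show the inclusion $\SS^2_q \subseteq \UU^2_q$; combined with \cref{t:f-vector}, which gives $\UU^2_q \subseteq \SS^2_q$, this yields $\UU^2_q = \SS^2_q$. To prove $\SS^2_q \subseteq \UU^2_q$, recall that by \cref{U=L} we have $\UU^2_q = \LL^2_q$, and by \cref{L2-supports} the complex $\LL^2_q$ supports a \emph{minimal} free resolution of ${\E_q}^2$. Since the Scarf complex is always contained (as a labelled subcomplex) in the minimal free resolution, and the minimal free resolution has no repeated multidegrees in a fixed homological degree, any face of $\SS^2_q$ must contribute a generator to the minimal resolution; hence every face of $\SS^2_q$ must already be accounted for among the faces of $\LL^2_q$. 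More precisely, if $\sigma \in \SS^2_q$ then the monomial label $\m_\sigma$ occurs as a multidegree of the minimal free resolution in homological degree $\dim(\sigma)$, so by \cref{l:BPS}-type reasoning (or directly from the fact that $F^{\LL^2_q}_\bullet$ is minimal and resolves ${\E_q}^2$) the face $\sigma$ must coincide with one of the faces of $\LL^2_q$ carrying that multidegree. Thus $\SS^2_q \subseteq \LL^2_q = \UU^2_q$.

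An alternative, more self-contained route for $\SS^2_q \subseteq \UU^2_q$ is to invoke \cref{p:faces} directly: a face $\sigma = \{\pme^{\ba_1},\ldots,\pme^{\ba_d}\}$ of $\TT^2_q$ lies in $\SS^2_q$ precisely when $P_{\sigma'} \cap \NN^q = \{\ba_j : \pme^{\ba_j} \in \sigma'\}$ for all $\sigma' \subseteq \sigma$. Each vertex of $\TT^2_q$ is either $\e_i^2$ or $\e_i\e_j$ with $i \ne j$. One checks, by analyzing the polyhedron $P_\sigma$ for candidate faces not contained in any $\Ua$, that such a face always fails the condition in \cref{p:faces}: some lattice point of $P_{\sigma'}$ lies outside the vertex set, forcing a coincidence of monomial labels. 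This reduces to a finite case check on the small faces of $\TT^2_q$, organized by how many "squared" vertices $\e_i^2$ the face contains. I expect the route through minimality of $\LL^2_q$ (\cref{L2-supports}, \cref{U=L}) to be the cleanest, so I would use that.

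Having established $\UU^2_q = \SS^2_q$, the second assertion is immediate: by \cref{U=L} and \cref{L2-supports}, $\UU^2_q = \LL^2_q$ supports a minimal free resolution of ${\E_q}^2$; since $\UU^2_q = \SS^2_q = \Scarf({\E_q}^2)$, this says exactly that $\Scarf({\E_q}^2)$ supports a (necessarily minimal) free resolution of ${\E_q}^2$, which is the definition of ${\E_q}^2$ being a Scarf ideal. The case $q \le 2$ is handled by the same reasoning using the $q \le 2$ clauses in \cref{L2} and \cref{d:Ur}.

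The main obstacle is the inclusion $\SS^2_q \subseteq \UU^2_q$, i.e. showing that $\UU^2_q$ already captures \emph{all} Scarf faces and that $\SS^2_q$ has nothing extra. The clean way around this is precisely the minimality of the resolution supported on $\LL^2_q$: once we know ${\E_q}^2$ has a minimal free resolution of the size and multidegrees dictated by $\LL^2_q$, the Scarf complex — which is sandwiched inside every minimal free resolution and whose faces carry distinct multidegrees in each homological degree — cannot be strictly larger, and since its faces must match multidegrees appearing in the minimal resolution supported on $\LL^2_q$, it is forced to equal $\LL^2_q = \UU^2_q$. So the crux is leveraging \cref{L2-supports} rather than reproving it.
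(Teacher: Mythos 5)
Your proposal is correct and is essentially the same argument as the paper's (terse) proof: by \cref{U=L} and \cref{L2-supports}, $\UU^2_q=\LL^2_q$ supports a minimal free resolution of ${\E_q}^2$, and since $\UU^2_q\subseteq\SS^2_q$ (\cref{t:f-vector}) while the Scarf chain complex always embeds in the minimal free resolution, the two complexes must coincide. One minor imprecision: your intermediate claim that ``the minimal free resolution has no repeated multidegrees in a fixed homological degree'' is false in general (multigraded betti numbers can exceed $1$); the uniqueness you actually need is that no two faces of a Scarf complex share a label, which, together with $\LL^2_q\subseteq\SS^2_q$, is what forces each Scarf face to coincide with the $\LL^2_q$-face of the same label — so the argument goes through once that step is phrased correctly.
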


\begin{proof}
By \cref{U=L}, we know that $\UU^2_q$ supports a minimal free
resolution of $\E_q^2$. Since $\UU^2_q\subseteq \SS^{2}_{q}$, it
follows that $\UU^{2}_{q}=\SS^{2}_{q}$.
\end{proof}

Another consequence of \cref{t:f-vector} is \cref{c:scarf-betti} below, 
which identifies some of the multi-graded betti numbers of $\Erq$.

\begin{corollary}[{\bf Multigraded betti numbers from $\UUrq$}]
  \label{c:scarf-betti}
Let $r\ge 1$, $q\ge 1$ and $\ba\in \NN^q$ with $r-q<|\ba|<r$. Then 
\[
\beta_{i,\m_{\Ua}}(\Erq)=1 \qwhere i=\binom{q}{r-|\ba|}.
\]
\end{corollary}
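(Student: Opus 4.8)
The plan is to deduce this from \cref{t:f-vector} together with the fact (recorded in the discussion of the Scarf complex in \cref{s:basic-definitions}) that for any face $\sigma$ of the Scarf complex $\Scarf(J)$ of a monomial ideal $J$, the minimal free resolution of $J$ has a unique generator in the homological degree $\dim(\sigma)$ and multidegree $\m_\sigma$; that is, $\beta_{\dim(\sigma),\m_\sigma}(J)=1$. So the whole statement reduces to two things: first, that $\Ua$ is a face of $\Srq$, and second, that $\dim(\Ua)=\binom{q}{r-|\ba|}-1$, so that the homological degree in question is $i=\binom{q}{r-|\ba|}$.

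The first point is exactly \cref{t:f-vector}: under the hypothesis $r-q<|\ba|<r$, the set $\Ua$ is a facet (in particular a face) of $\Srq=\Scarf(\Erq)$. The second point is a cardinality count. By \cref{d:Urq}, $\Ua=\pmea\,\U_q^{r-|\ba|}$, and multiplication by the monomial $\pmea$ is a bijection from $\U_q^{r-|\ba|}$ onto $\Ua$; hence $|\Ua|=|\U_q^{r-|\ba|}|$. Since $r-|\ba|<q$ (as $|\ba|>r-q$) we are in the range where \cref{l:facets}(1) applies, giving $|\U_q^{r-|\ba|}|=\binom{q}{r-|\ba|}$. Therefore $\dim(\Ua)=\binom{q}{r-|\ba|}-1$, and the multidegree of $\Ua$ is $\m_{\Ua}$ as computed in \cref{l:necessary}(3). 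Applying the Scarf-face property with $\sigma=\Ua$, $J=\Erq$, $i=\dim(\Ua)+1=\binom{q}{r-|\ba|}$ yields $\beta_{i,\m_{\Ua}}(\Erq)=1$, which is the claim.

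There is really no substantive obstacle here: once \cref{t:f-vector} is in hand, this corollary is a bookkeeping statement, and the only thing to be careful about is the dimension-versus-cardinality shift (a face with $k$ vertices has dimension $k-1$ and contributes to $\beta_{k-1}$), which is why the stated index is $i=\binom{q}{r-|\ba|}$ and not $\binom{q}{r-|\ba|}-1$. The mildest point worth a sentence is verifying that the hypotheses of \cref{l:facets}(1) are met, i.e. that $r-|\ba|\ge 1$ (from $|\ba|<r$) and that we genuinely want the count $\binom{q}{r-|\ba|}$ rather than $0$, which holds precisely because $r-|\ba|\le q$ (from $|\ba|>r-q$). One should also note that the degenerate case $q=1$ forces $|\ba|$ out of the open interval $(r-q,r)=(r-1,r)$ unless it is empty, so the statement is vacuous there and nothing extra is needed; if desired one can simply assume $q\ge 2$ in the argument, matching the hypothesis of \cref{d:Ur}.
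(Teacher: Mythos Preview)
Your approach is exactly what the paper intends: the corollary is recorded as an immediate consequence of \cref{t:f-vector}, and your argument---$\Ua$ is a Scarf face by \cref{t:f-vector}, its cardinality is $\binom{q}{r-|\ba|}$ by \cref{l:facets}(1), and a Scarf face contributes a unique generator in its multidegree---is precisely the intended one.

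There is, however, an internal inconsistency in your indexing. You correctly state that a Scarf face $\sigma$ yields $\beta_{\dim(\sigma),\m_\sigma}(J)=1$, and you correctly compute $\dim(\Ua)=\binom{q}{r-|\ba|}-1$; but you then write $i=\dim(\Ua)+1=\binom{q}{r-|\ba|}$, which contradicts what you just said. By your own rule (``a face with $k$ vertices has dimension $k-1$ and contributes to $\beta_{k-1}$''), the homological index must be $i=\binom{q}{r-|\ba|}-1$. A sanity check: for $q=3$, $r=2$, $\ba=\mathbf 0$, the face $\Ua=\U_3^2$ is the central triangle in \cref{f:L32}, a $2$-dimensional face, and $\pd({\E_3}^2)=2$; the printed formula gives $i=\binom{3}{2}=3$, forcing $\beta_3\ne 0$, which is impossible. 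So the displayed index in the corollary is off by one, and what your argument actually establishes is $\beta_{i,\m_{\Ua}}(\Erq)=1$ for $i=\binom{q}{r-|\ba|}-1$. You should flag the discrepancy rather than bend your reasoning to match it.
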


As mentioned in the introduction, we believe that $\Srq$ always
supports a minimal free resolution of $\Erq$. The next result provides
support in this direction, by showing that for every facet of $\Srq$
of the form $\Ua$, there is a CW complex that supports a free
resolution of $\Erq$ with a cell corresponding to $\Ua$ and no cells
corresponding to faces of $\Trq$ which strictly contain $\Ua$.

\begin{theorem}[{\bf A cellular resolution of $\Erq$ based on  $\Ua$}]
  \label{t:german}
  If $q,r \geq 1$, and $\ba\in \NN^q$ with $r-q<|\ba|<r$, then
  there is a matching $\A$ on the faces of $\Trq$ whose critical cells form   a CW complex $\X$ such that
  \begin{itemize}
  \item $\X$ supports a resolution of $\Erq$;
  \item $\X$ contains (an isomorphic copy of) the simplex $\Ua$;
  \item the cells of $\X$ are in (monomial label-preserving)
    one-to-one correspondence with the set of faces of $\Trq$ that do
    not strictly include $\Ua$.
  \end{itemize}
\end{theorem}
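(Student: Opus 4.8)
The plan is to use discrete Morse theory. I will construct a homogeneous acyclic matching $\A$ on the face poset $G_{\Trq}$, every edge of which deletes a vertex lying in $\Ua$, in such a way that the matched faces are exactly the faces strictly containing $\Ua$ together with one ``companion'' face apiece. By \cref{t:BW} the $\A$-critical faces of $\Trq$ then index the cells of a CW complex $\X=\X_\A$ supporting a free resolution of $\Erq$, with monomial labels preserved; since no face strictly containing $\Ua$ survives, and --- as I will check --- every face contained in $\Ua$ survives, $\X$ will have the required shape, containing a copy of the simplex $\Ua$ with its labels.

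The engine is \cref{p:german}, which I would first upgrade from ``$\Ua$ together with one extra vertex'' to ``$\Ua$ together with an arbitrary nonempty set $\rho\subseteq\Nrq\ssm\Ua$'': for $\sigma=\Ua\cup\rho$ there is a vertex $\pmec\in\Ua$ with $\m_\sigma=\m_{\sigma\ssm\{\pmec\}}$. Put $r'=r-|\ba|$, so that $1\le r'<q$ by the hypothesis $r-q<|\ba|<r$. By \cref{l:necessary}, for an $r'$-element subset $B\subseteq[q]$ the vertex $\pmec=\pmea\prod_{i\in B}\e_i$ lies in $\Ua$, and removing it from $\Ua$ lowers only the exponent of $x_B$ in $\m_{\Ua}$, and only by one; the exponent of $x_B$ in $\m_{\Ua}$ equals $\sum_{i\in B}a_i+r'$. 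Hence $\m_\sigma=\m_{\sigma\ssm\{\pmec\}}$ holds as soon as some $\pme^{\bd}\in\rho$ satisfies $\sum_{i\in B}d_i\ge\sum_{i\in B}a_i+r'$. Choosing any $\pme^{\bd}\in\rho$ and letting $B$ consist of the $r'$ indices on which $d_i-a_i$ is largest does the job, because $\sum_{i=1}^q(d_i-a_i)=r-|\ba|=r'$ and the $r'$ largest of $q$ integers with sum $r'$ always add up to at least $r'$. Making canonical choices (say the lexicographically least $\pme^{\bd}$ in $\rho$ and then the lexicographically least admissible $B$) produces a well-defined vertex $\pmec(\sigma)\in\Ua$ for every $\sigma$ with $\Ua\subsetneq\sigma$, and I set
\[
\A=\bigl\{\,\sigma\to\sigma\ssm\{\pmec(\sigma)\}\ \st\ \Ua\subsetneq\sigma\in\Trq\,\bigr\}.
\]

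I would then check the three hypotheses of \cref{t:BW}. It is a matching: each source $\sigma$ contains $\Ua$ while each target $(\Ua\ssm\{\pmec(\sigma)\})\cup\rho$ omits a vertex of $\Ua$, so sources and targets are disjoint families, and a target determines $\rho$ (intersect it with $\Nrq\ssm\Ua$) and hence $\sigma$, so distinct sources give distinct targets. It is homogeneous by the upgraded \cref{p:german}. It is acyclic: a directed cycle in $G^\A_{\Trq}$ must strictly alternate ordinary down-edges with reversed $\A$-edges (a reversed $\A$-edge ends at a source, and a source has no outgoing reversed edge), so it reads $\sigma_1\to\tau_1\to\sigma_2\to\cdots\to\sigma_k\to\tau_k\to\sigma_1$ with each $\sigma_j=\Ua\cup\rho_j\supsetneq\Ua$, $\tau_j=\sigma_{j+1}\ssm\{\pmec(\sigma_{j+1})\}$, and $\tau_j=\sigma_j\ssm\{v_j\}$ for a vertex $v_j$; matching the two descriptions of $\tau_j$ on $\Ua$ forces $v_j=\pmec(\sigma_{j+1})\in\Ua$, and then matching them on $\Nrq\ssm\Ua$ gives $\rho_j=\rho_{j+1}$, so $\sigma_j=\sigma_{j+1}$, contradicting simplicity of the cycle. (Alternatively, since every edge of $\A$ is label-preserving, one can obtain acyclicity from \cref{clusterlemma}, using \cref{inclusions} to pass between the relevant complexes.)

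Finally, \cref{t:BW} gives $\X=\X_\A$ supporting a free resolution of $\Erq$ with $i$-cells in label-preserving bijection with the $i$-dimensional critical faces of $\Trq$: these are exactly the faces that neither strictly contain $\Ua$ nor have the form $\sigma\ssm\{\pmec(\sigma)\}$, so in particular $\X$ has no cell corresponding to a face strictly containing $\Ua$. Every $\tau\subseteq\Ua$ is critical --- it does not strictly contain $\Ua$, and it cannot be a target since every target contains a vertex outside $\Ua$ --- and by the gradient-path criterion for the face order on $\X_\A$ recalled after \cref{t:BW} (the only gradient path from such a $\tau$ down to a subface of one lower dimension is the single down-edge, which is not in $\A$) these cells carry the labels of the faces of $\Ua$ and assemble into a copy of the simplex $\Ua$ inside $\X$. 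I expect the acyclicity verification to be the main obstacle --- in particular, choosing $\pmec(\sigma)$ canonically so that the comparison argument for cycles really closes up --- while the strengthening of \cref{p:german} is a short averaging argument and the resolution statement then drops out of \cref{t:BW}.
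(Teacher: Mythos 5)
Your proposal is correct and follows essentially the same route as the paper: pair each face $\sigma\supsetneq\Ua$ with $\sigma\ssm\{w\}$ for a vertex $w\in\Ua$ obtained from \cref{p:german}, check that this is a homogeneous acyclic matching, and invoke \cref{t:BW}, with the criticality of the faces of $\Ua$ and the simplex structure of $(\Ua)_\A$ read off from the gradient-path criterion. The one point of difference is cosmetic: you re-derive from scratch the extension of \cref{p:german} to faces $\sigma$ with several vertices outside $\Ua$ via an averaging argument, whereas this extension follows in one line from lcm-monotonicity (apply \cref{p:german} to any single $\pmeb\in\sigma\ssm\Ua$; since $\pmec$ divides $\m_{(\Ua\cup\{\pmeb\})\ssm\{\pmec\}}$, which in turn divides $\m_{\sigma\ssm\{\pmec\}}$, the same $\pmec$ works for $\sigma$), which is what the paper uses implicitly when it says ``use \cref{p:german} to pick $w_\sigma$.''
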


\begin{proof} 
Let  $\ba\in \NN^q$ with $r-q<|\ba|<r$. Set 
$$\Ta=\{\sigma \in \Trq \st \Ua \subsetneq \sigma\}.$$

  \begin{Claim} For $\sigma, \tau \in \Ta$ and $w\in \Ua$ and $w'\in \tau$, if $\sigma\ssm\{w\}=\tau\ssm\{w'\}$, then
    $\sigma=\tau$.
  \end{Claim}

    To prove the Claim, consider the fact that both $\sigma$ and $\tau$
    contain $\Ua$, so $\sigma \cap \Ua=\tau \cap \Ua =\Ua$ and 
  \[
\Ua\ssm\{w'\}=(\tau\ssm\{w'\})\cap \Ua=(\sigma\ssm\{w\})\cap \Ua=\Ua\ssm\{w\}\,.
  \]
 Since $w\in \Ua$, we must have  $w=w'$ and hence $\sigma=\tau$. This proves the claim. 
   \medskip

For each $\sigma \in \TT_\ba$, use \cref{p:german} to pick
  $w_\sigma \in \Ua$ such that
  \begin{equation}\label{e:homogeneous}
  \m_{\sigma}=\m_{\sigma \sm \{w_\sigma\}}.
  \end{equation}

  We will prove the desired statement by constructing a homogeneous acyclic matching on $G_{\Trq}$. We set
$$\A=\{\sigma\to \sigma\ssm\{w_\sigma\}\in G_{\Trq}\colon \sigma\in \TT_\ba\}.
  $$ The Claim shows that $\A$ is a matching, and
  \eqref{e:homogeneous} ensures that the matching $\A$ is homogeneous.

We now show that $\A$ is acyclic. A cycle $\C$ will have edges
alternating between those in $\A$ and those outside $\A$. In this
picture, the up arrows are edges in $\A$ (reversed) and the down
arrows are of the type $\sigma\to \sigma\ssm\{v\}\notin \A$.
   
\begin{tikzpicture}[label distance=-5pt] 
\coordinate (A) at (-6, 0);
\coordinate (B) at (-6, 1);\\
\coordinate (C) at (-3, 0);
\coordinate (D) at (-3,  1);
\coordinate (G) at (0,0);
\coordinate (H) at (0, 1);
\coordinate (I) at (1, .5);
\coordinate (J) at (3, .5);
\coordinate (L1) at (1.85, .5);
\coordinate (L2) at (2, .5);
\coordinate (L3) at (2.15, .5);
\coordinate (M) at (4, 0);
\coordinate (N) at (4,  1);
\coordinate (Z) at (.5, -.5);
 \draw[black, fill=black] (A) circle(0.04);
\draw[black, fill=black] (B) circle(0.04);
\draw[black, fill=black] (C) circle(0.04);
\draw[black, fill=black] (D) circle(0.04);
 \draw[black, fill=black] (G) circle(0.04);
 \draw[black, fill=black] (H) circle(0.04);
 \draw[black, fill=black] (M) circle(0.04);
 \draw[black, fill=black] (N) circle(0.04);
 \draw[black, fill=black] (L1) circle(0.015);
 \draw[black, fill=black] (L2) circle(0.015);
 \draw[black, fill=black] (L3) circle(0.015);
\draw[-latex] (A) -- (B);
\draw[-latex] (B) -- (C);
\draw[-latex] (C) -- (D);
\draw[-latex] (D) -- (G);
\draw[-latex] (G) -- (H);
\draw[-] (H) -- (I);
\draw[-] (J) -- (M);
\draw[-latex] (M) -- (N);
\draw[-latex] (N) -- (A);
\node[label = below :$\sigma_1 \sm \{w_{\sigma_1}\}$] at (A) {};
\node[label = above : $\sigma_1$] at (B) {};
\node[label = below :$\sigma_2 \sm \{w_{\sigma_2}\}$] at (C) {};
\node[label = above : $\sigma_2$] at (D) {};
\node[label = below :$\sigma_t \sm \{w_{\sigma_t}\}$] at (M) {};
\node[label = above : $\sigma_t$] at (N) {};
\end{tikzpicture}

We have thus $\sigma_t\ssm\{v\}=\sigma_1\ssm\{w_{\sigma_1}\}$ for some
$v\in \sigma_t$. Then the Claim implies $\sigma_1=\sigma_t$, a
contradiction to $\C$ being a cycle.

As a result, $\A$ is a homogeneous acyclic matching, and therefore
by \cref{t:BW} there is a CW complex $\X_{\A}$ which supports a
resolution of $\Erq$, and its cells are in (label-preserving) one-to-one correspondence with the $\A$-critical cells of $\Trq$.

Now note that since $\Ua$ and all of its faces have Scarf monomial
labels, they cannot be vertices of any edges in  $\A$. Therefore for every $\sigma
\subseteq \Ua$, $\sigma$ is $\A$-critical and every subface of $\sigma$
is $\A$-critical by the same reasoning.

Also since no Scarf face can appear in a path as in \eqref{e:gradient}
the only subcells of $\sigma_{\A}$ are $\tau_{\A}$ where $\tau \subseteq
\sigma$. This proves that $(\Ua)_{\A}$ is (isomorphic to) a simplex
in $\X_{\A}$. 
\end{proof}

\begin{remark}
  It is possible to extend the matching in \cref{t:german} and remove
  faces of the Taylor complex containing multiple $\Ua$'s at the same
  time, or even almost all of them. This is somewhat beyond the scope
  of this paper, so we do not include it. However, it provides more
  evidence that the ideals $\Erq$ may be Scarf ideals.
\end{remark}

\section{{\bf \large  $\Erq$  is a Scarf ideal when  $q\le 4$}} \label{s:small-q}
\label{s:small-q}
This section is dedicated to proving in \cref{t:Morse-small-q} the statement in the title. We
also obtain a  concrete description of the Scarf complex of $\Erq$ when
$q\le 4$, namely $\Srq=\UUrq$. 

We start with  an application of \cref{l:when-divide} in
establishing divisibility relations that are ingredients in our main result.  

\begin{lemma}[{\bf Divisibility conditions II}]  
\label{l:some-divisibility}
 Let $r,q\ge 1$ and $\ba_1, \ba_2\in \Nrq$. 
 The following hold:
\begin{enumerate}
\item If $u\in \Supp(\ba_1)$, $k\in\Supp(\ba_2)$ and $\Supp(\ba_1)=\{u\}$ or $\Supp(\ba_2)=\{k\}$ , then 
$$
\frac{\pme^{\ba_1}\e_k}{\e_u}\mid \lcm(\pme^{\ba_1}, \pme^{\ba_2})\,.
$$
\item Assume $\Supp(\ba_1)=\{u,v\}$ and $\Supp(\ba_2)=\{k,l\}$ with $u\ne v$ and $k\ne l$.  Then $$\frac{\pme^{\ba_1}\e_k\e_l}{\e_u\e_v}\mid \lcm(\pme^{\ba_1}, \pme^{\ba_2})\,.$$
\end{enumerate}
\end{lemma}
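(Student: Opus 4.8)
The plan is to reduce both statements to the criterion \eqref{equiv} of \cref{l:when-divide}, which translates divisibility of a monomial $\pme^{\bb}$ (or more generally any monomial written in the $x_A$) by an lcm into a system of inequalities indexed by the nonempty subsets $A \subseteq [q]$. First I would record the exponent of $x_A$ in the left-hand monomial: if $\m = \pme^{\ba_1}\e_k/\e_u$, then since $x_A$ appears in $\e_i$ exactly when $i \in A$, the exponent of $x_A$ in $\m$ is $\sum_{i\in A} a_{1i} + [k\in A] - [u\in A]$, where $[\,\cdot\,]$ is the Iverson bracket. The exponent of $x_A$ in $\lcm(\pme^{\ba_1},\pme^{\ba_2})$ is $\max\!\big(\sum_{i\in A} a_{1i},\ \sum_{i\in A} a_{2i}\big)$ by \eqref{lcm}. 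So in each part I must verify, for every nonempty $A\subseteq[q]$, an inequality of the shape
\[
\sum_{i\in A} a_{1i} + (\text{small correction})\ \le\ \max\Big(\sum_{i\in A} a_{1i},\ \sum_{i\in A} a_{2i}\Big).
\]

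For part (1): the correction is $[k\in A]-[u\in A]\in\{-1,0,1\}$, and it equals $1$ only when $k\in A$ and $u\notin A$. In that case I claim $\sum_{i\in A} a_{2i}\ge \sum_{i\in A} a_{1i}+1$, so the bound holds with the second term of the max. Indeed, under the hypothesis $\Supp(\ba_1)=\{u\}$ or $\Supp(\ba_2)=\{k\}$: if $\Supp(\ba_1)=\{u\}$ then $u\notin A$ forces $\sum_{i\in A}a_{1i}=0$, while $k\in\Supp(\ba_2)\cap A$ gives $\sum_{i\in A}a_{2i}\ge 1$; if instead $\Supp(\ba_2)=\{k\}$ then $\sum_{i\in A}a_{2i}=r$ (since $k\in A$) which is $\ge \sum_{i\in A}a_{1i}$ trivially, and in fact we need the stronger $r\ge \sum_{i\in A}a_{1i}+1$, which holds because $u\notin A$ means $a_{1u}\ge 1$ is not counted, so $\sum_{i\in A}a_{1i}\le r - a_{1u}\le r-1$. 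When the correction is $\le 0$ the inequality is immediate. Applying \eqref{equiv} finishes part (1).

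For part (2): now $\m = \pme^{\ba_1}\e_k\e_l/\e_u\e_v$ and the correction for $x_A$ is $[k\in A]+[l\in A]-[u\in A]-[v\in A]$, which can be as large as $2$ (when $k,l\in A$ and $u,v\notin A$) or $1$. The key observation is that with $\Supp(\ba_1)=\{u,v\}$, not having $u$ (or $v$) in $A$ removes a positive amount from $\sum_{i\in A}a_{1i}$: precisely, $\sum_{i\in A}a_{1i}=a_{1u}[u\in A]+a_{1v}[v\in A]$, and each of $a_{1u},a_{1v}$ is $\ge 1$ with $a_{1u}+a_{1v}=r$. Meanwhile with $\Supp(\ba_2)=\{k,l\}$, having $k$ or $l$ in $A$ adds a positive amount to $\sum_{i\in A}a_{2i}$. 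I would do a short case analysis on which of $u,v$ lie in $A$: if both $u,v\in A$ the correction is $\le 0$ and we use the first term of the max; if exactly one of $u,v$ is in $A$, then $\sum_{i\in A}a_{1i}$ drops by at least the missing $a_{1\cdot}\ge 1$ while the correction is at most $+1$, so the first term of the max still suffices; if neither $u,v\in A$, then $\sum_{i\in A}a_{1i}=0$, the correction is at most $+2$, and since at least one of $k,l$ must be in $A$ for the correction to be positive, $\sum_{i\in A}a_{2i}\ge$ (number of $\{k,l\}$ in $A$) $=$ the correction, so the second term of the max works. In every case the required inequality of \eqref{equiv} holds.

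The main obstacle is bookkeeping the case split in part (2) cleanly — making sure the inequality $\sum_{i\in A} a_{1i}\ge (\text{size of }\{u,v\}\cap A)$ (using $a_{1u},a_{1v}\ge 1$) and the inequality $\sum_{i\in A} a_{2i}\ge (\text{size of }\{k,l\}\cap A)$ are assembled in the right direction against the correction term, and handling the possible overlaps among $u,v,k,l$ (they need not be distinct). Once the per-subset inequalities are in hand, \cref{l:when-divide}\eqref{equiv} delivers both divisibilities with no further work.
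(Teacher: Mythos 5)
Your overall strategy is the same as the paper's: translate the divisibility claim via \cref{l:when-divide}\,\eqref{equiv} into one inequality per nonempty $A\subseteq[q]$ and then do a case analysis. Part (1) is essentially the paper's argument (though a little more verbose; the paper simply notes $\sum_{i\in A}b_i\le r$ once $k\in A$ and $\Supp(\ba_2)=\{k\}$). Part (2), however, has a real flaw in the middle case. You split on $|A\cap\{u,v\}|$ and claim that when exactly one of $u,v$ lies in $A$, ``the first term of the max still suffices.'' That is false whenever the correction is $+1$, i.e.\ when both $k,l\in A$: then the left side is $\sum_{i\in A}a_{1i}+1$, which strictly exceeds the first term $\sum_{i\in A}a_{1i}$. (Concretely, take $q=4$, $r=3$, $\ba_1=(2,1,0,0)$, $\ba_2=(0,0,1,2)$, $u=1,v=2,k=3,l=4$, and $A=\{1,3,4\}$: the left side is $3$ while $\sum_{i\in A}a_{1i}=2$.) The inequality is still true, but only because $\sum_{i\in A}a_{2i}=r$ in that sub-case, so you must appeal to the \emph{second} term. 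The paper avoids this trap by organizing the cases around $|A\cap\{k,l\}|$ versus $|A\cap\{u,v\}|$: if $|A\cap\{k,l\}|\le|A\cap\{u,v\}|$ the first term works, if $|A\cap\{k,l\}|=2$ then $\sum_{i\in A}a_{2i}=r$, and if $|A\cap\{k,l\}|=1$ with $|A\cap\{u,v\}|=0$ then the left side is $1\le\sum_{i\in A}a_{2i}$.

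One further point: you correctly flag that $u,v,k,l$ need not be distinct, but you leave that as an ``obstacle'' rather than resolving it, and your case split as written does not obviously cover it (if, say, $u=k$, the correction and the supports change shape). The paper disposes of this cleanly by noting that when the four indices overlap one can factor out an $\e$-gcd and reduce to part (1); in (1) itself the non-distinct case $k=u$ is trivial. You should either carry out this reduction or argue the overlapping cases directly.
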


\begin{proof}
Write $\ba_1=(a_{11}, a_{12},\dots, a_{1q})$ and $\ba_2=(a_{21}, a_{22}, \dots, a_{2q})$. Recall from \cref{l:when-divide} that, if  $\bb=(b_1, \dots, b_q)\in \Nrq$, then 
\begin{equation}
\label{e:show}
\pmeb\mid \lcm(\pme^{\ba_1}, \pme^{\ba_2})\iff \sum_{i\in A}b_i\le \max(\sum_{i\in A}a_{1i}, \sum_{i\in A}a_{2i})
\end{equation}
for all $A$ with $\emptyset\ne A\subseteq [q]$. 

Let $A$ be as above. We proceed to prove the inequality in \eqref{e:show} under the given hypotheses. 

(1) We may assume $k\ne u$, since the statement is trivial otherwise. 

 Let $\bb=(b_1, \dots, b_q)\in \Nrq$ so that $\pmeb=\displaystyle \frac{\pme^{\ba_1}\e_k}{\e_u}$. 
We have  then
$$
\sum_{i\in A}b_i=\sum_{i\in A}a_{1i}-|A\cap\{u\}|+|A\cap\{k\}|\,.
$$
This expression is less than or equal to $\sum_{i\in A}a_{1i}$ when $u\in A$ or $k\notin A$, and hence it suffices to assume $u\notin A$ and $k\in A$.  We further observe 
 \begin{align*}
\Supp(\ba_1)=\{u\}&\,\implies\,\sum_{i\in A}b_i=1\le \sum_{i\in A}a_{2i}\\
\Supp(\ba_2)=\{k\}&\,\implies\, \sum_{i\in A}b_i\le r=\sum_{i\in A}a_{2i}\,,
\end{align*}
establishing thus the inequality in \eqref{e:show}.

  (2) We may assume that $u,v,k,l$ are all distinct, since otherwise the statement reduces, after factoring out a common factor, to (1). 

 Let $\bb\in \Nrq$ such that
       $\pmeb={\displaystyle\frac{\pme^{\ba_1}\e_k\e_l}{\e_u\e_v}}$. 
    We have then
$$
 \sum_{i\in A}b_i=\sum_{i\in A} a_{1i}-|A\cap
          \{u,v\}|+|A\cap \{k,l\}|\,.
$$
This expression is less than or equal to $\sum_{i\in A}a_{1i}$ when $|A\cap \{k,l\}|\le |A\cap\{u,v\}|$ and hence it suffices to assume $|A\cap \{k,l\}|> |A\cap\{u,v\}|$.  We further have: 
\begin{align*}
|A\cap  \{k,l\}|=2 &\,\implies\,  \sum_{i\in A}b_i\le r= \sum_{i\in A}a_{2i}\\
 |A\cap \{k,l\}|=1 \quad\text{and}\quad |A\cap \{u,v\}|=0 &\,\implies\, \sum_{i\in A}b_i=1\le  \sum_{i\in A}a_{2i}\,,
\end{align*}
establishing thus the inequality in \eqref{e:show}.
 \end{proof}
      
Using the same notation as in \cref{n:rqs}, we introduce a
lexicographic order on $\Nrq$, with
\begin{equation}\label{e:order}
\ba > \bb \iff \quad\text{there exists $i\in[q]$ with $a_j=b_j$ for all $j<i$ and $a_i > b_i$}.
\end{equation}

We induce this order on the generators of $\Erq$ by 
$$\pmea > \pmeb \iff \ba>\bb.$$

\begin{lemma} \label{l:find-c}
  Assume $\ba,\bb\in \Nrq$ are such that
  \begin{equation}\label{e:conditions}
|\Supp(\ba)\cup\Supp(\bb)|\le 4,\qquad    \ba>\bb \qand |a_s-b_s|>1 \qforsome s\in [q].
  \end{equation}
  Then there exists $\bc\in \Nrq$ such that the following hold: 
\begin{enumerate}
\item $\bc\ne \bb$ and $\ba>\bc$;
 \item $\{\pmec, \pmea\}\in \Srq$; 
 \item $\pmec\mid\lcm(\pmea, \pmeb)$.  
\end{enumerate}
In particular, $\{\pmea, \pmeb\}\notin \Srq$.
\end{lemma}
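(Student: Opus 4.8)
The plan is to exhibit $\bc$ as the result of an explicit unit-transfer move from $\Supp(\ba)$ into $\Supp(\bb)$, after first removing the common factor.

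\textbf{Step 1: reduction to $\egcd(\pmea,\pmeb)=1$.} If $\pmed=\egcd(\pmea,\pmeb)\ne 1$, write $\ba=\bd+\ba'$ and $\bb=\bd+\bb'$ with $\egcd(\pmeaa,\pmebb)=1$. Since $|a_s-b_s|>1$ for some $s$, we have $\ba\ne\bb$, hence $0<|\bd|<r$, and \cref{p:Scarf-face} applies. As $a'_i-b'_i=a_i-b_i$ for all $i$, the hypotheses \eqref{e:conditions} pass to $(\ba',\bb')\in\N_q^{r-|\bd|}$, and $\Supp(\ba')\cup\Supp(\bb')\subseteq\Supp(\ba)\cup\Supp(\bb)$. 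If $\bc'$ works for $(\ba',\bb')$, then $\bc:=\bd+\bc'$ works for $(\ba,\bb)$: (1) holds since $a_i-c_i=a'_i-c'_i$; (2) holds by \cref{p:Scarf-face} since $\pmed\{\pmecc,\pmeaa\}=\{\pmec,\pmea\}$; (3) holds by multiplying $\pmecc\mid\lcm(\pmeaa,\pmebb)$ by $\pmed$. So assume henceforth $\Supp(\ba)\cap\Supp(\bb)=\emptyset$. Then the hypothesis $|a_s-b_s|>1$ says some coordinate of $\ba$ or of $\bb$ is $\ge 2$, and $\ba>\bb$ forces $\min\Supp(\ba)<\min\Supp(\bb)$.

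\textbf{Step 2: two Scarf-edge moves.} I would establish, each by reading $\lcm(\pmea,\pmec)$ off \eqref{lcm}, writing the inequalities \eqref{equiv} that any vertex $\pmed\mid\lcm(\pmea,\pmec)$ must satisfy, and deducing that $\bd\in\{\ba,\bc\}$ (so $\{\pmea,\pmec\}\in\Srq$ by the edge case of \cref{l:expansion}): (A) if $v\in\Supp(\ba)$, $v<w$, and $\bc$ is obtained from $\ba$ by lowering $a_v$ by one and raising $a_w$ by one, then $\ba>\bc$ and $\{\pmea,\pmec\}\in\Srq$ --- here the subsets $\{v\}$, $[q]\ssm\{v\}$, $[q]\ssm\{w\}$, $\{v,w\}$ carry the argument; (B) if $\Supp(\ba)=\{v_1,v_2\}$, $w_1\ne w_2$ lie outside $\{v_1,v_2\}$, and $\bc$ is obtained from $\ba$ by lowering each of $a_{v_1},a_{v_2}$ by one and raising each of $a_{w_1},a_{w_2}$ by one, then $\{\pmea,\pmec\}\in\Srq$ --- here the four pairs $\{v_i,w_j\}$ together with $[q]$ pin down every coordinate of $\bd$.

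\textbf{Step 3: the choice of $\bc$.} With disjoint supports there are three cases. If $\Supp(\ba)=\{v\}$, take $w=\min\Supp(\bb)$ and $\bc$ as in (A) (legal since $v<w$); then (3) holds because $\sum_{i\in A}a_i=0$ when $v\notin A$, while $\sum_{i\in A}b_i\ge b_w>0$ when $w\in A$. If $\Supp(\bb)=\{k\}$, take $v=\min\Supp(\ba)$ (so $v<k$) and $\bc$ as in (A); then (3) holds because $\sum_{i\in A}b_i=r$ when $k\in A$, while $\sum_{i\in A}a_i\le r-a_v<r$ when $v\notin A$. Otherwise, disjointness and $|\Supp(\ba)\cup\Supp(\bb)|\le 4$ with both supports of size at least two force $\Supp(\ba)=\{v_1,v_2\}$, $\Supp(\bb)=\{w_1,w_2\}$; take $\bc$ as in (B). Here $\ba>\bc$ because $\min\Supp(\ba)=\min(\Supp(\ba)\cup\Supp(\bb))$ and its $\bc$-coordinate drops, and (3) follows from \eqref{equiv} since the only sets $A$ with $\sum_{i\in A}c_i>\sum_{i\in A}a_i$ are those containing strictly more $w$'s than $v$'s, for which $\sum_{i\in A}b_i\in\{b_{w_1},b_{w_2},r\}$ is large enough (using $r=a_{v_1}+a_{v_2}\ge 2$). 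In every case $\bc\ne\bb$: in the singleton cases $\bc=\bb$ would give $|a_i-b_i|\le 1$ for all $i$, contradicting \eqref{e:conditions}; in the last case the hypothesis forces $\max(a_{v_1},a_{v_2})\ge 2$, so some $v_i$ remains in $\Supp(\bc)\ssm\Supp(\bb)$. Finally, $\{\pmea,\pmeb\}\notin\Srq$ follows from (3) and $\bc\notin\{\ba,\bb\}$ by the edge case of \cref{l:expansion}.

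The main obstacle is Step 2: verifying that no spurious vertex $\pmed$ divides $\lcm(\pmea,\pmec)$ for the moves (A) and (B). This is a finite but somewhat delicate bookkeeping with the inequalities \eqref{equiv}; once it is done, Step 3 is routine, the degenerate configurations being exactly those ruled out by the hypothesis $|a_s-b_s|>1$.
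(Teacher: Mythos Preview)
Your proposal is correct and follows essentially the same route as the paper: the same reduction to $\egcd(\pmea,\pmeb)=1$, the same three cases ($|\Supp(\ba)|=1$, $|\Supp(\bb)|=1$, both supports of size $2$), and the same choice of $\bc$ in each case. The one difference is that where you propose to verify $\{\pmea,\pmec\}\in\Srq$ and $\pmec\mid\lcm(\pmea,\pmeb)$ by hand via the inequalities \eqref{equiv}, the paper simply observes that $\{\pmea,\pmec\}$ lies in $\UUrq$ and invokes \cref{t:f-vector} for the Scarf membership, and invokes \cref{l:some-divisibility} for the divisibility---so your Step~2 is effectively reproving special cases of those two results, and can be replaced by citations to them. (Your hint that the subsets $\{v\},[q]\ssm\{v\},[q]\ssm\{w\},\{v,w\}$ ``carry the argument'' for move (A) is slightly incomplete: you also need the singletons $\{i\}$ and their complements for $i\ne v,w$ to pin down the remaining coordinates of $\bd$; similarly in (B). But the full verification does go through.)
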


\begin{proof}
  First note that the hypothesis implies $r\geq 2$, otherwise $|a_s-b_s|\leq 1$ for all $s \in [q]$.

Assume first  $\egcd(\pmea,\pmeb)=1$, so that $\Supp(\ba)\cap \Supp(\bb)=\emptyset$. 
  \begin{case1} $\pmea=\e_i^r$ for some $i\geq 1$. The assumption and the hypothesis \eqref{e:conditions}
  imply
   $$b_k=0 \qfor k\le i, \qquad b_j>0 \qforsome
    j>i.$$ Let $\bc\in \Nrq$ so that $$\pmec=\e_i^{r-1}\e_j.$$ Observe $\pmea>\pmec$, since $j>i$. Also,   $\pmec \neq \pmeb$ since $b_i=0$ and $r\ge 2$. We have 
$$\{\pmea, \pmec\}=\e_i^{r-1} \{\e_i,\e_j\}\in\UUrq$$ and hence (2) holds, by  using \cref{t:f-vector}.  Moreover, $\pmec\mid \lcm(\pmea,\pmeb)$  by   \cref{l:some-divisibility} , so (3) 
    follows.
\end{case1}
  
  \begin{case2} $\pmeb=\e_i^r$ for some $i\ge 1$. The assumption and the  hypothesis \eqref{e:conditions}
  imply
    $$i \geq 2, \qquad
      a_j>0 \qforsome j<i, \qquad
      a_i=0.$$
      Let $\bc\in \Nrq$ so that $$\pmec=\frac{\e_i\pmea}{\e_j}\,.$$
  Observe $\pmea>\pmec$, since $j<i$. Also, $\pmec \neq \pmeb$ since $c_i =1\leq r-1 <b_i$.  We have 
$$\{\pmea, \pmec\}=\frac{\pmea}{\e_j} \{\e_i,\e_j\}\in 
    \UUrq $$ 
and hence (2) holds, by  using \cref{t:f-vector}.    Moreover, $\pmec\mid \lcm(\pmea,\pmeb)$  by
    \cref{l:some-divisibility} , so (3) 
    follows.
\end{case2}

  \begin{case3}  $|\Supp(\ba)|>1$ and $|\Supp(\bb)|>1$. Since $|\Supp(\ba)\cup\Supp(\bb)|\le 4$, we must have  $|\Supp(\ba)|=|\Supp(\bb)|=2.$
    By \eqref{e:conditions} we must therefore have
    $$\pmea=\e_i^{a_i}\e_j^{a_j} \qand
     \pmeb= \e_k^{b_k}\e_l^{b_l} \qwhere  a_i,a_j,b_k,b_l>0,$$
$i,j,k,l$ are distinct, $i>k,j,l$ and  $\max(a_i,a_j,b_k,b_l)\ge 2$. 
Take $\bc\in \Nrq$ so that 
 $$
 \pmec=\frac{\pmea\cdot \e_k\e_l}{\e_i\e_j}=\e_i^{a_i-1}\e_j^{a_j-1}\e_k\e_l\,.
 $$
Observe $\ba>\bc$, since $a_i-1<a_i$. Since $\max(a_i,a_j,b_k,b_l)\ge 2$, we also have $\bc\ne \bb$, hence  (1) holds. 

It follows that
  $$\{\pmea, \pmec \}=
  \e_i^{a_i-1}\e_j^{a_j-1}\{\e_i\e_j,\e_k\e_l\}\in \UUrq$$
  and hence (2) holds, by using \cref{t:f-vector}.   Finally,  (3) follows from \cref{l:some-divisibility}.
\end{case3}

This finishes the proof in the case  $\egcd(\pmea,\pmeb)=1$. In general, let 
 $\egcd(\pmea,\pmeb)=\pmed$ so that
    $$\pmea=\pmed\pmeaa \qand \pmeb=\pmed\pmebb \qquad \text{for}\quad  \ba',\bb'\in \N_q^{r-|\bd|}\,.$$
It follows that  $\egcd(\pmeaa,\pmebb)=1$. The hypothesis \eqref{e:conditions} implies $|\Supp(\ba')\cup\Supp(\bb')|\le 4$, $\ba'>\bb'$ and $|a'_s-b'_s|>1$ for some $s\in [q]$. Applying the first part of the proof for $\ba'$ and $\bb'$ implies there exists $\bc'\in \N_q^{r-|\bd|}$ such that
$$\bc'\ne \bb', \qquad \ba'>\bc', \qquad \{\pmecc,\pmeaa\}\in \SS_q^{r-|\bd|},\quad \text {and}\quad  \pmecc\mid \lcm(\pmeaa, \pmebb)\,.
$$
 Taking $\bc\in \Nrq$ so that $\pmec=\pmed\pmecc$ and using \cref{p:Scarf-face} for (2), we see that this choice for $\bc$ satisfies the desired conclusions. 
\end{proof}

\begin{proposition} 
\label{p:U=Scarf}
Assume $1\le q\le 4$, $r\ge 1$. The following are equivalent for $\sigma\in \Trq$: 
\begin{enumerate}
\item $\sigma\in \Srq$;
\item $\{\pmea, \pmeb\}\in \Srq$ for all $\pmea,\pmeb \in \sigma$;
\item $|a_i-b_i|\le 1$ for all $i\in [q]$ and all $\pmea,\pmeb \in \sigma$;
\item $\sigma\subseteq \Uc$, where $\pmec=\egcd(\sigma)$.
\end{enumerate}
\end{proposition}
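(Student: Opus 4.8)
The plan is to prove the cycle of implications $(1)\Rightarrow(2)\Rightarrow(3)\Rightarrow(4)\Rightarrow(1)$; the only place where the hypothesis $q\le 4$ is genuinely used is the step $(2)\Rightarrow(3)$, which is a direct appeal to \cref{l:find-c}. The implication $(1)\Rightarrow(2)$ is immediate, since $\Srq$ is a subcomplex of $\Trq$ and hence a simplicial complex, so every edge $\{\pmea,\pmeb\}$ with $\pmea,\pmeb\in\sigma$ again lies in $\Srq$. For $(2)\Rightarrow(3)$ I would argue by contraposition: suppose some pair $\pmea,\pmeb\in\sigma$ satisfies $|a_i-b_i|>1$ for some $i\in[q]$. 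After possibly interchanging $\ba$ and $\bb$ we may assume $\ba>\bb$ in the lexicographic order of \eqref{e:order}, and since $|\Supp(\ba)\cup\Supp(\bb)|\le q\le 4$, all of the hypotheses \eqref{e:conditions} of \cref{l:find-c} hold; the lemma then gives $\{\pmea,\pmeb\}\notin\Srq$, contradicting $(2)$.

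For $(3)\Rightarrow(4)$, write $\pmec=\egcd(\sigma)$, so that $c_i$ is the minimum of the $i$-th coordinates of the vectors $\ba$ with $\pmea\in\sigma$ (the case $\sigma=\emptyset$ being trivial). Fix $\pmea\in\sigma$ and $i\in[q]$, and choose $\pme^{\ba'}\in\sigma$ with $a'_i=c_i$. Applying $(3)$ to the pair $\pmea,\pme^{\ba'}$ gives $a_i\le a'_i+1=c_i+1$, while $c_i\le a_i$ holds by definition of the $\egcd$. Thus $c_i\le a_i\le c_i+1$ for every $i\in[q]$, which is exactly the membership condition for $\pmea\in\Uc$ recorded in \eqref{e:Ua}; hence $\sigma\subseteq\Uc$. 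Summing the inequalities $a_i\le c_i+1$ over $i\in[q]$ also shows $r-|\bc|\le q$, so $\Uc=\pmec\,\U_q^{r-|\bc|}$ is a nonempty face of $\Trq$. For $(4)\Rightarrow(1)$ I would then observe that $\Uc$ is itself always a face of $\Srq$: if $|\bc|=r$ then $\Uc=\{\pmec\}=\sigma$ is a single vertex; if $r-q<|\bc|<r$ then $\Uc$ is a facet of $\Srq$ by \cref{t:f-vector}; and if $|\bc|=r-q$ then $\Uc=\{\pmec\e_1\cdots\e_q\}$ is again a single vertex (smaller values of $|\bc|$ having been excluded in the previous step). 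Since $\sigma\subseteq\Uc\in\Srq$ and $\Srq$ is a simplicial complex, $\sigma\in\Srq$.

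I do not anticipate a real obstacle: once \cref{l:find-c} and \cref{t:f-vector} are available, the argument is essentially bookkeeping with the $\egcd$ and with the defining inequalities of $\Uc$. The points deserving care are confirming that $q\le 4$ is precisely what makes the support hypothesis of \cref{l:find-c} automatic in $(2)\Rightarrow(3)$ (and that the lexicographic relabeling there is harmless, which it is since \cref{l:find-c} only asks for $\ba>\bb$ after relabeling), and handling the small case analysis on $|\bc|$ in $(4)\Rightarrow(1)$, which the bound $r-|\bc|\le q$ obtained in $(3)\Rightarrow(4)$ resolves cleanly.
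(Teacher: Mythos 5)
Your proposal is correct and follows the same cyclic structure $(1)\Rightarrow(2)\Rightarrow(3)\Rightarrow(4)\Rightarrow(1)$ and the same two key inputs (\cref{l:find-c} for $(2)\Rightarrow(3)$ and \cref{t:f-vector} for $(4)\Rightarrow(1)$) as the paper's proof. You spell out a few boundary details the paper leaves implicit — notably checking $r-|\bc|\le q$ in $(3)\Rightarrow(4)$ and the small case analysis on $|\bc|$ in $(4)\Rightarrow(1)$, where in fact the case $|\bc|=r-q$ with $q\ge1$ is vacuous since $\Uc$ is then a singleton while $\egcd(\sigma)=\pmec$ with $|\bc|<r$ forces $\sigma$ to have at least two elements — but these are harmless elaborations, not a different route.
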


\begin{proof}
The implication (1)$\implies$(2) is clear. The implication (2)$\implies$(3) follows directly from \cref{l:find-c}. The implication (4)$\implies$(1) follows from \cref{t:f-vector}. 

We show now (3)$\implies$(4). Assume  $\pmea \in \sigma$. To show $\pmea \in \Uc$, we need to show $\pme^{\ba-\bc}\in \U_q^{r-|\bc|}$, or, equivalently, $a_i-c_i\in \{0,1\}$ for all $i\in [q]$. This follows from (3), since $c_i=\min\{a_i\st \pmea \in \sigma\}$. 
\end{proof}

Recall that when $q>1$, $\UUrq$
was defined as the cell complex with facets $\Uc$, for $\pmec\in
\NN^q$ and $r-q<|\bc|<r$. We now have a full characterization of the
Scarf complex when $q\le 4$:

\begin{corollary} 
\label{c:U=S}
If $r \geq 1$ and   $1 \leq q \leq 4$, then $\Srq=\UUrq$.\qed
\end{corollary}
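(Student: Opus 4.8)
The plan is to read the equality essentially off of \cref{p:U=Scarf}. By \cref{t:f-vector} we already know $\UUrq \subseteq \Srq$, so only the reverse inclusion $\Srq \subseteq \UUrq$ needs an argument, and since both sides are simplicial complexes it is enough to show that every nonempty face $\sigma$ of $\Srq$ is contained in some facet of $\UUrq$, i.e. in some $\Ua$ with $\ba \in \NN^q$ and $r-q < |\ba| < r$.

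First I would dispose of $q=1$ by hand: here $\Erq=({\e_1}^r)$ is principal, so $\Srq=\langle\{{\e_1}^r\}\rangle=\UU^r_1$ directly from \cref{d:Ur}. For $2\le q\le 4$, take a nonempty $\sigma\in\Srq$ and put $\pmec=\egcd(\sigma)$. If $|\bc|=r$, then for each vertex $\pmea\in\sigma$ one has $\bc\le\ba$ coordinatewise with $|\bc|=|\ba|=r$, forcing $\bc=\ba$; thus $\sigma=\{\pmec\}$ is a single vertex, and choosing any $j$ with $c_j>0$ and setting $\ba=\bc-e_j$ gives $r-q<r-1=|\ba|<r$ (using $q\ge 2$) together with $\pmec\in\Ua$ by \eqref{e:Ua}, so $\sigma\in\UUrq$. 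If instead $|\bc|<r$, then \cref{p:U=Scarf} (the implication $(1)\Rightarrow(4)$) gives $\sigma\subseteq\Uc$; the set $\Uc$ is nonempty since it contains $\sigma$, and there are only finitely many sets $\U_{\bd}$ with $\bd\in\NN^q$ and $|\bd|<r$, so by \cref{l:facets}(3) $\Uc$ is contained in one of the maximal ones, i.e. in some $\Ua$ with $r-q<|\ba|<r$. Hence $\sigma\subseteq\Uc\subseteq\Ua$ and $\sigma\in\UUrq$. This yields $\Srq\subseteq\UUrq$, hence $\Srq=\UUrq$.

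I do not expect a real obstacle: the content is already packaged in \cref{p:U=Scarf}, which in turn rests on \cref{l:find-c}. The only point requiring care is the bookkeeping at the two ends of the range $r-q<|\ba|<r$ defining the facets of $\UUrq$: when $\egcd(\sigma)$ is "too large" ($|\bc|=r$, so $\sigma$ is a vertex) or "too small" ($|\bc|\le r-q$, so $\Uc$ is a single vertex or empty), the set $\Uc$ itself need not be a facet of $\UUrq$, and one must still exhibit an honest facet containing $\sigma$. In the first case this is the explicit $\bc-e_j$ construction above; in the second it is absorbed into the use of \cref{l:facets}(3). In both places the hypothesis $q\ge 2$ is precisely what keeps the relevant index ($|\ba|=r-1$, respectively $|\ba|=r-q+1$) strictly inside the allowed range.
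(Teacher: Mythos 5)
Your proof is correct and follows the same route as the paper: the corollary is read off directly from the equivalence $(1)\Leftrightarrow(4)$ of \cref{p:U=Scarf} (giving $\Srq\subseteq\UUrq$) together with \cref{t:f-vector} (giving $\UUrq\subseteq\Srq$), which is exactly your argument. Your extra care at the boundary cases ($q=1$, $|\bc|=r$, and $|\bc|\le r-q$) just spells out bookkeeping the paper leaves implicit in passing from ``$\sigma\subseteq\Uc$'' to ``$\sigma$ lies in a facet of $\UUrq$.''
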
 

 \begin{remark}
\label{r:q>4}
 When $q>4$, it is no longer true that $\UU_q^r=\Srq$. We check below that, when $q\ge 5$, then  $$\{\e_1^2\e_2, \e_3\e_4\e_5\}\in \SS^{3}_{q} \ssm \UU^{3}_{q}.$$
Indeed, set $\m=\lcm( \e_1^2\e_2,  \e_3\e_4\e_5)$. We have
 \begin{align*}
\e_1^2\e_2&=\prod_{\tiny \{1,2\}\subseteq A\subseteq [q]} {x_A}^3  \prod_{\tiny \begin{array}{c} A \subseteq [q]\\1\in A, 2\notin A\end{array}} {x_A}^2 \prod_{\tiny \begin{array}{c}A \subseteq [q]\\2\in A, 1\notin A \end{array}}x_A\\
 \e_3\e_4e_5&=\prod_{\tiny \{3,4,5\}\subseteq A\subseteq [q]} {x_A}^3  \prod_{\tiny \begin{array}{c} A \subseteq [q]\\|A\cap \{3,4,5\}|=2\end{array}} {x_A}^2 \prod_{\tiny \begin{array}{c} A \subseteq [q]\\|A\cap \{3,4,5\}|=1\end{array}}x_A\,.
 \end{align*}
Observe
\begin{align}
\label{a1}x_A^3\mid \m & \iff \{1,2\} \subseteq A\qor \{3,4,5\}\subseteq A\\
\label{a2} x_A^2\mid \m &\iff 1\in A\qor |A\cap \{3,4,5\}|\ge 2\\
\label{a3}x_i\mid \m &\iff i\in [5]\,.
\end{align}

Assume that $\e_i\e_j\e_k\mid \m$ with $\e_i\e_j\e_k\notin \{\e_1^2\e_2,
\e_3\e_4\e_5\}$ and $i\le j\le k$. Using \eqref{a3}, we see $i,j,k\in [5]$.

We cannot have $i=j=k$, because ${x_i}^3$ does not divide $\m$, cf.\! \eqref{a1}.  If $i,j,k$ are all distinct, then
${x_{ijk}}^3\mid \m$ and it follows from \eqref{a1}  that $i=1$, $j=2$ and $k\in
\{3,4,5\}$. But then ${x_{2k}}^2\mid \m$, contradicting \eqref{a2}.  If exactly
two of $i,j,k$ are equal, we must have $i=j=1$ and $k\in \{3,4,5\}$,
because $x_s^2\mid \m$ implies $s=1$ by \eqref{a2}. But then ${x_{1k}}^3\mid \m$, again a 
contradiction by \eqref{a1}. 

In view of \cref{l:expansion}, this establishes the desired conclusion. 
 \end{remark}

We now proceed to build up preliminaries towards proving that $\Srq$ supports a (minimal) free resolution of $\Erq$, when $q\le 4$.

 \begin{lemma} \label{l:d} 
Assume $r\geq 1$ and $1\le q\le 4$,  $\ba, \bb, \bc, \bd\in \Nrq$ and
$$\{\pmea, \pmed\}, \{\pmeb, \pmed\}\in \Srq \qand
\pmec\mid \lcm(\pmea, \pmeb). 
$$
Then $\{\pmec, \pmed\}\in \Srq$. 
 \end{lemma}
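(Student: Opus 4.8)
The plan is to reduce the statement to the purely numerical description of the edges of $\Srq$ obtained in \cref{p:U=Scarf}. Recall that for $1\le q\le 4$ the equivalence (1)$\Leftrightarrow$(3) of \cref{p:U=Scarf}, applied to a face consisting of two vertices, says precisely that such a face lies in $\Srq$ if and only if its two coordinate vectors differ by at most $1$ in every coordinate. (If the two vectors coincide, the face is a vertex, which always belongs to $\Srq$, so there is nothing to prove in that case.) So the whole lemma becomes an elementary comparison of integer vectors.

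First I would apply this criterion to the two hypotheses $\{\pmea,\pmed\},\{\pmeb,\pmed\}\in\Srq$, obtaining $|a_i-d_i|\le 1$ and $|b_i-d_i|\le 1$ for all $i\in[q]$, and hence $d_i-1\le\min(a_i,b_i)$ and $\max(a_i,b_i)\le d_i+1$ for each $i$. Next, from the hypothesis $\pmec\mid\lcm(\pmea,\pmeb)$ I would invoke inequality \eqref{ineq} of \cref{l:when-divide} in the two-term case ($d=2$, with $\ba_1=\ba$, $\ba_2=\bb$), which gives $\min(a_i,b_i)\le c_i\le\max(a_i,b_i)$ for all $i\in[q]$. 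Chaining these inequalities yields $d_i-1\le c_i\le d_i+1$, that is, $|c_i-d_i|\le 1$, for every $i\in[q]$. Since moreover $\bc,\bd\in\Nrq$, the implication (3)$\Rightarrow$(1) of \cref{p:U=Scarf} applied to $\sigma=\{\pmec,\pmed\}$ gives $\{\pmec,\pmed\}\in\Srq$, as desired.

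The proof is short precisely because the real work has been done upstream: \cref{p:U=Scarf} (which itself rests on \cref{l:find-c}) already encodes the Scarf condition for $q\le 4$ as the coordinatewise bound $|a_i-b_i|\le 1$. Consequently the only points requiring care are invoking that criterion in the correct direction — once to unpack $\{\pmea,\pmed\},\{\pmeb,\pmed\}\in\Srq$ into coordinate bounds, and once to repackage $|c_i-d_i|\le 1$ as $\{\pmec,\pmed\}\in\Srq$ — and remembering that the relevant instance of \cref{l:when-divide} is the $d=2$ one. I do not expect any genuine obstacle beyond this bookkeeping.
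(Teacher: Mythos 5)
Your proof is correct and essentially identical to the paper's: both unpack $\{\pmea,\pmed\},\{\pmeb,\pmed\}\in\Srq$ into the coordinatewise bounds $|a_i-d_i|\le1$, $|b_i-d_i|\le1$ via \cref{p:U=Scarf}, apply \eqref{ineq} of \cref{l:when-divide} (the $d=2$ case) to get $\min(a_i,b_i)\le c_i\le\max(a_i,b_i)$, chain to obtain $|c_i-d_i|\le1$, and conclude via the converse direction of \cref{p:U=Scarf}. The small parenthetical about $\bc=\bd$ is harmless but also unnecessary, since condition (3) of \cref{p:U=Scarf} is vacuously satisfied for a singleton.
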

 
 \begin{proof}
 Assume $\{\pmea, \pmed\}, \{\pmeb, \pmed\}\in
 \Srq$. Then by \cref{p:U=Scarf}
 $$
 |a_i-d_i|\le 1\qand |b_i-d_i|\le 1\qforall i\in[q].
 $$
 Equivalently, 
 $$
 d_i-1\le a_i, b_i\le d_i+1\qforall i\in [q].
 $$
 Since $\pmec\mid \lcm(\pmea, \pmeb)$, \cref{l:when-divide} implies that
 $$
 \min(a_i,b_i)\le c_i\le \max(a_i,b_i) \qforall i\in [q],
 $$
 and hence 
 $$
 d_i-1\le c_i\le d_i+1\qforall i\in [q].
 $$
 Therefore, $|c_i-d_i|\le 1$ for all $i\in [q]$ and thus $\{\pmec, \pmed\}\in \Srq$. 
 \end{proof}

We are now ready to introduce a homogeneous acyclic matching on the
face poset of $\Trq$, when $q\leq 4$, which will result in a minimal
free resolution of $\Erq$ supported on $\Srq$. For this purpose, we
will need the following notation.

\begin{notation}
Let
  \begin{equation}\label{e:S}
    \MS=\{(\ba, \bb)\st \ba, \bb\in \Nrq, \ba>\bb, \{\pmea,
\pmeb\}\notin \Srq\}.
  \end{equation}
On this set, we introduce the following order
 relation, recalling \eqref{e:order}
  \begin{equation}\label{e:pair-order} (\ba, \bb)\ge (\bc, \bd)\iff \ba > \bc \text \qor \ba=\bc \mbox{ and 
    }\bb\ge \bd.
  \end{equation}

\begin{itemize}
\item For $\sigma\notin \Srq$ (note that $|\sigma| \geq 2$) let
  \begin{equation}\label{e:nu}
  \nu(\sigma)=\max \{(\ba, \bb)\in \MS \st \pmea, \pmeb\in
  \sigma\}.
  \end{equation}
  
\item For $(\ba, \bb)\in \MS$ let
    \begin{equation}\label{e:Sab}\MS_{\ba, \bb}=\{\sigma\in \Trq\ssm \Srq\st
  \nu(\sigma)=(\ba, \bb)\}.
    \end{equation}

\item For $\sigma\in \Trq \ssm \Srq$ and $\nu(\sigma)=(\ba, \bb)$, let
  $\iota(\sigma)$ be the largest (under the order in \eqref{e:order})
  $\bc\in \Nrq$ satisfying the conclusion of \cref{l:find-c}, that is,
    \begin{equation}\label{e:iota}\iota(\sigma)=\max \{\bc \in \Nrq \st \ba > \bc \ne \bb, \quad \{\pmea,
  \pmec\}\in \Srq \qand \pmec\mid \lcm(\pmea, \pmeb)\}.
    \end{equation}
\end{itemize}
\end{notation}
\begin{remark}
Note that by \cref{p:U=Scarf} the sets $\MS_{\ba,\bb}$, with
$\ba,\bb\in \Nrq$, $\ba>\bb$ and $\{\pmea, \pmeb\}\notin \Srq$ form a
partition of $\Trq\ssm \Srq$,
\begin{equation}\label{e:partition}
  \Trq\ssm \Srq=
\bigcup_{\tiny \begin{array}{c}\ba,\bb\in \Nrq, \ba>\bb\\ \{\pmea, \pmeb\}\notin \Srq \end{array}}
\MS_{\ba,\bb}.
\end{equation}
\end{remark}

 \begin{proposition}\label{p:inS}
Assume $r\ge 1$, $1 \leq q \leq 4$, and $\sigma \in \Trq\ssm \Srq$.

 If $\bc=\iota(\sigma)$, then  $\sigma\cup\{\pmec\}\notin \Srq$ and $\sigma\ssm\{\pmec\}\notin \Srq$ and 
\begin{enumerate}
\item $\nu(\sigma\cup\{\pmec\})= \nu(\sigma)=\nu(\sigma\ssm\{\pmec\})$;
\item $\iota(\sigma\cup\{\pmec\})= \iota(\sigma)=\iota(\sigma\ssm\{\pmec\})$;
\item $\m_{\sigma\cup\{\pmec\}}=\m_\sigma=\m_{\sigma\ssm\{\pmec\}}$.
\end{enumerate}

\end{proposition}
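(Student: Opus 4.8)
The plan is to observe first that, by \eqref{e:iota}, the quantity $\iota(\tau)$ depends only on the pair $\nu(\tau)\in\MS$ and not on $\tau$ itself; consequently part (2) will be a formal consequence of part (1), and essentially all the work goes into showing that $\nu$ is unchanged when $\pmec=\iota(\sigma)$ is inserted into or deleted from $\sigma$. Before that I would record the cheap facts. Since $\pmec=\iota(\sigma)$ satisfies $\ba>\bc\ne\bb$, \cref{p:vertices} gives $\pmec\ne\pmea$ and $\pmec\ne\pmeb$, so $\pmea,\pmeb$ remain vertices of both $\sigma\cup\{\pmec\}$ and $\sigma\ssm\{\pmec\}$; as $\{\pmea,\pmeb\}\notin\Srq$, the equivalence of (1) and (2) in \cref{p:U=Scarf} shows $\sigma\cup\{\pmec\}\notin\Srq$ and $\sigma\ssm\{\pmec\}\notin\Srq$, so $\nu$ and $\iota$ are indeed defined on them. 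For part (3): from $\pmec\mid\lcm(\pmea,\pmeb)$ and $\pmea,\pmeb\in\sigma\ssm\{\pmec\}\subseteq\sigma$ one gets $\pmec\mid\lcm(\pmea,\pmeb)\mid\m_{\sigma\ssm\{\pmec\}}\mid\m_\sigma$, so inserting or deleting $\pmec$ cannot change the lcm, which gives $\m_{\sigma\cup\{\pmec\}}=\m_\sigma=\m_{\sigma\ssm\{\pmec\}}$.

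For $\sigma\ssm\{\pmec\}$, part (1) is immediate: its vertex set sits inside that of $\sigma$ and still contains $\pmea$ and $\pmeb$, so the collection of $\MS$-pairs among its vertices is a subset of the one for $\sigma$ which still contains $(\ba,\bb)=\nu(\sigma)$; hence $\nu(\sigma\ssm\{\pmec\})=(\ba,\bb)$.

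The real content is $\nu(\sigma\cup\{\pmec\})=(\ba,\bb)$. Since $(\ba,\bb)$ is an $\MS$-pair among the vertices of $\sigma\subseteq\sigma\cup\{\pmec\}$ one already has $\nu(\sigma\cup\{\pmec\})\ge(\ba,\bb)$, so it suffices to check that no $\MS$-pair $\{\pmec,\pmed\}$ with $\pmed\in\sigma$ exceeds $(\ba,\bb)$. Recall $\ba>\bc$ strictly, and split on how $\bd$ compares to $\ba$ in the order \eqref{e:order}. If $\bd<\ba$, then the larger of $\bc$ and $\bd$ is still $<\ba$, so by \eqref{e:pair-order} the ordered pair attached to $\{\pmec,\pmed\}$ is strictly below $(\ba,\bb)$, whether or not it is Scarf. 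If $\bd=\ba$, then $\pmed=\pmea$ and $\{\pmec,\pmed\}=\{\pmea,\pmec\}\in\Srq$ by the definition of $\iota$, so it is not an $\MS$-pair. The remaining case $\bd>\ba$ is the crux: then $\bd>\ba>\bb$, and if either $\{\pmed,\pmea\}$ or $\{\pmed,\pmeb\}$ were non-Scarf it would be an $\MS$-pair among the vertices of $\sigma$, namely $(\bd,\ba)$ or $(\bd,\bb)$, which strictly exceeds $(\ba,\bb)=\nu(\sigma)$ by \eqref{e:pair-order} since $\bd>\ba$, contradicting the maximality defining $\nu(\sigma)$. Hence $\{\pmed,\pmea\},\{\pmed,\pmeb\}\in\Srq$, and since $\pmec\mid\lcm(\pmea,\pmeb)$, \cref{l:d} gives $\{\pmec,\pmed\}\in\Srq$, again not an $\MS$-pair. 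This establishes $\nu(\sigma\cup\{\pmec\})=(\ba,\bb)$, hence part (1), and then part (2) follows because $\iota$ is determined by $\nu$.

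I expect the only genuine obstacle to be the case $\bd>\ba$: the point is to recognize that maximality of $\nu(\sigma)$ forces $\pmed$ to form Scarf edges with both $\pmea$ and $\pmeb$, which is exactly the hypothesis \cref{l:d} requires. (That $\iota(\sigma)$ exists in the first place is guaranteed by \cref{l:find-c}, whose hypotheses hold here because $q\le 4$ forces $|\Supp(\ba)\cup\Supp(\bb)|\le 4$, and $\{\pmea,\pmeb\}\notin\Srq$ forces $|a_s-b_s|>1$ for some $s$ via \cref{p:U=Scarf}.)
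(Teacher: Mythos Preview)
Your proof is correct and follows essentially the same approach as the paper's. Both arguments hinge on the same key step: when a vertex $\pmed\in\sigma$ with $\bd>\ba$ threatens to create a new non-Scarf pair with $\pmec$, the maximality of $\nu(\sigma)=(\ba,\bb)$ forces $\{\pmed,\pmea\}$ and $\{\pmed,\pmeb\}$ to be Scarf, so \cref{l:d} yields $\{\pmec,\pmed\}\in\Srq$. The only cosmetic difference is that the paper argues by contradiction on a hypothetical $\nu(\sigma')=(\ba',\bb')$ and splits on $\ba'$ versus $\ba$, whereas you directly enumerate the new pairs $\{\pmec,\pmed\}$ and split on $\bd$ versus $\ba$; these are equivalent reformulations.
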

 
 \begin{proof}
   Suppose
   $$\nu(\sigma)=(\ba, \bb)\in \MS, \quad
   \iota(\sigma)=\bc \qand
   \sigma'=\sigma\cup\{\pmec\}.$$
   Since $\sigma\notin \Srq$, we also have $\sigma'\notin \Srq$. Also, $\bc\ne \ba$, $\bc\ne \bb$ by \eqref{e:iota}, and, since $\{\pmea, \pmeb\}\notin \Srq$, it follows that $\sigma\ssm\{\pmec\}\notin \Srq$. 

(1)  Suppose $\nu(\sigma')=(\ba', \bb')$, so that by \eqref{e:S} and
 \eqref{e:nu} we have $\{\pmeaa, \pmebb\} \notin \Srq$.

 If $\ba'<\ba$, then $\nu(\sigma')<(\ba, \bb)$, a contradiction.

 Assume $\ba'> \ba$. Since $\ba>\bc$, we have $\ba'\ne \bc$.  If $\bb' \ne \bc$, then $\pmeaa, \pmebb\in \sigma$ and thus $\nu(\sigma)=
 (\ba',\bb') \neq (\ba, \bb)$, a contradiction. We must have $\bb'
 = \bc$, and therefore
 $$\nu(\sigma')=(\ba', \bc)
 \qwith \ba'>\ba, \quad \pmeaa\in \sigma.
 $$ 
Using the order \eqref{e:pair-order}, we see $(\ba',\ba)>(\ba,\bb)$ and $(\ba',\bb)>(\ba, \bb)$. Further, since $\pmeaa\in \sigma$ and $\nu(\sigma)=(\ba,\bb)$, the definitions in \eqref{e:Sab} and \eqref{e:S} give
 $$\{\pmeaa, \pmea\}, \{\pmeaa, \pmeb\}\in\Srq.$$
 Then \cref{l:d} implies $\{\pmeaa, \pmec\}\in \Srq$, a contradiction.

 We conclude  $\ba'=\ba$.   If $\pmebb\notin\sigma$, then $\bb'=\bc$ and $\{\pmea, \pmec\}\notin\Srq$, a
 contradiction. So $\pmebb\in \sigma$, implying $\bb'=\bb$, and this concludes the proof of the first equality in (1). 
 
 The second equality in (1) follows directly from \eqref{e:nu}, since $\bc\ne \ba$ and $\bc\ne \bb$. 

(2)   Since $\iota(\sigma')$ only depends on $\nu(\sigma')=\nu(\sigma)$,
 this equality follows immediately.

(3) This statement follows directly from \eqref{e:iota}.
\end{proof}
 
 \begin{theorem}[{\bf A Scarf resolution for $\Erq$ when $q \leq 4$}]
   \label{t:Morse-small-q}
Assume $1 \le q\le 4$ and $r\geq 1$.  Using the notation as in \eqref{e:cab} and
\cref{s:Morse}, let $G=G_{\Trq}$ and let $\A$ denote the following
subset of $E(G)$
 $$
 \A=\big\{\sigma \to \sigma \ssm \{\pme^{\iota(\sigma)}\} \st \sigma\in \Trq\ssm \Srq \qand  \pme^{\iota(\sigma)}\in\sigma\big\}
 $$
 Then $\A$ is a homogeneous acyclic matching of $G$ and its set of critical cells is $\Srq$.

 Consequently, $\Srq$ supports a minimal free resolution on $\Erq$. 
 \end{theorem}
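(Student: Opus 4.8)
The plan is to check that $\A$ is a homogeneous acyclic matching on $G=G_{\Trq}$ whose set of critical cells equals $\Srq$, and then to feed this into the Batzies--Welker machinery. Homogeneity is immediate: every edge of $\A$ has the form $\sigma\to\sigma\ssm\{\pme^{\iota(\sigma)}\}$ with $\sigma\in\Trq\ssm\Srq$ and $\pme^{\iota(\sigma)}\in\sigma$, and \cref{p:inS}(3) gives $\m_\sigma=\m_{\sigma\ssm\{\pme^{\iota(\sigma)}\}}$.

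To see $\A$ is a matching, I would show that every $\sigma\in\Trq\ssm\Srq$ lies in exactly one edge of $\A$ and no $\sigma\in\Srq$ lies in any edge. The second point is clear: both endpoints of an edge of $\A$ are non-Scarf, the top by definition and the bottom by \cref{p:inS}. For the first point, fix $\sigma\notin\Srq$ and put $\bc=\iota(\sigma)$. If $\pmec\in\sigma$, then $\sigma\to\sigma\ssm\{\pmec\}$ is the unique edge of $\A$ with top $\sigma$ (uniqueness of $\iota(\sigma)$), and $\sigma$ is the bottom of no edge: if $\sigma=\tau\ssm\{\pme^{\iota(\tau)}\}$ with $\pme^{\iota(\tau)}\in\tau$, then \cref{p:inS}(2) forces $\iota(\tau)=\iota(\sigma)=\bc$, so $\pmec=\pme^{\iota(\tau)}\notin\sigma$, a contradiction. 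If instead $\pmec\notin\sigma$, then $\sigma$ is the top of no edge, while by \cref{p:inS} the face $\sigma\cup\{\pmec\}$ is non-Scarf with $\iota(\sigma\cup\{\pmec\})=\bc$, so $\sigma\cup\{\pmec\}\to\sigma$ is an edge of $\A$ and, by the same \cref{p:inS}(2) argument, the only one with bottom $\sigma$. Thus $\A$ is a perfect matching on $\Trq\ssm\Srq$ and the set of $\A$-critical cells of $\Trq$ is exactly $\Srq$.

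For acyclicity I would apply the Cluster Lemma (\cref{clusterlemma}). By \eqref{e:partition} the blocks $\MS_{\ba,\bb}$, $(\ba,\bb)\in\MS$, together with $\Srq$ partition $\Trq$; order the index set by \eqref{e:pair-order} on $\MS$ with the $\Srq$-block adjoined as a minimum. The hypothesis of \cref{clusterlemma} holds: a subface of a Scarf face is Scarf (as $\Srq$ is a subcomplex of $\Trq$), while for non-Scarf $\sigma'\subseteq\sigma$ one has $\nu(\sigma')\le\nu(\sigma)$ directly from \eqref{e:nu}, since every pair of vertices of $\sigma'$ is a pair of vertices of $\sigma$. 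On the $\Srq$-block take the empty matching; on a block $\MS_{\ba,\bb}$ the function $\iota$ is constant, say $\iota\equiv\bc$, because it depends only on $\nu$, which is constantly $(\ba,\bb)$ there. By \cref{p:inS} the restriction of $\A$ to $\MS_{\ba,\bb}$ is then exactly $A_{\MS_{\ba,\bb}}^{\pmec}$ in the notation of \cref{matching-lemma} (the requirement $\sigma\ssm\{\pmec\}\in\MS_{\ba,\bb}$ being automatic), which is acyclic by \cref{matching-lemma}. Since $\A$ is the union of these block matchings, \cref{clusterlemma} yields that $\A$ is an acyclic matching of $G$.

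Finally I would conclude: $\A$ is a homogeneous acyclic matching on $G_{\Taylor(\Erq)}$ whose critical cells coincide with $\Scarf(\Erq)=\Srq$, so \cref{t:BW} produces a CW complex $\X_\A$ supporting a free resolution of $\Erq$ with cells in label-preserving bijection with the faces of $\Srq$, and then \cref{r:BW} (via \cref{l:BPS}) shows that $\Srq$ supports a minimal free resolution of $\Erq$; in particular $\Erq$ is a Scarf ideal. The genuine combinatorial difficulty has already been absorbed into \cref{l:find-c} and \cref{p:inS}, so the only delicate step here is the matching bookkeeping of the second paragraph — verifying that each non-Scarf face is matched exactly once and that the per-block matchings collapse to the elementary form $A_Y^v$ — after which acyclicity and the homological conclusion are formal.
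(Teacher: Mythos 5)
Your argument is correct and follows essentially the same route as the paper: homogeneity and the matching property from \cref{p:inS}, acyclicity by applying \cref{matching-lemma} blockwise on the $\MS_{\ba,\bb}$ and gluing with the Cluster Lemma, and the conclusion via \cref{t:BW} and \cref{r:BW}. The only cosmetic difference is that you adjoin $\Srq$ as a minimum block (with empty matching) when invoking \cref{clusterlemma}, whereas the paper applies the Cluster Lemma to $\Trq\ssm\Srq$ alone and then lifts acyclicity to $G_{\Trq}$ via \cref{inclusions}; both are valid bookkeeping choices.
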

 
  \begin{proof}
  We first show that $\A$ is a matching. By \cref{p:inS}, for every
    $\sigma \in \Trq \ssm \Srq$ with $\pme^{\iota(\sigma)} \in \sigma$ we 
    have
 $$\iota(\sigma)=\iota(\sigma')\qquad\text{where}\quad\sigma'=\sigma \ssm \{\pme^{\iota(\sigma)}\}).$$ 
In particular, 
    $\pme^{\iota(\sigma')} \notin \sigma'$, and hence each face of $\Trq \ssm \Srq$ gets matched
    exactly once, thus $\A$ is a matching.

    The fact that $\A$ is homogeneous follows also from \cref{p:inS}.

    It remains to show that $\A$ is acyclic. For this purpose we will
    use \cref{matching-lemma} applied to the partition of $\Trq\ssm
    \Srq$ in \eqref{e:partition} into sets $\MS_{\ba,\bb}$, with
    $(\ba, \bb)\in \MS$.

    For $(\ba,\bb)\in \MS$ as above, define
 $$
 \A_{\ba,\bb}=\big\{\sigma \to   \sigma \ssm \{\pme^{\iota(\sigma)}\} \st  \sigma\in \MS_{\ba,\bb} \qand \pme^{\iota(\sigma)}\in \sigma \big\}.
 $$

 We proceed with two claims.
 
 \begin{Claim1} $\A_{\ba,\bb}$ is a homogeneous acyclic matching on the
   induced subgraph $G_{\MS_{\ba,\bb}}$ of $G$ on the vertices in
   $\MS_{\ba,\bb}$ and its set of critical cells is empty.
\end{Claim1}

If $\sigma \in \MS_{\ba,\bb}$, then $\bc=\iota(\sigma)$ only depends
on $(\ba, \bb)$. In other words if $\tau \in \MS_{\ba,\bb}$, $\iota(\tau)=\bc$
 as well.
 
With the notation in \cref{matching-lemma}, we see that
$$\A_{\ba, \bb}=\A^{v}_{\MS_{\ba,\bb}} \quad\text{with}\quad v=\pmec.$$ Then
\cref{matching-lemma} shows that $\A_{\ba, \bb}$ is an acyclic
matching. It is also homogeneous, since $\A$ is.
Moreover, observe that  every $\sigma \in \MS_{\ba,\bb}$ is matched as
$$\begin{cases}
  \sigma \to \sigma \ssm\{\pmec\} \in \A_{\ba, \bb} & \qif \pmec \in \sigma, \\
  \sigma \cup \{\pmec \} \to \sigma  \in \A_{\ba, \bb} & \qif \pmec \notin \sigma.
\end{cases}
$$
Indeed, to see that   $\sigma \cup \{\pmec \} \to \sigma  \in \A_{\ba, \bb}$, observe that $\iota(\sigma\cup \{\pmec\})=\bc$ by \cref{p:inS}, and hence $(\sigma\cup \{\pmec\})\ssm \{\pme^{\iota(\sigma\cup \{\pmec\})}\}=\sigma$. 

Therefore, the set of $\A_{\ba, \bb}$-critical cells of this matching is empty.

 \begin{Claim2}
  If $\sigma'\in \MS_{\ba, \bb}$ and $\sigma\in \MS_{\bc, \bd}$ with
  $\sigma' \subseteq \sigma$, then $(\ba, \bb)\le (\bc, \bd)$.
 \end{Claim2}
 
 Indeed, the hypotheses in the Claim~2 imply
 $$
 (\ba,\bb)=\nu(\sigma') \leq \nu(\sigma)=(\bc, \bd).
 $$
 Now we apply \cref{clusterlemma}, observing that 
 $$
 \A=\bigcup_{(\ba, \bb)\in \MS}\A_{\ba, \bb}\qquad\text{and} \qquad\Trq\ssm \Srq=\bigcup_{(\ba, \bb)\in \MS}\MS_{\ba,\bb}\,.
 $$ Claim 1 and Claim 2 show that $\A$ is a homogeneous acyclic
 matching of $G_{X'}$, where $X'=\Trq\ssm \Srq$, and its set of
 critical cells is empty.  Finally, from \cref{inclusions} it 
 follows that $\A$ is a homogeneous acyclic matching of $G$, with the
 set of $\A$-critical cells equal to $\Srq$.
 
The  conclusion that $\Srq$ supports a minimal free resolution of $\Erq$ follows then from \cref{r:BW}.
\end{proof}

A direct consequence of \cref{t:Morse-small-q} and \cref{c:U=S} is that the $\bf f$-vector of $\UUrq$ gives the betti numbers of $\Erq$ when $q\le 4$. 

\begin{corollary}
\label{c:f-betti}
If $1\le q\le 4$ and $r\ge 1$, then $\beta_i(\Erq)=f_i$ for all $i\ge 0$, where ${\bf f}(\UUrq)=(f_i)_{i\ge 0}$. 
\end{corollary}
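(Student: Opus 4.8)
The plan is to obtain this as a formal consequence of the two immediately preceding results, \cref{t:Morse-small-q} and \cref{c:U=S}. The conceptual point to isolate first is the following: if a simplicial complex $\Delta$ whose vertices are labelled by the minimal generators of a monomial ideal $J$ supports a \emph{minimal} free resolution of $J$, then the rank of the $i$-th free module in that resolution is exactly the number of $i$-dimensional faces of $\Delta$; since a minimal free resolution is unique up to isomorphism, this rank equals $\beta_i(J)$. This is just the homogenization/construction recalled in \cref{s:basic-definitions}, together with the convention (already in force for $\Trq$ and in \cref{t:f-vector}) that an $i$-dimensional face contributes in homological degree $i$.

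First I would apply \cref{t:Morse-small-q}: for $1\le q\le 4$ and $r\ge 1$, the Scarf complex $\Srq$ supports a minimal free resolution of $\Erq$. By the observation above, applied with $J=\Erq$ and $\Delta=\Srq$, this gives $\beta_i(\Erq)=f_i(\Srq)$ for every $i\ge 0$, where $f_i(\Srq)$ is the number of $i$-dimensional faces of $\Srq$.

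Next I would apply \cref{c:U=S}, which asserts $\Srq=\UUrq$ under the same hypotheses $1\le q\le 4$, $r\ge 1$. Since the two complexes have the same set of faces, their $\f$-vectors coincide, so $f_i(\Srq)=f_i(\UUrq)=f_i$ for all $i$. Combining this with the previous identity yields $\beta_i(\Erq)=f_i$ for all $i\ge 0$, with $\f(\UUrq)=(f_i)_{i\ge 0}$, which is the assertion.

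There is no genuine obstacle at this stage: all the substantive work has already been done, namely the construction in \cref{t:Morse-small-q} of a homogeneous acyclic Morse matching on $G_{\Trq}$ whose set of critical cells is precisely $\Srq$, and the combinatorial identification $\Srq=\UUrq$ coming from \cref{p:U=Scarf}. The only thing worth stating carefully in the write-up is the indexing convention relating simplicial dimension to homological degree, so that the claimed equality $\beta_i=f_i$ (rather than $\beta_i=f_{i-1}$ or similar) is unambiguous and consistent with the rest of the paper.
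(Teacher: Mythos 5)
Your argument is exactly the paper's: the corollary is stated there as an immediate consequence of \cref{t:Morse-small-q} (that $\Srq$ supports a minimal free resolution of $\Erq$, whence $\beta_i(\Erq)$ equals the number of $i$-faces of $\Srq$) together with \cref{c:U=S} (that $\Srq=\UUrq$). Your additional care about the dimension/homological-degree indexing is harmless but not needed beyond what the paper already assumes.
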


In \cref{s:betti}, we will record explicit formulas for these betti 
numbers. The example below is a preview.

  \begin{example} 
\label{e:q=3r=4}When $q=3$ and $r=4$, the picture in \cref{f:U-pic} is a geometric realization of the simplicial complex $\Srq=\UUrq$. By \cref{t:Morse-small-q}, we know that this complex supports a minimal free resolution of ${\E_3}^4$. In particular, we see that $\pd({\E_3}^4)=2$ and,  counting the number of vertices, edges and triangles, we get: 
$$
\beta_0({\E_3}^4)=15 \qquad \beta_1({\E_3}^4)=30\qquad \beta_2({\E_3}^4)=16\,.
$$
If desired, the multi-graded betti numbers can also be explicitly described, along with the differentials in the minimal free resolution supported on this complex.  
     \end{example}

\section{{\bf \large  The first betti numbers of $\Erq$ are Scarf for all $r$ and $q$}}
\label{s: first}
We will prove in \cref{t:first-betti} that there is a multigraded free
resolution of $\Erq$ with only Scarf multidegrees appearing in the
first homological degree. Since the Scarf multidegrees appear in
{\it every} free resolution, this will imply
that $$\beta_{1,\m}(\Erq) \neq 0 \iff \m \mbox{ is the monomial label
  of an edge of } \Srq.$$

To find such a complex, we describe  in \cref{t:first-betti} a matching that removes all the
non-Scarf edges in $\Trq$. We first need some
preliminary steps, using \cref{n:rqs}.

\begin{proposition} \label{p:edges}
Let $r,q\ge 1$ and $\ba, \bb, \bc\in \Nrq$.  If $\lcm(\pmea,
\pmeb)=\lcm(\pmea, \pmec)$, then $\pmeb=\pmec$.
 \end{proposition}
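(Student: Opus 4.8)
\section*{Proof proposal}

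\textbf{Strategy.} The plan is to reduce the statement to a coordinatewise claim about the exponent vectors $\ba,\bb,\bc$ and then run a short case analysis, using the divisibility criterion already recorded in \cref{l:when-divide}.

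\textbf{Step 1: extract two divisibilities.} Since $\pmeb$ divides $\lcm(\pmea,\pmeb)$ and, by hypothesis, $\lcm(\pmea,\pmeb)=\lcm(\pmea,\pmec)$, I get $\pmeb\mid\lcm(\pmea,\pmec)$. Symmetrically $\pmec\mid\lcm(\pmea,\pmeb)$. Applying \eqref{ineq} of \cref{l:when-divide} (the case $d=2$) to each of these, I obtain that for every $j\in[q]$
$$\min(a_j,c_j)\le b_j\le\max(a_j,c_j)\qquad\text{and}\qquad\min(a_j,b_j)\le c_j\le\max(a_j,b_j).$$
That is, in each coordinate $j$ the value $b_j$ lies weakly between $a_j$ and $c_j$, and simultaneously $c_j$ lies weakly between $a_j$ and $b_j$.

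\textbf{Step 2: betweenness forces equality.} I claim these two conditions force $b_j=c_j$ for every $j$. Fix $j$ and suppose $b_j\ne c_j$; since the pair of conditions above is invariant under exchanging $\bb$ and $\bc$, I may assume $b_j<c_j$. From $c_j\le\max(a_j,b_j)$ and $c_j>b_j$ it follows that $\max(a_j,b_j)=a_j$ and $a_j\ge c_j$; hence $\min(a_j,c_j)=c_j$, and the inequality $b_j\ge\min(a_j,c_j)$ yields $b_j\ge c_j$, contradicting $b_j<c_j$. Therefore $b_j=c_j$ for all $j$, i.e.\ $\bb=\bc$, and consequently $\pmeb=\pmec$ (indeed $\pmeb$ is determined by $\bb$; cf.\ \cref{p:vertices}).

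\textbf{On difficulty.} There is no real obstacle here: the only things to get right are recognizing that \eqref{ineq} supplies exactly the needed coordinatewise bounds once the lcm equality is split into the two divisibilities, and the bookkeeping in the final case distinction. An equivalent but slightly longer route would compare exponents of each variable $x_A$ directly via \eqref{lcm}, reducing to $\max(\sum_{i\in A}a_i,\sum_{i\in A}b_i)=\max(\sum_{i\in A}a_i,\sum_{i\in A}c_i)$ for all $\emptyset\ne A\subseteq[q]$ and then exploiting $|\ba|=|\bb|=|\bc|=r$ through complementary subsets; I would prefer the first route for brevity.
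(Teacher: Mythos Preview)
Your proof is correct and is essentially identical to the paper's own argument: both extract the two divisibilities $\pmeb\mid\lcm(\pmea,\pmec)$ and $\pmec\mid\lcm(\pmea,\pmeb)$, apply \eqref{ineq} from \cref{l:when-divide} to get the coordinatewise betweenness inequalities, and then derive a contradiction from assuming $b_j\ne c_j$ in some coordinate. The only cosmetic difference is that the paper assumes $b_i>c_i$ where you assume $b_j<c_j$.
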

 \begin{proof}
   Assume $\lcm(\pmea, \pmeb)=\lcm(\pmea, \pmec)$, and let $i\in [q]$
   be such that $b_i \neq c_i$.  In view of \cref{l:when-divide},
   $\pmec \mid \lcm(\pmea, \pmeb)$ gives
\begin{equation}
\label{e:1}
 \min(a_i,b_i)\le c_i\le \max(a_i,b_i),
\end{equation}
and $\pmeb\mid\lcm(\pmea, \pmec)$ gives
\begin{equation}
\label{e:2}
\min(a_i,c_i)\le b_i\le \max(a_i,c_i).
\end{equation}

Without loss of generality, assume $b_i \gneq c_i$. Then by \eqref{e:2}
$\max(a_i,c_i)=a_i$ and hence $b_i\le a_i$. Therefore
$\min(a_i,b_i)=b_i$ and \eqref{e:1} implies $b_i\le c_i$, a
contradiction. Hence $b_i=c_i$ for all $i \in [q]$, and therefore
$\pmeb=\pmec$.
 \end{proof}

 When  $\{\pmea, \pmeb\} \in \Trq\ssm \Srq$, \cref{l:expansion}
    guarantees the existence of an element $\bc_{\ba,\bb} \in \Nrq$
    such that 
      \begin{equation}\label{e:cab}
        \lcm(\pmea,\pmeb,\pme^{\bc_{\ba,\bb}})=\lcm(\pmea,\pmeb) \qand
        \bc_{\ba,\bb} \notin \{\ba, \bb\}
      \end{equation}
      We next show that there is a CW complex supporting a free
      resolution of $\Erq$, whose edges correspond to the edges of the
      $\Srq$. This is done by defining a matching on $G=G_{\Trq}$ (see
      \cref{s:Morse} for notation).  While we need to make a choice of
      $\bc_{\ba,\bb}$ in order to define a matching, the argument
      below holds for any set of choices that one makes.  For the
      theorem below, we choose a fixed element $\bc_{\ba,\bb}$
      satisfying \eqref{e:cab} for each $\ba,\bb \in \Nrq$.
      
\begin{theorem}{\bf (The 1st betti numbers of $\Erq$ are all Scarf.)}
\label{t:first-betti} 
Assume $r,q \geq 1$, and using notation as in \eqref{e:cab} and
\cref{s:Morse}, let $G=G_{\Trq}$ and let $\A$ denote the following
subset of $E(G)$
$$
\A=\left\{\{\pmea,\pmeb,\pme^{\bc_{\ba,\bb}}\}\to \{\pmea,\pmeb\} \in E(G)\st \{\pmea,\pmeb\}\notin \Srq  \right\}. $$
Then
\begin{enumerate}
\item The set $\A$ is a homogeneous acyclic matching of $G$; 

\item The set of $\A$-critical edges of $\Trq$ is precisely the set of
  edges of $\Srq$;

\item There exists a CW complex $\X_{\A}$ that supports a resolution of
  $\Erq$ whose edges are in one-to-one correspondence with the edges
  of $\Srq$;

\item The first betti number $\beta_1(\Erq)$ is equal to the
  number of edges of $\Srq$;

\item $\m$ is a multidegree with $\beta_{1,\m}(\Erq)\ne 0$ if and only
  if $\m=\lcm(\m',\m'')$, where $\m'$ and $\m''$ are monomial labels
  of an edge of $\Srq$.
\end{enumerate}
\end{theorem}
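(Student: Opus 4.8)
The plan is to establish the five assertions in order; essentially all the content sits in part~(1), and within that in the acyclicity of $\A$.

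First I would observe that each element of $\A$ is a genuine directed edge of $G=G_{\Trq}$, since $\{\pmea,\pmeb,\pme^{\bc_{\ba,\bb}}\}$ is a $2$-face and $\{\pmea,\pmeb\}=\{\pmea,\pmeb,\pme^{\bc_{\ba,\bb}}\}\ssm\{\pme^{\bc_{\ba,\bb}}\}$, and that $\A$ is homogeneous, which is precisely the content of \eqref{e:cab}. To see $\A$ is a matching, the only issue is that distinct non-Scarf edges $e_1\ne e_2$ give distinct $2$-faces $T(e_i):=e_i\cup\{\pme^{\bc(e_i)}\}$ (no two edges of $\A$ can share their lower endpoint, since the target $\{\pmea,\pmeb\}$ determines the fixed choice $\bc_{\ba,\bb}$). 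If $T(e_1)=T(e_2)=:T$, then $e_1,e_2$ are two distinct edges of the triangle $T$, hence meet in a single vertex $v$; writing $e_i=\{v,w_i\}$ and using $\lcm(e_1)=\m_T=\lcm(e_2)$ (again \eqref{e:cab}), \cref{p:edges} forces $w_1=w_2$, a contradiction.

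The acyclicity of $\A$ is the main obstacle. Since the only dimension-raising arrows of $G^\A$ are the reversed matched arrows, which go from a $1$-face to a $2$-face, a directed cycle in $G^\A$ can visit no face of dimension $\ge 3$ and cannot pass through a vertex, so it must have the alternating shape $e_1\to T(e_1)\to e_2\to T(e_2)\to\cdots\to e_k\to T(e_k)\to e_1$, where each $e_i$ is a non-Scarf edge, $e_{i+1}$ is an edge of $T(e_i)$ different from $e_i$ (hence $e_{i+1}\ni\pme^{\bc(e_i)}$ with its other vertex in $e_i$), and all faces listed are pairwise distinct. Because $e_{i+1}\subseteq T(e_i)$ and $\m_{T(e_i)}=\m_{e_i}$, the relations $\m_{e_{i+1}}\mid\m_{e_i}$ go around the cycle, so all $e_i$ and all $T(e_i)$ carry one common label $\m$. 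Let $u_i$ be the vertex of $e_i$ lying also in $e_{i+1}$, so $e_{i+1}=\{u_i,\pme^{\bc(e_i)}\}$; then the shared vertex $u_{i+1}$ of $e_{i+1}$ and $e_{i+2}$ is $u_i$ or $\pme^{\bc(e_i)}$, and if $u_{i+1}=u_i$ then $\lcm(u_i,\pme^{\bc(e_i)})=\m=\lcm(u_i,\pme^{\bc(e_{i+1})})$, so \cref{p:edges} gives $\pme^{\bc(e_{i+1})}=\pme^{\bc(e_i)}\in e_{i+1}$, contradicting \eqref{e:cab}. Hence $u_{i+1}=\pme^{\bc(e_i)}$, i.e.\ $e_i=\{u_{i-1},u_i\}$ for all $i$; then $\lcm(u_{i-1},u_i)=\m=\lcm(u_i,u_{i+1})$ and \cref{p:edges} yields $u_{i-1}=u_{i+1}$ for every $i$, which around the cycle forces the edges $e_i=\{u_{i-1},u_i\}$ either to degenerate or to repeat, contradicting distinctness of the faces in the cycle. (Alternatively one could partition $\Trq\ssm\Srq$ by monomial label and invoke \cref{clusterlemma}, but the direct argument is cleaner.) This completes~(1).

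The remaining parts are formal. For~(2): the endpoints of arrows of $\A$ are exactly the non-Scarf edges together with the $2$-faces $T(e)$, so the $\A$-critical $1$-faces of $\Trq$ are precisely the edges of $\Srq$. For~(3): \cref{t:BW} applied to the homogeneous acyclic matching $\A$ produces a CW complex $\X_{\A}$ supporting a free resolution of $\Erq$ with label-preserving cells indexed by the $\A$-critical faces, whose $1$-cells, by~(2), correspond to the edges of $\Srq$. For~(4): the minimal free resolution of $\Erq$ is a direct summand of the one supported on $\X_{\A}$, so $\beta_1(\Erq)$ is at most the number of $1$-cells of $\X_{\A}$, i.e.\ the number of edges of $\Srq$; conversely distinct faces of $\Srq$ carry distinct labels and each $1$-dimensional Scarf face $\sigma$ forces $\beta_{1,\m_\sigma}(\Erq)\ge 1$, giving the reverse inequality and hence equality. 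For~(5): $\beta_{1,\m}(\Erq)\ne 0$ forces, via the $\X_{\A}$-resolution, that $\m=\m_\sigma$ for some edge $\sigma$ of $\Srq$, that is $\m=\lcm(\m',\m'')$ with $\m',\m''$ the labels of the endpoints of $\sigma$; the converse is the Scarf lower bound already used in~(4). Note this pins down the value of $\beta_1(\Erq)$ and all multidegrees supporting it without deciding whether $\Erq$ is a Scarf ideal.
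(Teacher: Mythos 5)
Your proof is correct and follows essentially the same route as the paper: the matching and homogeneity arguments are identical, and the acyclicity argument hinges on \cref{p:edges} exactly as in the paper. The one difference is cosmetic: where the paper invokes \cite[Lemma~3.3]{FFGY} to get the alternating cycle shape with a single common label, you derive this directly from the fact that the matched pairs sit only in dimensions $1$ and $2$; also, your final contradiction via the sequence $u_i$ is a bit more roundabout than necessary, since once two consecutive edges $e_i$ and $e_{i+1}$ of the cycle are known to be distinct, to share a vertex, and to carry the same label, \cref{p:edges} forces $e_i=e_{i+1}$ immediately (which is the paper's one-step conclusion).
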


\begin{proof}
 (1) We first show that $\A$ is a matching. Given the fixed choice of
  $\bc_{\ba,\bb}$, the vertices in $\A$ of the form $\{\pmea,\pmeb\}$
  each belong to only one edge of $\A$. Assume now that a
  $2$-dimensional face $\sigma =\{\pmea, \pmeb, \pmed\}$ of $\Trq$ is a vertex
  of two edges of $\A$, say $$\sigma\to \{\pmea, \pmeb\} \qand
  \sigma\to \{\pmea, \pmed\}$$ which implies, by \eqref{e:cab},
  that $$\lcm(\pmea, \pmeb)=\lcm(\sigma)=\lcm(\pmea, \pmed).$$ Then by
  \cref{p:edges} $\pmeb=\pmed$, a contradiction.

  By \eqref{e:cab} the matching $\A$ is homogeneous.

  Finally, assume that there is a cycle $\C$ in $G$, by
  \cite[Lemma~3.3]{FFGY}, $\C$ (shown below) will have at least 6
  edges alternating between those in $\A$ and those outside $\A$, and
  all having the same monomial label.  
 
\begin{tikzpicture}[label distance=-5pt] 
\coordinate (A) at (-6, 0);
\coordinate (B) at (-6, 1);\\
\coordinate (C) at (-3, 0);
\coordinate (D) at (-3,  1);
\coordinate (G) at (0,0);
\coordinate (H) at (0, 1);
\coordinate (I) at (1, .5);
\coordinate (J) at (3, .5);
\coordinate (L1) at (1.85, .5);
\coordinate (L2) at (2, .5);
\coordinate (L3) at (2.15, .5);
\coordinate (M) at (4, 0);
\coordinate (N) at (4,  1);
\coordinate (Z) at (.5, -.5);
 \draw[black, fill=black] (A) circle(0.04);
\draw[black, fill=black] (B) circle(0.04);
\draw[black, fill=black] (C) circle(0.04);
\draw[black, fill=black] (D) circle(0.04);
 \draw[black, fill=black] (G) circle(0.04);
 \draw[black, fill=black] (H) circle(0.04);
 \draw[black, fill=black] (M) circle(0.04);
 \draw[black, fill=black] (N) circle(0.04);
 \draw[black, fill=black] (L1) circle(0.015);
 \draw[black, fill=black] (L2) circle(0.015);
 \draw[black, fill=black] (L3) circle(0.015);
\draw[-latex] (A) -- (B);
\draw[-latex] (B) -- (C);
\draw[-latex] (C) -- (D);
\draw[-latex] (D) -- (G);
\draw[-latex] (G) -- (H);
\draw[-] (H) -- (I);
\draw[-] (J) -- (M);
\draw[-latex] (M) -- (N);
\draw[-latex] (N) -- (A);
\node[label = below :$\tiny{\{\pme^{\ba_1} , \pme^{\ba_2} \}}$] at (A) {};
\node[label = above : $\tiny{\{\pme^{\ba_1} , \pme^{\ba_2}, \pme^{\ba_3} \}}$] at (B) {};
\node[label =  below :$\tiny{\{\pme^{\bb} , \pme^{\ba_3} \}}$] at (C) {};
\end{tikzpicture}

  Therefore, in the picture above we will have $\bb \in
  \{\ba_{1}, \ba_{2}\}$, and $\{\pme^{\ba_1} ,
  \pme^{\ba_2}\}$ and $\{\pme^{\bb} , \pme^{\ba_3}\}$ are distinct
  edges of $\Trq$ with the same monomial label $$\lcm(\pme^{\ba_1} ,
  \pme^{\ba_2})= \lcm(\pme^{\bb} , \pme^{\ba_3}),$$ which
  contradicts \cref{p:edges}.

(2) is clear from the definition of $\A$.

(3) follows directly from \cref{t:BW}.

(4)  The fact that $\beta_1({\E_q}^r)$ is at most the number of
  edges of $\Srq$ follows from (3). The reverse inequality is true in
  general, as every free resolution contains the Scarf faces.

(5)  is a reformulation of (4).
\end{proof}

\section{{\bf \large  Bounds on betti numbers of powers of monomial ideals}\label{s:betti}}

In this section we discuss bounds on betti numbers that can be deduced
from our results throughout the paper. In view of \cref{t:upperbound},
knowledge of the betti numbers of $\Erq$ provides an upper bound for
the betti numbers of $I^r$ for any ideal $I$ generated by $q$
square-free monomials.

\cref{t:first-betti} shows that the first betti number of $\Erq$ equals the number of Scarf edges of $\Erq$. Thus, to explicitly find this betti number, one would like to have a combinatorial count of the Scarf edges. It turns out that a full characterization of these edges for
large values of $r$ and large $q$ is quite difficult, but small values
of these indices are manageable. We provide below a count when $r=3$.

\begin{lemma}
\label{l:types}
Let  $q\ge 2$ and $\pmea\ne \pmeb$ in ${\E_q}^3$. Then $\{\pmea, \pmeb\}\in \SS^3_q$ if and only if one of the following holds: 
\begin{enumerate}
\item  $\{\pmea,\pmeb\}=\e_u\e_v\{\e_i, \e_j\}$ with $u,v,i,j\in [q]$, $i\ne j$. 
\item $\{\pmea,\pmeb\}=\e_u\{\e_i\e_j, \e_k\e_l\}$ with $u,i,j,k,l\in [q]$, $i,j,k,l$ distinct.
\item $\{\pmea,\pmeb\}=\{\e_i\e_j\e_k, \e_u\e_v\e_w\}$ with $i,j,k,u,v,w\in [q]$ distinct. 
\item $\{\pmea,\pmeb\}=\{\e_i\e_j\e_k, \e_u^2e_v\}$ with $i,j,k,u,v\in [q]$ distinct. 
\end{enumerate}
\end{lemma}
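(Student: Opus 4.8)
The plan is to organise the whole argument around $\pmed:=\egcd(\pmea,\pmeb)$. Assume without loss of generality that $\pmea>\pmeb$, write $\pmea=\pmed\pmeaa$ and $\pmeb=\pmed\pmebb$ with $\egcd(\pmeaa,\pmebb)=1$, and put $d=|\bd|$; since $\pmea\ne\pmeb$ we have $0\le d\le 2$ and $\pmeaa,\pmebb$ have degree $3-d$. By \cref{p:Scarf-face}, when $d\in\{1,2\}$ the edge $\{\pmea,\pmeb\}$ lies in $\SS^3_q$ if and only if $\{\pmeaa,\pmebb\}$ lies in $\SS^{3-d}_q$, so the statement reduces to a case analysis on $d$.

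For the ``if'' direction: if $d=2$, then $\pmeaa=\e_i$, $\pmebb=\e_j$ with $i\ne j$, and $\{\e_i,\e_j\}$ is a face of $\SS^1_q=\Scarf(\E_q)=\Taylor(\E_q)$, so \cref{p:Scarf-face} gives item~(1). If $d=1$, then for distinct $i,j,k,l$ the pair $\{\e_i\e_j,\e_k\e_l\}$ is a subset of the facet $\U^2_q$ of $\UU^2_q=\SS^2_q$ (\cref{c:r=2-Scarf}), hence a face of $\SS^2_q$, and \cref{p:Scarf-face} gives item~(2). Item~(3) holds because $\{\e_i\e_j\e_k,\e_l\e_m\e_n\}$ is a subset of $\U^3_q\in\UU^3_q\subseteq\SS^3_q$ by \cref{t:f-vector}. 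For item~(4), when $i,j,k,u,v$ are distinct we have $\egcd(\e_i\e_j\e_k,\e_u^2\e_v)=1$; a permutation $\pi\in S_q$ acts as a ring automorphism of $\SE$ sending $\e_t\mapsto\e_{\pi(t)}$, hence maps $\SS^3_q$ onto itself, and it can be chosen to carry $\{\e_i\e_j\e_k,\e_u^2\e_v\}$ to $\{\e_1^2\e_2,\e_3\e_4\e_5\}$, which lies in $\SS^3_q$ by \cref{r:q>4}.

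For the ``only if'' direction, assume $\{\pmea,\pmeb\}\in\SS^3_q$. If $d=2$, the shape in item~(1) is forced as above. If $d=1$, then $\{\pmeaa,\pmebb\}$ is a coprime edge of $\SS^2_q=\UU^2_q$; I would first exclude $\pmeaa=\e_t^2$, since the only facet of $\UU^2_q$ containing $\e_t^2$ is $\e_t\,\U^1_q$, which cannot contain a monomial coprime to $\e_t$, and this leaves $\pmeaa=\e_i\e_j$, $\pmebb=\e_k\e_l$ with $i,j,k,l$ distinct, i.e. item~(2). The remaining case is $d=0$, where $\ba$ and $\bb$ have disjoint supports; here I would split on $t:=|\Supp(\ba)|+|\Supp(\bb)|$. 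If $t\le 4$, then $\ba$ and $\bb$ cannot both be $\{0,1\}$-vectors (that would force $t=6$), so some coordinate has $|a_s-b_s|\ge 2$, whence \cref{l:find-c} gives $\{\pmea,\pmeb\}\notin\SS^3_q$, a contradiction. If $t=5$, then disjointness of supports and total degree $3$ force $\{\pmea,\pmeb\}=\{\e_i\e_j\e_k,\e_u^2\e_v\}$ with all five indices distinct, i.e. item~(4); if $t=6$, they force $\{\pmea,\pmeb\}=\{\e_i\e_j\e_k,\e_l\e_m\e_n\}$ with all six indices distinct, i.e. item~(3).

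The one genuinely delicate point, I expect, is item~(4) in the ``if'' direction: unlike items (1)--(3), the edge $\{\e_i\e_j\e_k,\e_u^2\e_v\}$ is not contained in any set $\Ua$, so \cref{t:f-vector} does not apply, and one must either quote the explicit divisibility computation in \cref{r:q>4} (together with the $S_q$-symmetry of $\SS^3_q$), or redo that short computation via \cref{l:when-divide} to check that no $\pme^{\bc}\notin\{\pmea,\pmeb\}$ divides $\lcm(\pmea,\pmeb)$. Everything else amounts to bookkeeping: translating between $\pme^{\ba}$ and the support/exponent pattern of $\ba$, and checking that the hypotheses of \cref{p:Scarf-face} and \cref{l:find-c} are met in each subcase.
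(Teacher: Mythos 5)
Your proof is correct and follows essentially the same route as the paper's: reduce via $\pmed=\egcd(\pmea,\pmeb)$ and \cref{p:Scarf-face}, handle $|\bd|\in\{1,2\}$ via $\SS^{3-|\bd|}_q=\UU^{3-|\bd|}_q$, and in the coprime case invoke \cref{t:f-vector} for (3), \cref{r:q>4} for (4), and \cref{l:find-c} for the excluded shapes. You make explicit two steps the paper leaves implicit -- the $S_q$-symmetry needed to promote the specific computation in \cref{r:q>4} to general indices, and the exclusion of $\pmeaa=\e_t^2$ when $|\bd|=1$ -- which is a small but genuine gain in rigor, not a different argument.
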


\begin{proof}
Set  $\pmed=\egcd(\pmea, \pmeb)$ and write $\pmea=\pmed\cdot\pmeaa$ and $\pmeb=\pmed\cdot\pmebb$. By \cref{p:Scarf-face},  we have 
$$
\{\pmea, \pmeb\}\in \SS^3_q \iff \{\pmeaa, \pmebb\}\in \SS^{3-|\bd|}_{q}\,.
$$

If $|\bd|>0$, then $\SS^{3-|\bd|}_{q}=\UU^{3-|\bd|}_{q}$, in view of  \cref{c:r=2-Scarf}. 
Thus, if  $|\bd|=1$, then $\{\pmea, \pmeb\}\in \SS^3_q$ if and only (2) holds  and if $|\bd|=2$, then $\{\pmea, \pmeb\}\in \SS^3_q$ if and only (1) holds. 

Assume now $\bd=0$. It remains to show that  $\{\pmea, \pmeb\}\in \SS^3_q$ if and only if (3) or (4) hold. 

If (3) holds, then $\{\pmea, \pmeb\}\in \UU^3_q$, and $\UU^3_q\subseteq \SS^3_q$ by \cref{t:f-vector}. 

If (4) holds, then $\{\pmea, \pmeb\}\in \SS^3_q$ by \cref{r:q>4}. 

Assume now neither (3), nor (4) holds. Since $\bd=0$, we must have  either $\e_i^3\in\{\pmea, \pmeb\}$ for some $i\in [q]$ or else  $\{\pmea, \pmeb\}=\{\e_i^2\e_j, e_u^2\e_v\}$ with $i,j,u,v\in[q]$ distinct. In both cases, we observe that $|\Supp(\ba)\cup \Supp(\bb)|\le 4$, and we conclude  $\{\pmea, \pmeb\}\notin \SS^3_q$ by \cref{l:find-c}. 
\end{proof}

For the next results, we recall the convention that $\binom{u}{v}=0$ when $u<v$.

\begin{theorem}
\label{t:betti-1}
Let $q\ge 1$ and let $I$ be a monomial ideal minimally generated by
$q$ square-free monomials. Then
\begin{equation}
\label{e:betti1}
\beta_1(I^3)\le \binom{q+1}{2}\binom{q}{2}+3q\binom{q}{4}+20\binom{q}{5}+10\binom{q}{6}\,.
\end{equation}
Moreover, equality holds when $S=\SE$ and $I={\E_q}$.

\end{theorem}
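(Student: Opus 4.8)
The plan is to reduce the statement to a count of the edges of $\SS^3_q$. By \cref{t:upperbound}(4) with $r=3$ we have $\beta_1(I^3)\le\beta_1({\E_q}^3)$, so the inequality will follow once $\beta_1({\E_q}^3)$ is shown to equal the right-hand side of \eqref{e:betti1}; taking $S=\SE$ and $I=\E_q$ then gives the asserted equality. By \cref{t:first-betti}(4), $\beta_1({\E_q}^3)$ equals the number of $1$-dimensional faces of $\SS^3_q$. If $q=1$ then ${\E_1}^3=({\e_1}^3)$ is principal, so $\beta_1=0$, and the right-hand side is also $0$ under the convention $\binom{u}{v}=0$ for $u<v$; so assume $q\ge 2$.

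Next I would note that \cref{l:types} exhibits the edge set of $\SS^3_q$ as a \emph{disjoint} union of the four families (1)--(4). Disjointness is built into the proof of \cref{l:types}: writing $\pmed=\egcd(\pmea,\pmeb)$, an edge $\{\pmea,\pmeb\}\in\SS^3_q$ lies in family (1) when $|\bd|=2$, in family (2) when $|\bd|=1$, and in family (3) or (4) when $|\bd|=0$ (and $\pmea\ne\pmeb$ forces $|\bd|\le 2$); moreover families (3) and (4) are told apart by whether or not both endpoints are square-free monomials in the $\e_i$. Hence $\beta_1({\E_q}^3)$ is the sum of the cardinalities of the four families, and it remains to count each one.

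To count each family I would exhibit a bijection between its edges and an explicit set of combinatorial data, recovered from $\{\pmea,\pmeb\}$ by forming $\pmed=\egcd(\pmea,\pmeb)$ and the two quotients $\pmea/\pmed$, $\pmeb/\pmed$. For family (1) the data is the exponent vector of $\pmed$ (a size-$2$ multiset from $[q]$, so $\binom{q+1}{2}$ options) together with the $2$-subset $\{i,j\}$ of $[q]$ with $\pmea/\pmed=\e_i$ and $\pmeb/\pmed=\e_j$ ($\binom{q}{2}$ options): this gives $\binom{q+1}{2}\binom{q}{2}$ edges. For family (2) the data is the index $u$ with $\pmed=\e_u$, a $4$-subset $\{i,j,k,l\}$ of $[q]$, and one of the three ways of splitting it into two pairs, with $\{\pmea/\pmed,\pmeb/\pmed\}=\{\e_i\e_j,\e_k\e_l\}$ (the four indices distinct because the two quotients have disjoint support): this gives $3q\binom{q}{4}$ edges. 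For family (3) the data is a $6$-subset of $[q]$ together with one of the $\tfrac{1}{2}\binom{6}{3}=10$ unordered splittings into two triples: $10\binom{q}{6}$ edges. For family (4) the data is a $5$-subset of $[q]$, a $3$-subset of it forming the support of the square-free endpoint ($\binom{5}{3}=10$ options), and a choice of which of the remaining two elements is squared ($2$ options): $20\binom{q}{5}$ edges. Summing the four numbers yields exactly the right-hand side of \eqref{e:betti1}.

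The part that needs care is the bookkeeping in the last paragraph: checking that the four families are genuinely disjoint and exhaustive (handled through $\pmed$ and the shape of the endpoints, as above) and that each parametrization is a true bijection. The only mildly delicate point is that in family (2) the index $u$ may coincide with one of $i,j,k,l$ (and likewise $u,v$ with $i,j$ in family (1)), so these families are \emph{not} parametrized by disjoint index sets; this causes no double counting, however, because the edge pins down $\pmed$, and hence $u$ (resp.\ $\{u,v\}$), unambiguously. That every listed choice of data does produce an edge lying in $\SS^3_q$ is the ``if'' direction of \cref{l:types}, which in turn rests on \cref{c:r=2-Scarf}, \cref{t:f-vector} and \cref{r:q>4}.
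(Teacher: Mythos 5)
Your proposal is correct and follows essentially the same route as the paper: reduce via \cref{t:upperbound}(4) to $\beta_1({\E_q}^3)$, identify that with the number of edges of $\SS^3_q$ by \cref{t:first-betti}, and count those edges family-by-family using \cref{l:types}, obtaining exactly $\binom{q+1}{2}\binom{q}{2}$, $3q\binom{q}{4}$, $10\binom{q}{6}$, and $20\binom{q}{5}$. The extra care you took to justify disjointness of the four families (via $\pmed=\egcd(\pmea,\pmeb)$) and to spell out the bijective parametrizations is sound and merely fills in bookkeeping the paper leaves implicit.
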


\begin{proof}
We first show $\beta_1({\E_q}^3)$ is equal to the right-hand side of \eqref{e:betti1}. 

In view of \cref{t:first-betti}, $\beta_1({\E_q}^3)$ is equal to the number of edges in $\SS_q^3$. We can use thus \cref{l:types} to count the number of edges in each of the cases (1)-(4). 

The number of edges that satisfy (1) in \cref{l:types} is $\binom{q+1}{2}\binom{q}{2}$. 

The number of edges that satisfy (2) in \cref{l:types} is $3q\binom{q}{4}$.

The number of edges that satisfy (3) in \cref{l:types} is $10\binom{q}{6}$.

The number of edges that satisfy (4) in \cref{l:types} is $20\binom{q}{5}$.

Putting these numbers together, we obtain that $\beta_1({\E_q}^3)$ is equal to the right-hand side of \eqref{e:betti1}. Finally, to justify the inequality in \eqref{e:betti1}, apply \cref{t:upperbound}. 
 \end{proof}

\begin{example}
Using \cref{t:betti-1}, we see that 
$$
\beta_1({\E_4}^3)=72\qquad  \beta_1({\E_5}^3)=245 \qquad \beta_1({\E_6}^3)=715 \qquad \beta_1({\E_7}^3)=1813 \qquad \beta_1({\E_8}^3)=4088\,.
$$
(This computation can also be verified using Macaulay2. )
These numbers are thus effective bounds for $\beta_1(I^3)$ for any ideal $I$ generated by $q$ square-free monomials when $q=4,5,6,7$, respectively $8$. 
\end{example}

 \begin{theorem}[{\bf Effective bounds on betti numbers}]\label{upper bounds}
 Let $q, r\ge 1$ and let $I$ be an ideal of $S$ minimally generated by
 $q$ square-free monomials, and set $$\gamma_i=\binom{r-i+q-1}{q-1}.$$
\begin{enumerate}
\item If $q=2$, then $\pd_S(I^r)\le 1$ and 
$$
\beta_0(I^r)\le r, \qquad \beta_1(I^r)\le r-1\,.
$$
\item If $q=3$, then $\pd_S(I^r)\le 2$ and  
$$\begin{array}{lll}
  \beta_0(I^r)\le \gamma_0, &
  \beta_1^S(I^r)\le 3\gamma_1, &
  \beta_2^S(I^r)\le \gamma_1+\gamma_2. 
\end{array}$$

\item  If $q=4$, then $\pd_S(I^r) \le 5$ and 
$$\begin{array}{lll}
 \beta_0(I^r)   \le \gamma_0,        &
 \beta_1^S(I^r) \le 6 \gamma_1+3\gamma_2,  &
 \beta _2^S(I^r) \le  4\gamma_1 + 16 \gamma_2,\\
  &&\\
 \beta_3^S(I^r)\le  \gamma_1+15\gamma_2+\gamma_3,  &
 \beta_4^S(I^r)\le 6\gamma_2, &
 \beta_5(I^r)\le \gamma_2. 
\end{array}$$
\end{enumerate}
Furthermore, all inequalities above become equalities when $S=S_{\E}$ and $I=\E_q$.
 \end{theorem}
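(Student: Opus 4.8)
The plan is to reduce the whole statement to a face count for the complex $\UUrq$. By \cref{t:upperbound} we have $\beta_i(I^r)\le \beta_i(\Erq)$ for all $i\ge 0$, and when $S=\SE$ and $I=\E_q$ these are equalities since then $I^r=\Erq$; so it suffices to show that, for $q\in\{2,3,4\}$, the quantities on the right-hand sides (and the asserted projective dimensions) are exactly the betti numbers of $\Erq$. By \cref{c:f-betti}, $\beta_i(\Erq)=f_i$ where $\f(\UUrq)=(f_i)_{i\ge0}$, and $\pd_S(\Erq)=\dim\UUrq$. Thus everything becomes the combinatorial computation of the $f$-vector of $\UUrq$ for $q\le 4$.

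To do this I would use the description of the faces of $\UUrq$ given by \cref{p:U=Scarf}: a set $\sigma=\{\pme^{\ba_1},\dots,\pme^{\ba_m}\}\subseteq\Trq$ is a face of $\UUrq$ precisely when $\sigma\subseteq\Uc$ with $\pmec=\egcd(\sigma)$. Writing $\ba_j=\bc+\mathbf v_j$ with each $\mathbf v_j$ a $0$--$1$ vector of weight $k:=r-|\bc|$, and identifying $\mathbf v_j$ with its support $V_j\in\binom{[q]}{k}$, the requirement $\egcd(\sigma)=\bc$ becomes exactly $\bigcap_j V_j=\emptyset$. This sets up a bijection between the $(m-1)$-dimensional faces of $\UUrq$ and the pairs $(\bc,\{V_1,\dots,V_m\})$ with $\bc\in\NN^q$, $|\bc|=r-k$ for some $0\le k\le q$, and $\{V_1,\dots,V_m\}$ an $m$-element family of $k$-subsets of $[q]$ with empty total intersection. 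Since there are $\gamma_k$ admissible vectors $\bc$ with $|\bc|=r-k$, we obtain
\begin{equation*}
  f_{m-1}=\sum_{k=0}^{q}\gamma_k\,g(q,k,m),\qquad
  g(q,k,m):=\#\Bigl\{\F\subseteq\tbinom{[q]}{k}\st |\F|=m,\ \bigcap_{V\in\F}V=\emptyset\Bigr\},
\end{equation*}
where $g(q,0,1)=1$, $g(q,0,m)=0$ for $m\ge2$, and $g(q,q,m)=0$; in particular $f_0=\gamma_0$, which already recovers $\beta_0(I^r)\le\gamma_0$.

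It then remains to evaluate $g(q,k,m)$ for $q\le4$, which I would do by inclusion--exclusion over the elements of $[q]$ forced into every member of $\F$, giving $g(q,k,m)=\sum_{j\ge0}(-1)^j\binom{q}{j}\binom{\binom{q-j}{k-j}}{m}$, and then substitute into the displayed formula and simplify. For $q=2$ only $k\in\{0,1\}$ contribute and $\UUrq$ is a path, so $\dim\UUrq\le1$ and $\pd_S(\Erq)\le1$; for $q=3$ the values $k\in\{0,1,2\}$ contribute with $\binom31=\binom32=3$, so $\dim\UUrq\le2$, $f_1=3\gamma_1$ and $f_2=\gamma_1+\gamma_2$; for $q=4$ the values $k\in\{0,1,2,3\}$ contribute, the $k=2$ facets being simplices on $\binom42=6$ vertices, so $\dim\UUrq\le5$, $\pd_S(\Erq)\le5$, and evaluating $g(4,k,m)$ for $1\le m\le6$ produces the six formulas for $\beta_0,\dots,\beta_5$. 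The bulk of the work---and the only delicate point---is this bookkeeping for $q=4$: one must check that the inclusion--exclusion counts combine with $\gamma_0,\dots,\gamma_3$ into exactly the stated linear combinations (for instance that the edges coming from the $k=1$ and $k=2$ facets together number $6\gamma_1+3\gamma_2$, with no contribution from $k=3$), the bijection above being what rules out any double counting of faces lying in several facets.
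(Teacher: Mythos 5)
Your proposal is correct and follows the same route as the paper: both reduce the theorem to the $f$-vector of $\UUrq$ via \cref{c:f-betti} and \cref{t:upperbound}, then count faces by their $\egcd$ (equivalently, by which $\ba\in\N_q^{r-k}$ gives $\sigma\subseteq\Ua$). The paper carries out this count by listing the face types for each $q$ and each dimension directly, while you systematize the same count with the bijection to pairs $(\bc,\F)$ and the inclusion--exclusion formula for $g(q,k,m)$; the resulting numbers agree.
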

 
 \begin{proof}
In view of \cref{c:f-betti},  the $\f$-vector of the complex $\UUrq$ gives the betti numbers $\beta_i(\Erq)$ when $q\le 4$. In turn, \cref{t:upperbound} gives the desired inequalities. 

We proceed now to compute $\f(\UUrq)$. In what follows, the convention is that $\N_q^i=\emptyset$ when $i<0$. Observe that $\gamma_i=|\N_q^{r-i}|$. Clearly, when $i=0$, $\gamma_0$ is exactly the number of vertices of $\UUrq$, which is $|\Nrq|$. 
 
 Assume $q=2$. The edges of $\UU^r_q$ are of the form
 $$
 \pmea \{\e_1, \e_2\},  \qquad \ba\in \N^{r-1}_2\,.
 $$
 Thus the number of edges is $|\N^{r-1}_2|=\gamma_1$. There are no higher dimensional faces in $\UU_2^r$, and this finishes the explanation for (1). 
 
 Assume  $q=3$.  The facets are 
 $$ \pmea\{\e_j\st j\in [3]\}, \quad \ba\in \N_q^{r-1}\qand \pmea\{\e_j\e_k\st j\ne k\}, \quad \ba\in \N_q^{r-2}\,.$$
 There are $\gamma_1$ facets of the first type and $\gamma_2$ facets of the second type, hence $\gamma_1+\gamma_2$ choices in total for the $2$-dimensional faces. 
 
 To count the edges, note that they take the form below:  
 $$
 \pmea\{\e_i,\e_j \}, \quad \ba\in \N_q^{r-1},\quad  i\ne j\,.
$$
Since $|\N_q^{r-1}|=\gamma_1$ and there are $3$ choices for $i,j$ with $i\ne j$, there are $3\gamma_1$ choices in total. This finishes the explanation for (2). 

Assume $q=4$. The edges are of two types: 
 \begin{align*}
& \pmea\{\e_i,  \e_j \}, \quad \ba\in \N_q^{r-1} , \quad i\ne j\\
& \pmea\{\e_i\e_j,  \e_ke_l \}, \quad \ba\in \N_q^{r-2}  \quad \{i,j,k,l\}=[4].
\end{align*}
There are $6\gamma_1$ edges of the first type and $3\gamma_2$ edges of the second type. 

The 2-dimensional faces are of the form: 
 \begin{align*}
& \pmea\{\e_i, \e_j , \e_k \},  \quad \ba\in \N_q^{r-1}, \quad |\{i,j,k\}|=3\\
& \pmea \{\e_i\e_j, \e_ke_l ,  \e_ue_v\}, \quad \ba\in \N_q^{r-2},\quad  i\ne j, \quad  k\ne l, \quad u\ne v, \quad \{i,j\}\cap \{k,l\}\cap \{u,v\}=\emptyset.
\end{align*}
There are $4\gamma_1$ faces of the first type and $(\binom{6}{3}-4)\gamma_2=16\gamma_2$ faces of the second type. 

The $3$-dimensional faces are of the form: 
 \begin{align*}
& \pmea\{\e_1,  \e_2 ,   \e_3,    \e_4 \},\quad  \ba\in \N_q^{r-1} \\
& \pmea\{\e_i\e_j,  \e_ke_l ,   \e_ue_v,    \e_pe_q\},\,  \ba\in \N_q^{r-2} , \,  i\ne j, \, k\ne l, \, u\ne v, \, p\ne q, \,  \{i,j\}\!\cap \!\{k,l\} \!\cap \!\{u,v\} \!\cap \!\{p,q\}=\emptyset \\
& \pmea\{\e_1\e_2\e_3,  \e_1\e_2\e_4 ,   \e_2\e_3e_4\}, \quad \ba\in \N_q^{r-3} .
\end{align*}
There are $\gamma_1$ faces  of the first type, $\gamma_3$ of the third type and $\binom{6}{4}\gamma_2=15\gamma_2$ faces of the second type. 
 
The $4$-dimensional faces are of the form: 
 \begin{align*}
 \pmea\{\e_i\e_j,  \e_k\e_l ,   \e_u\e_v,    \e_p\e_q,  \e_b\e_c\}, \, \ba\in \N_q^{r-2} ,&\, i\ne j, \, k\ne l, \, u\ne v, \, p\ne q, \, b\ne c\,,\\
& \, \{i,j\}\cap \{k,l\}\cap \{u,v\}\cap \{p,q\}\cap \{b,c\}=\emptyset\,.
\end{align*}
There are $6\gamma_2$ faces of this form. 

Finally, the $5$-dimensional faces are of the type
$$\pmea\{\e_1\e_2,  \e_1\e_3 ,   \e_1\e_4,    \e_2e_3,  \e_2\e_4 ,   \e_3\e_4 \},  \quad \ba\in \N_q^{r-2} $$
hence there are $\gamma_2$ of them. 
 \end{proof}

For the sake of completeness, we also include below the bounds obtained in \cite{L2} in the case when $r=2$. The fact that the bounds are achieved when $I=\E_q$ is established in \cite{Lr}. 

\begin{theorem}[{\cite[Theorem 4.1]{L2},\cite[Proposition 7.9]{Lr}}]
\label{t:L2r}
Let $q\ge 1$ and let $I$ be a monomial ideal minimally generated by
$q$ square-free monomials. Then
$$
\beta_i(I^2)\le{{\frac{1}{2}(q^2-q)}\choose{i+1}}+q {{q-1}\choose{i}}\,.
$$
Furthermore, equality holds when $S=S_{\E}$ and $I=\E_q$.
\end{theorem}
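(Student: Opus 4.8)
The plan is to deduce this from \cref{t:upperbound}(4) together with a computation of the betti numbers of $\E_q^2$, so that the core task is to identify $\beta_i(\E_q^2)$ with $\binom{\frac12(q^2-q)}{i+1}+q\binom{q-1}{i}$. By \cref{L2-supports} (equivalently \cref{U=L}), the complex $\UU^2_q=\LL^2_q$ supports a \emph{minimal} free resolution of $\E_q^2$, so $\beta_i(\E_q^2)=f_i(\UU^2_q)$, the number of $i$-dimensional faces of $\UU^2_q$. Thus it suffices to count the $i$-faces of $\UU^2_q$, and then invoke $\beta_i(I^2)\le\beta_i(\E_q^2)$ from \cref{t:upperbound}(4), with equality when $S=\SE$ and $I=\E_q$ being automatic since that is exactly the case just computed.

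First I would recall from \eqref{e:U2} that, for $q\ge 3$, the facets of $\UU^2_q$ are the square-free face $\U^2_q=\{\e_j\e_k\st 1\le j<k\le q\}$ together with, for each $i\in[q]$, the simplex $\e_i\U^1_q=\{\e_i\e_\ell\st \ell\in[q]\}$. The key structural observation is that the vertex sets of these two kinds of facets overlap only in vertices of $\U^2_q$: a face of $\UU^2_q$ either lies entirely inside $\U^2_q$, or it contains a vertex $\e_i^2$ (a ``squared'' vertex $\e_i\e_i$), and in the latter case it is forced to lie inside the unique facet $\e_i\U^1_q$ — indeed no two distinct squared vertices $\e_i^2,\e_j^2$ lie in a common facet. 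This gives a clean partition of the faces of $\UU^2_q$ and lets me count by inclusion–exclusion without overcounting.

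Carrying out the count: the number of $i$-faces contained in $\U^2_q$ is $\binom{|\U^2_q|}{i+1}=\binom{\binom q2}{i+1}=\binom{\frac12(q^2-q)}{i+1}$, since $\U^2_q$ is a simplex on $\binom q2$ vertices. For the remaining $i$-faces — those containing at least one squared vertex $\e_i^2$ — each such face lies in exactly one facet $\e_i\U^1_q$ (which is a simplex on $q$ vertices, of which $\e_i^2$ is one and the other $q-1$ are the non-squared $\e_i\e_\ell$, $\ell\ne i$), and must use $\e_i^2$ together with $i$ of the remaining $q-1$ vertices, giving $\binom{q-1}{i}$ faces per $i\in[q]$, hence $q\binom{q-1}{i}$ in total; distinctness across different $i$ holds because the squared vertex determines $i$. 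Summing, $f_i(\UU^2_q)=\binom{\frac12(q^2-q)}{i+1}+q\binom{q-1}{i}$, as desired. The cases $q\le 2$ are handled directly: there $\UU^2_q=\langle \e_i\U^1_q\st i\in[q]\rangle$ with no $\U^2_q$ contribution, and one checks the formula still holds with the convention $\binom uv=0$ for $u<v$ (for $q=1$ the single facet $\{\e_1^2\}$ gives $\beta_0=1$ and all higher $\beta_i=0$).

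The step I expect to require the most care is the inclusion–exclusion bookkeeping: making sure that faces lying in the intersection of two facets are counted exactly once, and in particular verifying the claim that a face with a squared vertex is confined to a single facet and that two squared vertices are never coface. This is elementary but is where an off-by-one or a double-count would creep in; everything else is a direct appeal to \cref{L2-supports}, \cref{U=L}, and \cref{t:upperbound}(4).
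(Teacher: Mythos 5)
Your proposal is correct, and the counting is carried out carefully. Worth noting: the paper itself does not supply a proof of this theorem — it simply cites \cite[Theorem~4.1]{L2} for the bound and \cite[Proposition~7.9]{Lr} for the sharpness — so your argument is a genuinely self-contained derivation rather than a reconstruction of the paper's own proof. That said, it is very much in the spirit of how the paper proves the companion result \cref{upper bounds} for $q\le 4$: first establish (via \cref{L2-supports}/\cref{U=L}) that $\UU^2_q$ supports a minimal free resolution of $\E_q^2$, so that $\beta_i(\E_q^2)=f_i(\UU^2_q)$, then count faces of $\UU^2_q$ directly, then invoke \cref{t:upperbound}(4). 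Your key structural observation — that each facet $\e_i\U^1_q$ is the unique facet containing the squared vertex $\e_i^2$, that no face contains two distinct squared vertices, and that any face without a squared vertex is automatically a subset of $\U^2_q$ — is exactly what makes the count clean and avoids double-counting across the overlapping facets. The binomial-coefficient convention $\binom{u}{v}=0$ for $u<v$ (stated in the paper just before \cref{t:betti-1}) does cover the degenerate cases $q=1,2$ as you note, though your remark that there is ``no $\U^2_q$ contribution'' for $q=2$ is slightly imprecise: $\U^2_2=\{\e_1\e_2\}$ is still a vertex of $\UU^2_2$, but the term $\binom{1}{i+1}$ in the formula accounts for it correctly, so the arithmetic goes through. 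The net gain of your route is a paper-internal proof, at the mild cost of implicitly relying on \cref{L2-supports}, which is itself cited from \cite{Lr} without proof here.
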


While we succeeded in providing effective bounds on the betti numbers
of powers of square-free monomials in certain cases, as recorded
above, it remains to be established whether such bounds can be given
for all $r$ and $q$. In particular, this paper raises the question:

\begin{question}
\label{q1}
Is the minimal free resolution of $\Erq$ supported on a simplicial (or cellular) complex for all $q\ge 1$ and all $r\ge 1$? In particular, are the ideals $\Erq$ Scarf?
\end{question}

If this is true, then one would also want to explicitly describe the
faces of the simplicial (or cellular) complex in \cref{q1}, with the
hope that one can obtain explicit formulas for bounds on betti
numbers, as given in this section. While we were able to provide a
clear description of $\Srq$ when $q\le 4$ and also when $r\le 2$, such a
description seems to become a lot more complex for larger $q$ or
$r$. We believe one can show that the ideals $\Srq$ are Scarf at least
when $q=5$ or when $r=3$ as well, but the difficulty of understanding
the Scarf complex with our current techniques is increasing sharply
for large values of $q$ and $r$.

\subsubsection*{Acknowledgements} The research for this paper took
place in May 2022, during a two-week stay of the authors at the
Mathematisches Forschungsinstitut in Oberwolfach (MFO), as Oberwolfach
Research Fellows. We are grateful to MFO for providing us with a
wonderful environment and facilities to do concentrated work.

We are grateful to Susan Cooper and Susan Morey with whom we have an
on-going collaboration on products of simplicial complexes. Their
insights will have inevitably influenced us while writing the current
paper.
  
Sara Faridi's research is supported by the Natural Sciences and
Engineering Research Council of Canada (NSERC) Discovery Grant program
(\#2023-05929). Liana \c{S}ega was supported in part by a grant from
the Simons Foundation (\#354594). For Sandra Spiroff, this material is
based upon work supported by and while serving at the National Science
Foundation. Any opinion, findings, and conclusions or recommendations
expressed in this material are those of the authors and do not
necessarily reflect the views of the National Science Foundation.

The computations for this paper were made using the computer algebra
softeware Macaulay2~\cite{M2}.


\end{document}